\DeclareMathOperator{\dist}{dist}
\DeclareMathOperator{\broad}{broad}
\newcommand{\R}{\mathbb{R}}
\newcommand{\K}{R}
\newtheorem{theorem}{Theorem}[section]
\newtheorem{corollary}[theorem]{Corollary}
\newtheorem{proposition}[theorem]{Proposition}
\newtheorem{definition}[theorem]{Definition}
\newtheorem{conjecture}[theorem]{Conjecture}
\pgfplotsset{compat=1.18}
\numberwithin{equation}{section}
\DeclareFontFamily{U}{mathx}{\hyphenchar\font45}
\DeclareFontShape{U}{mathx}{m}{n}{
      <5> <6> <7> <8> <9> <10>
      <10.95> <12> <14.4> <17.28> <20.74> <24.88>
      mathx10
      }{}
\DeclareSymbolFont{mathx}{U}{mathx}{m}{n}
\DeclareMathAccent{\widecheck}{0}{mathx}{"71}
\DeclareMathAccent{\wideparen}{0}{mathx}{"75}
\title{Incidence bounds related to circular Furstenberg sets}
\author{John Green}
\address{Department of Mathematics \\
University of Pennsylvania \\
David Rittenhouse Lab \\
209 South 33rd Street \\
Philadelphia \\
PA 19104-6395 \\
USA}
\email{jdgreen@sas.upenn.edu}
\author[Terry Harris]{Terence L.~J.~Harris}
\address{Department of Mathematics\\ University of Wisconsin\\ 480
Lincoln Drive\\ Madison\\ WI\\ 53706\\ USA}
\email{terry.harris@wisc.edu}
\author{Yumeng Ou}
\address{Department of Mathematics\\ University of Pennsylvania\\
David Rittenhouse Lab\\
209 South 33rd Street\\
Philadelphia\\ 
PA 19104-6395\\ 
USA}
\email{yumengou@sas.upenn.edu}
\author{Kevin Ren}
\address{Department of Mathematics, Princeton University}
\email{kevinren@princeton.edu}
\author{Sarah Tammen}
\address{Department of Mathematics\\ University of Wisconsin\\ 480
Lincoln Drive\\ Madison\\ WI\\ 53706\\ USA}
\email{tammen2@wisc.edu}
\begin{document} 
\begin{abstract} We prove bounds on approximate incidences between families of circles and families of points in the plane. As a consequence, we prove a lower bound for the dimension of circular $(u,v)$-Furstenberg sets, which is new for large $u$ and $v$. 
\end{abstract}
\maketitle

\small
\setcounter{tocdepth}{2}
\tableofcontents
\normalsize

\section{Introduction}
Given $0 < u \leq 1$ and $0 \leq v \leq 3$, a set $F \subseteq \mathbb{R}^2$ is called a \emph{circular $(u,v)$-Furstenberg set} if there is $E \subseteq \mathbb{R}^2 \times (0, \infty)$ of Hausdorff dimension $\dim E \geq v$, such that $\dim\left( C(x,t) \cap F\right) \geq u$ for all $(x,t) \in E$. Here $C(x,t)$ is the circle in the plane centred at $x \in \mathbb{R}^2$ of radius $t>0$. We define \emph{sine wave Furstenberg sets} similarly by replacing $C(x,t)$ with $\Gamma_x \subseteq \mathbb{R}^2$, where $x = (x_1,x_2,x_3) \in \mathbb{R}^3$, and $\Gamma_x$ is the graph of the function 
\[ \theta \mapsto \left\langle (x_1,x_2,x_3), \frac{1}{\sqrt{2}} \left( \cos \theta, \sin \theta, 1 \right) \right \rangle = \frac{ x_1 \cos \theta + x_2 \sin \theta + x_3}{\sqrt{2}}, \] 
over the interval $[0, 2\pi]$. 
Our main result is the following theorem. \begin{theorem} \label{curvedfurstenberg} Let $0 < u \leq 1$ and $0 \leq v \leq 3$. If $F \subseteq \mathbb{R}^2$ is a  circular $(u,v)$-Furstenberg set,  or a sine wave $(u,v)$-Furstenberg set, then 
\begin{equation} \label{bound1} \dim F \geq \begin{cases} 4u+v-3 & 3u+v \leq 4 \\
u+1 & 3u+v > 4, \end{cases} \end{equation}
and
\begin{equation} \label{sine2} \dim F \geq \begin{cases} 2u + \frac{2v}{3} -1 & 3u + 2v \leq 6 \\
 u+1 & 3u+2v > 6, \end{cases} \end{equation}
 and (by \cite{liu} in the circular case)
  \begin{equation} \label{sine1} \dim F \geq u+ \frac{v}{3}. \end{equation}
\end{theorem}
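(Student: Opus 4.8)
The plan is to derive all three estimates from $\delta$-discretized incidence bounds between $\delta$-balls and $\delta$-neighbourhoods of circles (resp.\ sine waves), via the standard reduction of Furstenberg-type problems to incidence counting. Both families of curves are \emph{cinematic}: two distinct curves meet in at most two points, and -- after rescaling the radius parameter $t$ (resp.\ the amplitude) to a compact interval bounded away from $0$ and $\infty$ -- the curvature is bounded above and below. The same incidence machinery therefore applies to both settings, so it suffices to prove the incidence bounds in one of the two models; the circular case of \eqref{sine1} may in any event be quoted from \cite{liu}. I would also carry out at the outset the routine reduction of the Hausdorff statement to a single-scale one: it is enough to show that for every small $\delta$, any $\delta$-separated family of $\approx\delta^{-v}$ curves, each containing $\approx\delta^{-u}$ $\delta$-balls of a common set $P$, forces $|P|\gtrsim\delta^{-s+\varepsilon}$, with $s$ the relevant exponent in \eqref{bound1}, \eqref{sine2} or \eqref{sine1}.

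Pigeonholing on the dimension of $E$ and on the incidence multiplicities produces such a family $\mathcal{C}$ with $|\mathcal{C}|\approx\delta^{-v}$, together with sets $P_C\subseteq P:=F_\delta$, $|P_C|\approx\delta^{-u}$, so that the incidence count is $I:=\sum_{C\in\mathcal{C}}|P_C|\approx\delta^{-u-v}$. Each of \eqref{bound1}, \eqref{sine2}, \eqref{sine1} then follows by inserting into this identity a matching upper bound for $I$ in terms of $|P|$ and $\delta^{-v}$ -- one which, in the regimes where the Furstenberg hypothesis is nontrivial, must improve on the bare Szemer\'edi--Trotter count -- and solving for $|P|$. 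The three conclusions correspond to three such incidence inputs: \eqref{sine1} to the most elementary one; \eqref{bound1} and \eqref{sine2} to finer estimates that additionally exploit the higher-order curvature of the family and that each saturate at the plateau $\dim F=u+1$ once $\mathcal{C}$ is large enough -- the thresholds $3u+v=4$ and $3u+2v=6$ being exactly where the underlying trade-off balances.

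The substance is in the incidence estimates, which I would prove by a broad/narrow decomposition combined with induction on the scale. For $C\in\mathcal{C}$, partition its $\delta$-balls into a broad part -- balls near which the curves of $\mathcal{C}$ realise many $\delta$-separated tangent directions -- and a narrow complement. On the broad part one uses that the $\delta$-neighbourhoods of two curves meeting at angle $\gtrsim\rho$ overlap in a set of diameter $\lesssim\delta/\rho+\sqrt{\delta}$ (the non-vanishing curvature entering through the $\sqrt{\delta}$, familiar from the classical $L^2$ analysis of the circular maximal function); this feeds an $L^2$, or for the sharper bounds a higher-moment, counting argument that strictly improves on the straight-line count and is what permits $v$ as large as $3$. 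On the narrow part, the curves through a typical ball cluster near a single tangent direction, so after localising and rescaling one obtains a configuration to which the inductive hypothesis applies at a coarser scale, with loss only $\delta^{-\varepsilon}$. Combining the two contributions and optimising the parameters yields the incidence bounds, and hence Theorem~\ref{curvedfurstenberg}.

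The step I expect to be the main obstacle is establishing the broad estimate with the precise exponents appearing in \eqref{bound1} and \eqref{sine2}: one must quantify the curvature of the family at scale $\delta$ sharply, control how the tangent directions of the curves through a ball interact with the ``two distinct curves meet at most twice'' constraint, and close the induction on scales without any loss beyond $\delta^{-\varepsilon}$. Secondary difficulties are making the passage between the circular and sine wave settings lossless at the level of these exponents, rather than only up to rescalings that degrade the curvature constants, and handling the degenerate regimes near $u=1$ (circular Kakeya) and near the plateau thresholds, where the extremal configurations change character.
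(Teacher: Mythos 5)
Your reduction of the Furstenberg statement to a single-scale incidence bound is essentially the same as the paper's (Section~\ref{conversion}, following Orponen's argument), but the heart of the matter is the incidence estimate itself, and there your proposal has a genuine gap. You propose to obtain the broad/high contribution by an elementary tangency-counting argument: two curves meeting at angle $\gtrsim\rho$ have neighbourhoods overlapping in a set of diameter $\lesssim \delta/\rho+\sqrt{\delta}$, fed into an $L^2$ or ``higher-moment'' count. That is precisely the classical Wolff--Schlag tangency machinery, and it is what underlies the previously known bounds \eqref{circular best known} of \cite{liu,fasslerliuorponen} and Zahl's $u+\min\{u,v\}$; it does not produce the exponents in \eqref{bound1} or \eqref{sine2}. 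In the paper, the incidence bound behind \eqref{bound1} (Theorem~\ref{theoremincidencecircle}) requires, in the high-frequency case, the sharp $L^4$ local smoothing/square function inequality of Guth--Wang--Zhang for the cone (via the asymptotics of $\widehat{\sigma_t}$), and the bound behind \eqref{sine2} (Theorems~\ref{trilinearincidencecircle} and \ref{trilinearincidence}) requires the trilinear Fourier restriction theorem for the cone applied to \emph{broad} incidences in the sense of Guth's broad norms. Your sketch supplies no substitute for these inputs -- you yourself flag the broad estimate ``with the precise exponents'' as the main obstacle -- so the key step is missing rather than merely technical. Relatedly, the paper's decomposition is a Fourier high/low split (with the low part handled by thickening the discs by $\delta^{-\epsilon^2}$ and inducting on scale), not an angular broad/narrow split with parabolic rescaling; an angular narrow part for circles does not simply reduce to the same problem at a coarser scale, so even the inductive half of your scheme would need a different mechanism than the one you describe.

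Two smaller points. First, the single-scale statement you induct on must carry the non-concentration (Katz--Tao $(\delta,s)$-set) hypotheses on both the curve family and the point set, since the constants $K_{\alpha,\mathbb{T},\delta}$, $K_{\beta,P,\delta}$ enter the incidence bounds with positive exponents; ``$\delta$-separated with cardinality $\delta^{-v}$'' is not enough. Second, for \eqref{sine1} (and for \eqref{sine2}) the incidence bounds the paper proves are only for trilinear/broad incidences, and their applicability to Furstenberg sets uses the hypothesis $u>0$ to find three angularly separated portions of each curve carrying Hausdorff content (Proposition~\ref{orponenproposition2}); this step is absent from your outline and is not automatic from the unrestricted incidence count you set up.
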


Some of our methods also extend to cinematic $(u,v)$-Furstenberg sets (see Definition~\ref{cinematicdefn}).
\begin{theorem} \label{cinematicfurstenberg}
Let $0 < u \leq 1$ and $0 \leq v \leq 3$. If $F \subseteq \mathbb{R}^2$ is a  cinematic $(u,v)$-Furstenberg set, then 
    \begin{equation} \label{bound1'} \dim F \geq \begin{cases} 4u+v-3 & 3u+v \leq 4 \\
    u+1 & 3u+v > 4, \end{cases} \end{equation}
    and
    \begin{equation} \label{bound2} \dim F \geq \begin{cases} 2u+v - 2 & u+v \leq 3 \\
    u+1 & u+v > 3. \end{cases} \end{equation}
\end{theorem}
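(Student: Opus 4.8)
The plan is to derive both inequalities of Theorem~\ref{cinematicfurstenberg} from $\delta$-discretised incidence bounds between $\delta$-balls and $\delta$-neighbourhoods of cinematic curves, in exactly the way that \eqref{bound1}--\eqref{sine1} of Theorem~\ref{curvedfurstenberg} are obtained, keeping only those features of the incidence estimates that are insensitive to replacing circles and sine waves by a general cinematic family. The first step is the standard reduction: if $\dim F < s$, where $s$ is the right-hand side of \eqref{bound1'} or of \eqref{bound2}, then for a sequence $\delta \to 0$ one can cover $F$ by $N \lesssim \delta^{-s+\epsilon}$ balls of radius $\delta$; taking a Frostman measure of exponent close to $v$ on the parameter set $E$ from Definition~\ref{cinematicdefn} and carrying out a single-scale pigeonholing (uniformisation), one extracts $\gtrsim_\epsilon \delta^{-v}$ cinematic curves forming a $(\delta,v)$-set, each meeting $\gtrsim_\epsilon \delta^{-u}$ of the chosen $\delta$-balls (which may moreover be taken to be a $(\delta,u)$-set on each curve). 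This produces $\gtrsim_\epsilon \delta^{-(u+v)}$ approximate incidences, and the whole problem reduces to showing that the incidence bound available for cinematic curves forces $N \gtrsim_\epsilon \delta^{-s+O(\epsilon)}$, contradicting $N \lesssim \delta^{-s+\epsilon}$ for $\epsilon$ small. As the presence of the sine wave curves $\theta\mapsto\langle x,\tfrac1{\sqrt2}(\cos\theta,\sin\theta,1)\rangle$ suggests, one may also phrase this discretised incidence problem dually, with the roles of $F$ and $E$ exchanged, as a restricted-projection-type estimate in $\R^3$ in which the cinematic case corresponds to a general nondegenerate curve; either formulation is workable.

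For \eqref{bound1'} I would simply rerun the proof of \eqref{bound1}. The point is that the incidence estimate behind \eqref{bound1} uses circles and sine waves only through three qualitative inputs: bounded intersection multiplicity of two curves, a Wolff-type non-concentration (``two-ends''/Wolff axiom) property of the family, and the associated broad incidence bound. After decomposing the domain of the cinematic parametrisation into finitely many pieces on which the relevant Jacobians and curvatures are comparable to absolute constants, and rescaling, Definition~\ref{cinematicdefn} supplies precisely these three inputs; hence the broad--narrow decomposition of the curve family goes through verbatim, the broad part estimated by the cinematic incidence bound and the narrow part absorbed into the $(\delta,v)$-set hypothesis, and one recovers $N \gtrsim_\epsilon \delta^{-(4u+v-3)+O(\epsilon)}$ when $3u+v\le 4$ and $N \gtrsim_\epsilon \delta^{-(u+1)+O(\epsilon)}$ otherwise.

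For \eqref{bound2}, which is weaker than \eqref{sine2} precisely because a general cinematic family lacks the special tangency structure of circles that drives the sharper bound, I would use a cruder but more robust incidence count. Since two distinct curves in a cinematic family meet in $O(1)$ points, the $\delta$-neighbourhoods of two $\delta$-separated curves share $O(1)$ of the covering balls away from a tangential portion whose contribution is controlled by the cinematic hypothesis; double counting triples (ball, curve, curve) and applying Cauchy--Schwarz then bounds the incidence count by an expression in $N$, $\delta^{-v}$ and $\delta$, and feeding in $\gtrsim\delta^{-(u+v)}$ incidences and optimising over the terms yields $N \gtrsim_\epsilon \delta^{-(2u+v-2)+O(\epsilon)}$ in the range $u+v\le 3$. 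In the complementary range the same bookkeeping, together with a Wolff-type bound for the associated maximal operator (which gives $\dim F\ge u+1$ once the curve family is essentially a full three-parameter family), gives $N\gtrsim_\epsilon\delta^{-(u+1)+O(\epsilon)}$. The main obstacle throughout is quantitative: the cinematic-curvature condition is a priori only qualitative, so one must first localise and rescale to make all constants uniform, and then check that the tangency analysis for \eqref{bound2} and the broad incidence bound for \eqref{bound1'} survive with these uniform constants. For a general cinematic family one does not have the explicit algebraic description of tangencies available for circles and sine waves, so the tangential terms must be handled by soft curvature arguments in place of direct computation; making this robust, while not losing powers of $\delta$ beyond those already reflected in the weaker exponent $2u+v-2$, is the crux.
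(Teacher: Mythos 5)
Your first step (the reduction of the Hausdorff-dimension statement to a single-scale discretised incidence bound via Frostman/content pigeonholing) matches the paper's conversion mechanism (Proposition~\ref{orponenproposition}), and that part is fine. The problem is with the incidence bounds themselves. For \eqref{bound1'} you assert that the proof of \eqref{bound1} only uses ``bounded intersection multiplicity, a Wolff-type non-concentration axiom, and the associated broad incidence bound,'' and that Definition~\ref{cinematicdefn} supplies these after localisation and rescaling. That is not what drives \eqref{bound1}: the paper's proof is a high--low frequency decomposition whose high part is estimated by the sharp $L^4$ local smoothing inequality of Guth--Wang--Zhang for the wave equation, and the cinematic generalisation \eqref{bound1'} requires replacing this by the variable-coefficient local smoothing inequality for Fourier integral operators satisfying the cinematic curvature condition (Gao--Liu--Miao--Xi), applied to the averaging operators $f\mapsto\int f\,d\sigma_{x,t}$. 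This is a deep quantitative input; it does not follow from bounded multiplicity, Wolff axioms, or ``soft curvature arguments after making constants uniform.'' (The broad--narrow/trilinear machinery you invoke is what the paper uses for \eqref{sine2}, and the paper explicitly leaves open whether that route extends to cinematic curves.) So as written, your argument for \eqref{bound1'} has no substitute for the key analytic theorem.

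For \eqref{bound2} the gap is of the same nature. The paper proves the relevant incidence bound by the same high--low scheme, with the high part controlled by the fixed-time $L^2$ bound for Fourier integral operators of order $-1/2$ (the curvature-driven $|\xi|^{-1/2}$ decay), which quantitatively absorbs exactly the near-tangential interactions. Your proposed replacement --- two curves share $O(1)$ covering balls ``away from a tangential portion,'' then double count triples and apply Cauchy--Schwarz --- founders on precisely that tangential portion: for $\delta$-neighbourhoods of curves in a cinematic family, nearly tangent pairs can share $\sim\delta^{-1/2}$ balls, and bounding how often this happens is the genuinely hard tangency-counting problem (this is the content of Wolff's and Zahl's work, not something the cinematic hypothesis hands you softly). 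You acknowledge this yourself by calling it ``the crux,'' but that means the step carrying all the difficulty is missing; without it the Cauchy--Schwarz bookkeeping does not yield the exponent $1+\frac{\alpha+\beta}{2}$ (equivalently $2u+v-2$), nor does the appeal to an unproved Wolff-type maximal bound settle the range $u+v>3$, which in the paper follows from the second branch of the same incidence estimate rather than from a separate maximal function input.
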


Previously, the best known bounds for circular Furstenberg sets due to ~\cite{liu},~\cite{fasslerliuorponen} are:
\begin{equation}\label{circular best known}
    \dim F \ge u + \max\left\{ \frac{v}{3}, \min \{ u, v \} \right\}.
\end{equation}
A comparison of Theorem~\ref{curvedfurstenberg} with previous results is shown in Figure~\ref{curvedfurstenbergpicture}.
The only known bound for cinematic Furstenberg sets (including sine Furstenberg sets) is due to Zahl~\cite{zahl}, who proved that $\dim F \ge u + \min \{ u, v \}$ for $0 < u \le 1$, $0 \le v \le 3$, which is sharp. (In fact, Zahl's proof holds if the family of cinematic curves is replaced by any family of smooth curves in the plane forbidding $k$-th order tangencies for some fixed (but arbitrarily large) $k$.)


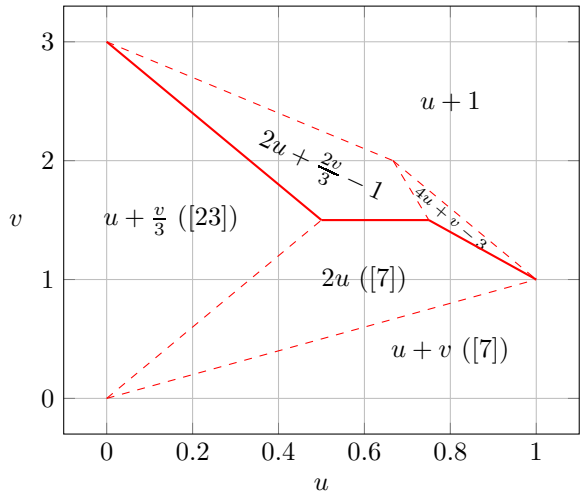
\begin{figure} \label{curvedfurstenbergpicture}
\begin{tikzpicture}[
declare function={
	  func(\x)= (0<=\x<=1) * (1.5);
    func2(\x)= (0.3333333<=\x<=1) * (4-3*\x);
    func3(\x)= (0<=\x<=1) * (\x);
    func4(\x)= (0<=\x<=1) * (3*\x);
    func5(\x)= (0<=\x<=1) * (3-2*\x);
    func6(\x)= (0<=\x<=1) * (3-1.5*\x);
    func7(\x)= (0<=\x<=1) * (3-3*\x);
    func8(\x)= (0<=\x<=1) * (6-6*\x);
  }]
  \begin{axis}[ 
	grid=major,
    xlabel=$u$,
    ylabel={$v$},
    ylabel style={rotate=-90},
		legend style={
legend pos=outer north east,
}
  ] 
\coordinate (A) at (axis cs:0.8,2.5);
\node at (A) {$u+1$};
\coordinate (A) at (axis cs:0.80,1.51);
\node[rotate=-40] at (A) {\tiny $4u+v-3$};
\coordinate (C) at (axis cs:0.15,1.5);
\node at (C) {$u+\frac{v}{3}$ (\cite{liu})};
\coordinate (D) at (axis cs:0.8,0.4);
\node at (D) {$u+v$ (\cite{fasslerliuorponen})};
\coordinate (E) at (axis cs:0.6,1);
\node at (E) {$2u$ (\cite{fasslerliuorponen})};
\coordinate (F) at (axis cs:0.5,1.95);
\node[rotate=-25] at (F) {$2u+\frac{2v}{3}-1$};
\addplot[red,thick,domain=0.5:0.75]{func(x)}; 
\addplot[red,dashed,domain=0.6666666:1]{func2(x)}; 
\addplot[red,dashed,domain=0:1]{func3(x)}; 
\addplot[red,dashed,domain=0:0.5]{func4(x)}; 
\addplot[red,thick,domain=0.75:1]{func5(x)}; 
\addplot[red,dashed,domain=0:0.6666666]{func6(x)}; 
\addplot[red,thick,domain=0:0.5]{func7(x)}; 
\addplot[red,dashed,domain=0.6666666:0.75]{func8(x)}; 
  \end{axis}
\end{tikzpicture} \caption{The current best known lower bounds for circular Furstenberg sets, including our results in Theorem~\ref{curvedfurstenberg}. The known bounds for sine curves are the same, but the lower bound $\min\{u+v,2u\}$ is from \cite{zahl} in this case, and $u+\frac{v}{3}$ is Theorem~\ref{curvedfurstenberg}. } \end{figure}

In~\cite{wolff}, Wolff proved\footnote{Wolff proved the special case where the planar set contains a circle centred at $x$ for all $x$ in a $v$-dimensional subset of $\mathbb{R}^2$, but as explained in \cite{fasslerliuorponen}, the methods work for general families of circles.} the special case $\dim F \geq 1+v$ of \eqref{circular best known} when $u=1$ and $0 \leq v \leq 1$. Setting $u=1$ in Theorem~\ref{curvedfurstenberg} gives the lower bound of $1+v$ for all $0\leq v \leq 1$, so Theorem~\ref{curvedfurstenberg} is a different generalisation of Wolff's result.  

The circular Furstenberg problem is a variant of the linear Furstenberg problem in the plane, which was solved in~\cite{orponenshmerkin},~\cite{renwang}. In the more general setting of ``families of curves forbidding $k$-th order tangency'', considered in \cite{zahl}, the linear Furstenberg problem is an example where $k=1$, and the circular and sine wave Furstenberg problems are examples where $k=2$. 

\subsection{Description of the approach} Our approach to Theorem~\ref{curvedfurstenberg} is via bounds on incidences between  families of circles and families of discs in the plane satisfying certain dimension conditions, and can be viewed as a curved version of the Fourier analytic approach in \cite{furen}. Most of our incidence bounds use the ``high-low method'', which originated in \cite{GSW}, which sorts the incidences into different ``frequency buckets''. In the standard high-low argument, the incidences are represented by an integral $\int fg$ over Euclidean space, and either $f$ or $g$ (equivalently both) is broken up into parts where their Fourier transforms are supported near the origin (low) or away from the origin (high). In the circular case, we represent incidences by integrals of the form $\int f (g \ast \sigma_t)(x) \, dx \, dt$, where $\sigma_t$ is the probability measure on the circle around 0 of radius $t$, and decompose $g$ into high and low frequencies. This is related to \cite[Corollary~3]{wolff}, where integrals of the form $\int_A (\chi_U \ast \sigma_t)$ are analysed via a Littlewood-Paley decomposition of $\chi_U$.

The low frequency incidences are always handled by induction. In the high case, the proof of \eqref{bound1} uses the sharp local smoothing inequality of Guth, Wang, and Zhang from \cite{guthwangzhang}. The proof of \eqref{bound2} is similar, but uses a simpler $L^2$ bound in the high case in place of the local smoothing inequality.  

The proof of \eqref{sine2} also uses the high-low method, but in the high case it uses the trilinear Fourier restriction theorem in $\mathbb{R}^3$ specialised to the cone (this special case of trilinear restriction is discussed in detail in \cite{leevargas}). For the trilinear approach, rather than bounding incidences between families of circles and balls, we only bound the ``broad'' incidences, which roughly speaking rule out incidences concentrated in small angular sectors of the circles. Although incidence bounds of this kind are weaker than standard incidence bounds, they are equally applicable to circular Furstenberg sets, since the Furstenberg property ensures that the points on each circle do not all concentrate in a finite number of small angular sectors. This seems to be the first application of multilinear Fourier restriction to Furstenberg type problems, though the argument has some similarities to \cite{duzhang}. Our definition of ``broad incidences'' is based on the broad norms defined in \cite{guth2}. The proof of \eqref{sine1} for sine waves also uses ``broad'' (or trilinear) incidences, but instead of the high-low method it uses a simpler geometric approach which avoids Fourier analysis.

It may be possible to generalise the trilinear Fourier restriction approach to cinematic $(u,v)$-Furstenberg sets, using e.g.~variable coefficient trilinear Fourier restriction \cite[Theorem~1.16]{tao}, but we have not attempted to carry this out, so we leave this as an open question whether such a generalisation is possible. 

\subsection{Connection to restricted families of projections} It is well known that by point-line duality (see e.g.~\cite{renwang,orponenshmerkin}), the linear Furstenberg problem is related to exceptional sets for orthogonal projections in the plane. Similarly, the duality from \cite{kaenmakiorponenvenieri} between Kakeya problems of planar sine waves and restricted projections to lines in $\mathbb{R}^3$, implies that the sine wave Furstenberg problem is related to exceptional sets of restricted projections to lines in $\mathbb{R}^3$. We will briefly explain this duality here. For each $\theta \in [0, 2\pi)$, let $\rho_{\theta}$ be orthogonal projection onto the span of $\left( \cos \theta, \sin \theta, 1 \right)$. We identify a point $z \in \mathbb{R}^3$ with the graph of a sine wave $\theta \mapsto \Gamma_z(\theta) = \frac{z_1 \cos \theta + z_2 \sin \theta + z_3}{\sqrt{2}}$ over the interval $[0, 2\pi]$. If $A \subseteq \mathbb{R}^3$ has $\dim A =v$, if $s \leq \min\{1,v\}$ and $\dim \Theta = u$, where
\[ \Theta = \left\{ \theta \in [0, 2\pi]: \dim(\rho_{\theta}(A)) < s \right\}, \]
then 
\begin{equation} \label{furstenbergsine} F := \left\{ (\theta, \Gamma_z(\theta) ) :  \theta \in \Theta, z \in A \right\} \end{equation}
is a sine wave $(u,v)$-Furstenberg set. Each $x$-section at $x=\theta$ of $F$ is $\rho_{\theta}(A)$, which has dimension $< s$ whenever $\theta \in \Theta$, so a heuristic application of Fubini's theorem leads us to expect that $\dim F \le u + s$. Using the lower bounds for $\dim F$ provided by Theorem \ref{curvedfurstenberg} and \eqref{circular best known}, we deduce upper bounds on the dimension $u$ of $\Theta$. For example, a corollary of \eqref{sine1} (in the sine wave case) is the following.
\begin{proposition} \label{projection} If $A \subseteq \mathbb{R}^3$ is a Borel set, then 
\begin{equation} \label{zerodimensions} \dim\left\{ \theta \in [0, 2\pi) : \dim \rho_{\theta}(A) < \frac{\dim A}{3} \right\} = 0. \end{equation} \end{proposition}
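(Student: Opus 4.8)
The plan is to deduce Proposition~\ref{projection} from the sine wave case of \eqref{sine1}, namely $\dim F \ge u + v/3$, by running the duality in \eqref{furstenbergsine} contrapositively. Suppose, toward a contradiction, that there is a Borel set $A \subseteq \mathbb{R}^3$ for which
\[
\Theta = \left\{ \theta \in [0, 2\pi) : \dim \rho_\theta(A) < \tfrac{\dim A}{3} \right\}
\]
has $\dim \Theta = u > 0$. Write $v = \dim A$; we may assume $v > 0$, since otherwise $\dim\rho_\theta(A) = 0$ and there is nothing to prove. Pick $s$ with $\dim \rho_\theta(A) < s < \min\{1, v\}$ simultaneously achievable after a small reduction (see below), so that by \eqref{furstenbergsine} the set $F = \{ (\theta, \Gamma_z(\theta)) : \theta \in \Theta,\ z \in A \}$ is a sine wave $(u, v)$-Furstenberg set. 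Then \eqref{sine1} gives $\dim F \ge u + v/3$. On the other hand, the heuristic Fubini bound $\dim F \le u + s$ should be made rigorous; choosing $s$ just above $\sup_{\theta \in \Theta} \dim\rho_\theta(A)$, which is $\le v/3$, this forces $u + v/3 \le \dim F \le u + s$, and letting $s \downarrow v/3$ we would need strict inequality somewhere to reach a contradiction — so the argument must instead be arranged quantitatively, peeling off a definite gap.

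The cleaner route, which I would actually carry out, is to fix a small $\varepsilon > 0$ and restrict attention to $\Theta_\varepsilon = \{ \theta : \dim \rho_\theta(A) < v/3 - \varepsilon \}$ (if $\dim \Theta > 0$ then $\dim \Theta_\varepsilon > 0$ for some $\varepsilon > 0$ by countable stability of Hausdorff dimension, since $\Theta = \bigcup_n \{\theta : \dim\rho_\theta(A) < v/3 - 1/n\}$). Set $u' = \dim \Theta_\varepsilon > 0$ and $s = v/3 - \varepsilon$, shrinking $s$ further if necessary so that $s < \min\{1, v\}$ — note if $v \ge 1$ then $v/3 < 1$ automatically, and if $v < 1$ then $v/3 < v$, so only the case needing care is purely cosmetic. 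The associated $F$ from \eqref{furstenbergsine} is then a sine wave $(u', v)$-Furstenberg set (each section being a $\rho_\theta(A)$ of dimension $< s$ is more than enough for the defining property, which only requires sections of dimension $\ge$ something — actually here the Furstenberg set is built from $A$ having $\dim A \ge v$, so I must double-check the direction: $F$ is Furstenberg because for $(x,t) \in E \subseteq A \times \ldots$, the curve meets $F$ in a copy of a positive-dimensional set; the precise bookkeeping is in the paragraph preceding the proposition). So \eqref{sine1} yields $\dim F \ge u' + v/3$. The contradiction comes from the upper bound: I claim $\dim F \le u' + s = u' + v/3 - \varepsilon$.

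The main obstacle is making the Fubini-type upper bound $\dim F \le u' + s$ rigorous, since Hausdorff dimension does not obey Fubini in general. The standard fix is to use the Fubini inequality for Hausdorff measures on product-like sets, or better, to observe that $F$ is contained in $\bigcup_{\theta \in \Theta_\varepsilon} \{\theta\} \times \rho_\theta(A)$ and to pass to a box-counting or net-measure estimate: cover $\Theta_\varepsilon$ by $\sim \delta^{-u' - \varepsilon}$ intervals of length $\delta$, and over each such interval the slice $\rho_\theta(A)$ is covered by $\sim \delta^{-s - \varepsilon}$ intervals of length $\delta$ (uniformly, after passing to a further subset of positive dimension on which the covering number is controlled — this uniformization is where one invokes that $\dim \rho_\theta(A) < s$ for all $\theta$ in the set, together with a pigeonholing over dyadic scales), giving a cover of $F$ by $\lesssim \delta^{-u' - s - 2\varepsilon}$ boxes of size $\delta$, hence $\dim F \le u' + s + 2\varepsilon$. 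Combined with $\dim F \ge u' + v/3 = u' + s + \varepsilon$ this is consistent, not contradictory — so I would instead need a cleaner version: replace $2\varepsilon$ by $o(1)$ by being careful, or equivalently note that the upper bound argument genuinely gives $\dim F \le u' + s$ (the $\varepsilon$-losses can be absorbed by taking $s$ itself slightly larger than $\sup_\theta \dim\rho_\theta(A)$, which is still $< v/3$). Then $u' + v/3 \le \dim F \le u' + s < u' + v/3$, a contradiction, forcing $\dim \Theta_\varepsilon = 0$ for every $\varepsilon > 0$, and hence $\dim \Theta = \sup_n \dim \Theta_{1/n} = 0$. I expect the bulk of the write-up to be the uniformization/pigeonholing needed for the covering-number upper bound to hold simultaneously for all relevant $\theta$, and a citation to \cite{kaenmakiorponenvenieri} for the precise form of the duality \eqref{furstenbergsine}; everything else is a short deduction.
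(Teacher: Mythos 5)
Your overall route --- contradiction via the duality \eqref{furstenbergsine} combined with the lower bound \eqref{sine1}, after first passing to $\Theta_\varepsilon=\{\theta:\dim\rho_\theta(A)<v/3-\varepsilon\}$ --- is exactly the argument sketched in the introduction of the paper. The gap is in the step you yourself flag as "the main obstacle": the upper bound $\dim F\le u'+s$. This inequality is \emph{not} available by the covering argument you describe, and indeed it is false for general sets with small projections and small sections: the graph of a Weierstrass-type function has Hausdorff dimension strictly between $1$ and $2$, projects onto an interval, and has singleton vertical sections, so $\dim F\le\dim\Theta+\sup_\theta\dim F_\theta$ fails. Concretely, two things break in your box-counting scheme. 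First, $\dim\rho_\theta(A)<s$ only yields, for each $\theta$, covers by intervals of \emph{varying} diameters with small $s$-dimensional content; it does not give a covering number $\lesssim\delta^{-s}$ at every single scale $\delta$ (that would be an upper box-dimension hypothesis). Second, even after pigeonholing a good dyadic scale for each $\theta$, that scale depends on $\theta$, so there is no single $\delta$ at which all sections are simultaneously efficiently covered; and a cover of $F$ by $\delta$-boxes built from a Hausdorff cover of $\Theta_\varepsilon$ by $\delta^{-u'-\varepsilon}$ intervals controls box-type content, not the Hausdorff content of $F$, so it cannot produce $\dim F\le u'+s$.

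This is precisely why the paper states that the Fubini argument is only heuristic and that the rigorous deduction must go through a \emph{discretised} version of Theorem~\ref{curvedfurstenberg}: one fixes a scale $\delta$, pigeonholes a positive-dimension's worth of $\theta$ for which $\rho_\theta(A)$ is covered by $\lesssim\delta^{-s}$ intervals at that particular scale, and contradicts the $\delta$-discretised incidence bound directly rather than the continuous statement \eqref{sine1}; the paper then refers to \cite{orponenshmerkin} for the details of this discretisation-and-return argument and to \cite[Corollary~5]{he} for an independent proof of the proposition. To repair your write-up you would need to replace the final chain $u'+v/3\le\dim F\le u'+s$ by this scale-by-scale argument against the discretised incidence estimate; as written, the contradiction rests on an inequality that does not hold.
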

Proposition~\ref{projection} is known, as it follows from the deep result of He~\cite[Corollary~5]{he}. In fact, He's result shows that ``$< \dim A/3$'' can be replaced by the weaker condition ``$\leq \dim A/3$'', but we will explain why Proposition~\ref{projection} is essentially a special case of \eqref{sine1}.  Let $v = \dim A$ and suppose for a contradiction that the exceptional set in \eqref{zerodimensions} has positive dimension. Then, for some $\epsilon>0$, the set \[ \left\{ \theta \in [0, 2\pi) : \dim \rho_{\theta}(A) < \frac{v}{3}-\epsilon \right\} \]
has positive dimension $u>0$. Then, the sine wave $(u,v)$-Furstenberg set $F$ in \eqref{furstenbergsine}) will have dimension $\dim F \le u + \frac{v}{3}-\epsilon$. But this contradicts \eqref{sine1}, which says that  $\dim F \ge u+\frac{v}{3}$. 

Strictly speaking, the above argument is not rigorous, since Fubini's theorem does not hold for Hausdorff dimension. However, our results also imply a discretised version of Theorem~\ref{curvedfurstenberg}, which (by following the argument above) implies a discretised version of Proposition~\ref{projection}, and the discretised version implies the continuous version. Since this kind of argument is already included in \cite{orponenshmerkin} in the linear case, and since Proposition~\ref{projection} already follows from \cite[Corollary~5]{he}, we will not include further details. From now on, we will only use the connection to projections as a heuristic; it will not be used in proofs. 

In Section~\ref{projectionsection} we show that Proposition~\ref{projection} is sharp in the sense that there are examples where the set of exceptions has positive dimension if $\dim A/3$ is replaced by any larger number. Proposition~\ref{projection} seems to be the analogue of D.~Oberlin's \cite[Theorem~1.2]{oberlin}, which states that if $A \subseteq \mathbb{R}^2$ is a Borel set with $\dim A \leq 1$, then the set of directions onto which the projection of $A$ has dimension strictly smaller than $\dim A/2$ has dimension zero. Interpolating between this and Kaufman's exceptional set estimate \cite{kaufman}, Oberlin then conjectured that if $A$ is a set in the plane of dimension $t$, and $s \leq \min\{ t, 1\}$, then the set of directions onto which the projection of $A$ has dimension strictly smaller than $s$, has Hausdorff dimension at most $\max\{2s-t,0\}$ (this was proven in \cite{renwang}). Following this approach, we formulate an analogue of Oberlin's conjecture in the restricted projection setting of $\R^3$.
\begin{conjecture} \label{oberlinR3} Let $t \in [0,3]$ and let $s \in \left[0, \min\left\{ t, 1 \right\} \right]$. If $A \subseteq \mathbb{R}^3$ is a Borel set with $\dim A =t$, then 
\[ \dim\left\{ \theta \in [0, 2\pi) : \dim \rho_{\theta}(A) < s \right\} \leq \max\left\{\frac{3s}{2} - \frac{t}{2},0 \right\}. \] \end{conjecture}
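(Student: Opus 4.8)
The plan is to reduce Conjecture~\ref{oberlinR3} to a sharp incidence estimate for sine waves via the duality described above, following the template by which Ren and Wang deduced Oberlin's planar conjecture from the sharp linear Furstenberg set estimate. First I would pass to a $\delta$-discretized formulation: as in \cite{orponenshmerkin}, and as indicated in this paper for Proposition~\ref{projection}, the continuous statement follows from its $\delta$-discretized version, so it suffices to rule out the simultaneous existence of a $(\delta,v)$-set $A \subseteq \mathbb{R}^3$ and a $(\delta,u)$-set $\Theta \subseteq [0,2\pi)$ with $u$ strictly larger than $\max\{\tfrac{3s-v}{2},0\}$ such that the $\delta$-covering number of $\rho_\theta(A)$ is at most $\delta^{-s}$ for every $\theta \in \Theta$. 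Dualizing as in \eqref{furstenbergsine}, the set $F$ becomes a $\delta$-discretized sine wave $(u,v)$-Furstenberg set with $|F|_\delta \lesssim \delta^{-(u+s)}$, and $u > \tfrac{3s-v}{2}$ rearranges to $u+s < \tfrac{5u+v}{3}$; so on the part of parameter space where $\tfrac{5u+v}{3} \le u+1$ the conjecture would follow from a sharp sine wave Furstenberg bound $\dim F \ge \tfrac{5u+v}{3}$, a bound strictly stronger than Theorem~\ref{curvedfurstenberg} and necessarily close to tight because of the examples in Section~\ref{projectionsection}. (In the complementary regime $2u+v>3$ the crude bound $\dim F \le u+s$ is no longer enough, and a separate argument in the spirit of a ``Falconer-type'' exceptional set estimate for the $\rho_\theta$ would be needed; I expect the cleanest treatment is to handle all regimes by a single induction on scales, as in the plane.)

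Second, to prove the required estimate I would run the high-low / induction-on-scales scheme of this paper, but augment it with a ``projection increment'' step in the spirit of Bourgain's projection theorem. The $\delta$-discretized Kaufman-type bound for the restricted projections $\rho_\theta$ — the endpoint $s=v/3$, which is the discretization of Proposition~\ref{projection} — would be the base of the induction. The inductive step should show that if the exceptional set of directions has dimension exceeding $\tfrac{3s-v}{2}$, then the projections $\rho_\theta(A)$, $\theta\in\Theta$, cannot all be as concentrated as the hypothesis demands; this forces either the covering number of $A$ to be strictly larger at some coarser scale, or a quantifiable amount of branching in the sine wave--point incidence structure, and either way one gains a small exponent and iterates up to the sharp bound. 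This is exactly the role played in the plane by the Orponen--Shmerkin inverse theorem combined with the discretized sum--product theorem, and by the further refinements of Ren--Wang.

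The main obstacle — and the reason the conjecture is open — is that the sine wave, equivalently light cone, setting does not carry the multiplicative sum--product structure that drives the planar argument. The Fourier-analytic inputs available for curves forbidding second-order tangency, namely the sharp local smoothing estimate of Guth--Wang--Zhang used for \eqref{bound1} and trilinear cone restriction used for \eqref{sine2}, produce the bounds of Theorem~\ref{curvedfurstenberg} but fall short of the exponent $\tfrac{5u+v}{3}$; the gap is visible in Figure~\ref{curvedfurstenbergpicture}. Closing it seems to require either a genuinely new ``curved inverse theorem'' exploiting finer structure of the sine wave family (beyond the $k=2$ tangency condition used in \cite{zahl}) in place of sum--product, or a strengthening of the multilinear restriction and decoupling inputs beyond what is currently known. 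Until such an ingredient is available, the best one can extract towards Conjecture~\ref{oberlinR3} from the methods of this paper is the exceptional set bound obtained by inserting Theorem~\ref{curvedfurstenberg} into the duality, which improves on Proposition~\ref{projection} for $s$ slightly above $v/3$ but is far from sharp.
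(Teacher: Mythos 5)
The statement you were asked about is not proved in the paper at all: it is posed as Conjecture~\ref{oberlinR3}, motivated only by a heuristic (the duality \eqref{furstenbergsine} with sine wave Furstenberg sets, the observation that $\frac{5u+v}{3}\le u+s$ is equivalent to $u\le\frac{3s}{2}-\frac{t}{2}$, and interpolation between the known endpoints $s=t/3$ and $s=t$). So there is no proof in the paper to compare yours against, and your submission is, by its own admission, not a proof either: it is a program whose decisive step --- the sharp sine wave Furstenberg bound $\dim F\ge\frac{5u+v}{3}$, i.e.\ Conjecture~\ref{furstenbergconjecture}, or some ``curved inverse theorem'' replacing the sum--product/Orponen--Shmerkin--Ren--Wang machinery of the linear case --- is exactly the open problem. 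You identify that gap correctly and honestly, and the reduction you sketch (discretize as in \cite{orponenshmerkin}, dualize via \eqref{furstenbergsine}, note that $u>\frac{3s-v}{2}$ forces $u+s<\frac{5u+v}{3}$, contradict a sharp Furstenberg lower bound) is the same heuristic the authors themselves describe when motivating the conjecture; it is a sensible framing, but it does not advance beyond what the paper already states.

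Two smaller caveats on the reduction itself, in case you pursue it. First, as you note, the Fubini-type inequality $\dim F\le u+s$ is not literally valid for Hausdorff dimension, so the whole argument must be run at the level of $\delta$-discretized covering numbers (the paper makes the same disclaimer for Proposition~\ref{projection}); this is routine but not free, and your write-up should not treat the continuous implication as automatic. Second, in the regime where the minimum in \eqref{minversion} is $u+1$ (i.e.\ $2u+v>3$), the contradiction $u+s<u+1$ only works when $s<1$; the endpoint $s=1$, $t\ge 1$ is genuinely different and is precisely the Gan--Guth--Maldague theorem \cite{ganguthmaldague}, so ``a separate argument'' there is not optional but already known and should simply be cited. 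With those provisos, your assessment of the state of the problem matches the paper's: the best unconditional information currently extractable is what one gets by feeding Theorem~\ref{curvedfurstenberg} (and \eqref{circular best known}) into the duality, which falls short of the conjectured exponent $\frac{3s}{2}-\frac{t}{2}$.
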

If $t \leq 1$ and $s=t$ then Conjecture~\ref{oberlinR3} reduces to a theorem of Pramanik, Yang, and Zahl (\cite[Theorem~1.3]{pramanikyangzahl}), which is an $\mathbb{R}^3$ restricted projection analogue of Kaufman's exceptional set estimate in the plane, and which is known to be sharp. If $s = t/3$ then Conjecture~\ref{oberlinR3} reduces to Proposition~\ref{projection}, so Conjecture~\ref{oberlinR3} interpolates between the cases $s=t/3$ and $s=t$. The case $t\geq 1$ and $s=1$ of Conjecture~\ref{oberlinR3} was solved by Gan, Guth, and Maldague \cite{ganguthmaldague}. We can also compare the case $s =t/3 + \epsilon$ with He~\cite[Corollary~5]{he}, which can be viewed as a ``restricted projections'' generalisation of Bourgain's discretised projection theorem. Following the heuristic outlined at the start of this subsection, a Furstenberg ``generalisation'' of Conjecture~\ref{oberlinR3} is the following. 
\begin{conjecture} \label{furstenbergconjecture} Let $0 < u \leq 1$ and $0 \leq v \leq 3$. If $F \subseteq \mathbb{R}^2$ is a sine wave $(u,v)$-Furstenberg set, or a circular $(u,v)$-Furstenberg set, then 
\begin{equation} \label{casesversion} \dim F \geq \begin{cases}  u+v  & 0 \leq v \leq u \\
\frac{5u+v}{3} & 0 \leq u < v \text{ and } 2u +v \leq 3  \\
u+1 & 2u + v  > 3, \end{cases} \end{equation}
or equivalently 
\begin{equation} \label{minversion} \dim F \geq \min\left\{ u+v,\frac{5u+v}{3}, u+1 \right\}.  \end{equation} \end{conjecture}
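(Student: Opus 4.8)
\emph{A possible approach.} The plan is to reduce Conjecture~\ref{furstenbergconjecture} to a sharp $\delta$-discretised incidence estimate between $\delta$-tubes around circles (or sine waves) and $\delta$-discs, and then to prove that estimate by the high--low method of the present paper run inside an induction on scales, upgraded with a broad--narrow (two-ends) analysis modelled on the Orponen--Shmerkin and Ren--Wang proofs of the sharp planar linear Furstenberg estimate. In the discretised picture one takes a family $\mathcal{C}$ of $\delta$-tubes around circles whose centre--radius parameters form a $(\delta,v)$-set of cardinality $\delta^{-v}$, together with a $(\delta,w)$-set $P$ of $\delta$-discs such that each tube of $\mathcal{C}$ contains $\gtrsim\delta^{-u}$ discs of $P$, and aims to show $w\ge\min\{u+v,\tfrac{5u+v}{3},u+1\}-o(1)$. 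The three competing terms match the three regimes of \eqref{casesversion}: when $v\le u$, Zahl's bound $u+\min\{u,v\}$ is already sharp and only needs to be quoted; when $2u+v>3$ one is in the ``saturated'' regime, where the two-dimensionality of the plane should force $w\ge u+1$ (already known from \eqref{bound1} and \eqref{sine2} when $3u+v>4$ or $3u+2v>6$, and expected to follow in general from the sharp forms of the local smoothing inequality of \cite{guthwangzhang} and of trilinear cone restriction); and the genuinely new content is the intermediate exponent $\tfrac{5u+v}{3}$, which is the circular analogue of the exponent $\tfrac{3s+t}{2}$ in the sharp planar Furstenberg bound --- the denominator $2$ being replaced by the $3$ of the three-parameter family of circles --- and which is precisely the quantity needed to deduce the main case of Conjecture~\ref{oberlinR3} through the projection heuristic described above.

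For the incidence estimate itself I would, at a given scale $\delta$, represent the incidences by an integral $\int f\,(g\ast\sigma_t)(x)\,dx\,dt$ and split $g=g_{\mathrm{low}}+g_{\mathrm{high}}$ at a frequency $\rho^{-1}$, for an intermediate scale $\delta\le\rho\le1$. The low-frequency contribution is fed back into the inductive hypothesis at scale $\rho$, exactly as in the proofs of \eqref{bound1} and \eqref{bound2}. The high-frequency contribution is where the sharp input must be produced, and here I would run a broad--narrow dichotomy on each tube of $\mathcal{C}$ at scale $\rho$. On the \emph{broad} part --- where the incident discs on a given circle are not concentrated in $O(1)$ many $\rho$-arcs --- the trilinear/broad cone restriction estimate \cite{leevargas} used for \eqref{sine2}, combined with the local smoothing input used for \eqref{bound1}, should yield the full strength $\tfrac{5u+v}{3}$, the improvement over \eqref{sine2} coming from not discarding broadness after a single application but iterating it across scales. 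On the \emph{narrow} part --- where the incidences cluster in a few $\rho$-arcs --- one rescales a single arc: a $\rho$-arc of a $\delta$-circle, after rescaling, becomes an arc at a coarser scale of a unit-curvature curve carrying a genuine Furstenberg configuration, so the inductive hypothesis applies, and the iteration terminates once $\rho$ reaches the critical scale $\delta^{1/2}$. The hoped-for mechanism is that, as with the ``power trick'' for linear Furstenberg sets, the losses in the narrow case telescope over the $\sim\log(1/\delta)$ scales while the broad case controls the final count.

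The step I expect to be the main obstacle is closing this induction with the sharp exponent, because circles --- unlike lines --- forbid only \emph{second}-order tangency, so two circles may be internally or externally tangent to first order and a large sub-family may be tangent along a common $\delta^{1/2}$-arc. These tangent ``bushes'' are the extremal configurations that must be ruled out, and they are precisely the configurations on which the broad norm and trilinear restriction estimate give no information, since along such an arc the curvature of the circle family degenerates and the problem locally reduces to the planar linear Furstenberg problem. One is thus forced to use the Ren--Wang theorem as a black box in the narrow case, and to control carefully how often the arc-rescaling may be iterated before the $(\delta,v)$-set hypothesis on the circle parameters is exhausted; reconciling this bookkeeping with the target exponent $\tfrac{5u+v}{3}$, and ensuring that the Fourier-analytic inputs compose without an $\epsilon$-power loss at each of the $\sim\log(1/\delta)$ stages, is where the real work lies. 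A secondary obstacle, relevant only to an eventual cinematic version, is that the trilinear restriction input is currently available only for the exact cone: upgrading it to the variable-coefficient setting via \cite[Theorem~1.16]{tao} would be needed there, and the paper explicitly leaves this open.
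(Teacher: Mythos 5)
What you are attempting to prove is stated in the paper as a \emph{conjecture}: the authors give no proof of \eqref{casesversion}--\eqref{minversion}, only a heuristic motivation via the duality with restricted projections in $\mathbb{R}^3$ and an Oberlin-type interpolation (the first line being known from \cite{fasslerliuorponen}/\cite{zahl}, the rest open). Your text is likewise not a proof but a research programme, and its central step is missing: the intermediate exponent $\frac{5u+v}{3}$ is never actually derived. You assert that combining the broad/trilinear cone restriction input of Theorem~\ref{trilinearincidencecircle} with the local smoothing input of Theorem~\ref{theoremincidencecircle}, and ``iterating broadness across scales,'' should yield it, but no incidence estimate is formulated, let alone proved, whose exponent closes an induction at $\frac{5u+v}{3}$; the paper's own high-low arguments give only $4u+v-3$ and $2u+\frac{2v}{3}-1$, both strictly weaker throughout the intermediate regime $u<v$, $2u+v\le 3$, and no mechanism in your sketch explains where the additional gain comes from. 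Similarly, the claim that the regime $2u+v>3$ is essentially known is incorrect: \eqref{bound1} and \eqref{sine2} give $u+1$ only when $3u+v>4$ or $3u+2v>6$, and e.g.\ $(u,v)=(0.5,2.1)$ satisfies $2u+v>3$ but neither of those conditions, so that case is genuinely open as well.

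Two further steps of the plan do not go through as stated. First, the narrow-case rescaling: a $\rho$-arc of a $\delta$-neighbourhood of a circle, after parabolic rescaling, is not again (a neighbourhood of) a circle, so the inductive hypothesis for the circular incidence problem does not apply verbatim; one is pushed into a cinematic/variable-coefficient class, and the trilinear restriction input needed there is exactly what the paper flags as an open question (a variable-coefficient version via \cite[Theorem~1.16]{tao} is suggested but not carried out). Second, your own discussion of tangent bushes concedes that the extremal first-order-tangency configurations are precisely where the broad norm gives no information and where one must fall back on the linear (Ren--Wang) theorem with careful multi-scale bookkeeping; this bookkeeping, and the avoidance of $\epsilon$-losses over $\sim\log(1/\delta)$ scales, is acknowledged as ``where the real work lies,'' i.e.\ it is not done. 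As it stands, the proposal identifies plausible ingredients and the right numerology, but it does not prove the statement, which remains a conjecture both in the paper and after your sketch.
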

The first line in \eqref{casesversion} is known \cite{fasslerorponen}; it is included for completeness. 
The heuristic for the second line in \eqref{casesversion} comes from setting $t=v$ and observing that $\frac{5u+v}{3} \leq u+s$ is equivalent to $u \leq \frac{3s}{2} - \frac{t}{2}$. The restrictions $2u+v \leq 3$ and $u \le v$ follow from the conditions $s \leq \min\{ t, 1 \}$ and $u \leq \frac{3s}{2} - \frac{t}{2}$. Examples of product sets show that the lower bound of $u+1$ is best possible whenever it holds, so $u+1$ is the natural conjecture for $2u+v > 3$ (as this would follow from the case $2u+v=3$). The outer two parts of the conjectured minimum in \eqref{minversion} match the lower bounds for the linear Furstenberg problem in the plane \cite{renwang}, but the middle term $\frac{5u+v}{3}$ is always strictly smaller than the corresponding term $\frac{3u+v}{2}$ for the linear case whenever it is the minimum (i.e.~whenever $u < v$). Liu~\cite[p.~302]{liu} conjectured that for $v=1$, any circular $(u,1)$ Furstenberg set has dimension at least $\frac{1+3u}{2}$ (the same lower bound as for linear $(u,1)$-Furstenberg sets in the plane), so Conjecture~\ref{furstenbergconjecture} is strictly weaker than Liu's conjecture in this case. 

\subsection{Sets containing many cinematic curves} If we replace the assumption $u \in [0,1]$ in Theorem~\ref{curvedfurstenberg} with the stronger assumption that each curve intersects the planar set in a set of positive 1-dimensional outer Lebesgue measure, and assume that $v>1$, then we can obtain the following stronger conclusion that the set has positive 2-dimensional outer Lebesgue measure. 
\begin{theorem} \label{positivearea} If $\left\{ \Sigma_{x,t} \right\}_{(x,t)}$ is a family of cinematic functions (see Definition~\ref{cinematicdefn}), if $E \subseteq \mathbb{R}^3$ is a set with $\dim E > 1$, and if $F \subseteq \mathbb{R}^2$ is such that $\mathcal{H}^1\left( \Sigma_{x,t} \cap F \right) >0$ for every $(x,t) \in E$, then $\mathcal{H}^2(F) >0$. \end{theorem}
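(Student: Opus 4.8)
The plan is to reduce the statement to an absolute continuity property of a single auxiliary measure on $\mathbb{R}^2$, obtained as a superposition of arc-length measures on the curves $\Sigma_{x,t}$, and then to establish that property by Fourier analysis, invoking the cinematic hypothesis --- through the sharp local smoothing estimate --- only at the one step that genuinely needs it.

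First I would localise and pigeonhole. Covering the parameter domain by finitely many small balls, I may assume $E$ lies in one of them, so that the cinematic constants, the curvatures of the $\Sigma_{x,t}$ and their lengths are all comparable to fixed constants; let $\sigma_{x,t}$ be arc-length on $\Sigma_{x,t}$ normalised to a probability measure, so that $\sigma_{x,t}(F)>0$ for every $(x,t)\in E$. Since $\dim E>1$, I may pick $\epsilon>0$ with $\dim\{(x,t)\in E:\sigma_{x,t}(F)\ge\epsilon\}>1$, and after replacing $E$ by this subset assume $\sigma_{x,t}(F)\ge\epsilon$ for all $(x,t)\in E$. Now fix $s\in(1,\dim E)$, a Frostman measure $\mu$ on $E$ with $\mu(B(p,r))\lesssim_\mu r^{s}$ and $\mu(E)=1$, and set
\[ \omega:=\int\sigma_{x,t}\,d\mu(x,t),\]
a finite Borel measure on $\mathbb{R}^2$ with $\omega(F)=\int\sigma_{x,t}(F)\,d\mu(x,t)\ge\epsilon>0$. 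It then suffices to prove $\omega\ll\mathcal{H}^2$: if $\mathcal{H}^2(F)=0$ this would force $\omega(F)=0$. (When $F$ is not $\mathcal{H}^2$-measurable one first replaces it by a Borel superset of the same outer measure and each $\Sigma_{x,t}\cap F$ by a measurable hull inside $\Sigma_{x,t}$; this affects nothing.) For the absolute continuity it is enough to show $\widehat\omega\in L^2(\mathbb{R}^2)$, which exhibits $\omega$ as a density in $L^2$.

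To estimate $\widehat\omega(\xi)=\int\widehat{\sigma}_{x,t}(\xi)\,d\mu(x,t)$ I would use that nonvanishing curvature gives $|\widehat{\sigma}_{x,t}(\xi)|\lesssim(1+|\xi|)^{-1/2}$ together with a stationary phase expansion of $\widehat{\sigma}_{x,t}$ into terms $a_{\pm}(x,t,\xi)e^{2\pi i\phi_{\pm}(x,t,\xi)}$ (modulo an $O(|\xi|^{-1})$ tail), the phases $\phi_{\pm}(x,t,\cdot)$ being homogeneous of degree one with critical points in the conormal directions of $\Sigma_{x,t}$. Squaring, expanding the resulting double integral in $\mu\times\mu$, and integrating over a dyadic annulus $|\xi|\sim\lambda$, the key point is that the $\xi$-gradient of $\phi_{\pm}(p,\cdot)-\phi_{\pm'}(p',\cdot)$ is the difference of a point of $\Sigma_p$ and a point of $\Sigma_{p'}$ carrying the prescribed conormals, so it vanishes precisely when $\Sigma_p$ and $\Sigma_{p'}$ are mutually tangent. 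Away from the tangency set $\mathcal{T}=\{(p,p'):\Sigma_p,\Sigma_{p'}\text{ tangent at a common point}\}$ the phase is nonstationary, so the inner $\xi$-integral is $\lesssim_N\lambda\,(1+\lambda|p-p'|)^{-N}$; hence pairs bounded away from $\mathcal{T}$ contribute at most $C_N\lambda\iint(1+\lambda|p-p'|)^{-N}\,d\mu(p)\,d\mu(p')\lesssim\lambda^{1-s}$, using $\mu\times\mu(\{|p-p'|\le\rho\})\lesssim\rho^{s}$. Since $s>1$ these dyadic contributions sum, and the low frequencies are harmless since $\|\widehat\omega\|_\infty\le\omega(\mathbb{R}^2)<\infty$.

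The hard part is the contribution of pairs of nearly tangent curves, i.e.\ $(p,p')$ close to $\mathcal{T}$: there the phase is (nearly) stationary, naive stationary phase bounds the inner integral only by about $\lambda^{1/2}$, and a priori a Frostman measure of dimension barely above $1$ might concentrate near the codimension-one set $\mathcal{T}$ enough to ruin summability. Showing this cannot happen is exactly where the cinematic hypothesis and the deep input enter: forbidding second-order tangencies makes those critical points nondegenerate and $\mathcal{T}$ smooth, and the sharp local smoothing inequality for cinematic averages --- in the circular case the Guth--Wang--Zhang estimate \cite{guthwangzhang}, used just as in the proof of \eqref{bound1} --- controls precisely how much a dimension-$(1+\eta)$ family of curves can pile up along tangencies. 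The cleanest packaging is the fractal $L^2$ bound
\[ \Big\|\,(x,t)\mapsto\!\int f\,d\sigma_{x,t}\,\Big\|_{L^2(d\mu)}\lesssim_{\mu}\|f\|_{L^2(\mathbb{R}^2)},\qquad\mu(B(p,r))\lesssim r^{1+\eta},\]
which for a family of circles is an estimate for $f*\sigma_t$, and which I would prove by the same high-low-plus-local-smoothing scheme as is used for \eqref{bound1}, now at the endpoint $\dim\to1^{+}$. Its dual applied to $f\equiv1$ gives $\|\omega\|_{L^2(\mathbb{R}^2)}\lesssim1$, i.e.\ $\widehat\omega\in L^2$; alternatively, applied directly to indicators of open sets $U_n\supseteq F$ with $|U_n|\to0$ it gives $\epsilon\le\|(x,t)\mapsto\int\chi_{U_n}\,d\sigma_{x,t}\|_{L^2(d\mu)}\lesssim|U_n|^{1/2}\to0$, the contradiction sought. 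That the threshold $1$ is sharp is already visible from the identity $\|f*\sigma_t\|_{L^2(dx\,dt)}\approx\|f\|_{H^{-1/2}(\mathbb{R}^2)}$, and a Kinney-type one-parameter family of circles whose union is Lebesgue-null shows the theorem itself fails at dimension exactly $1$. Granting the near-tangency estimate, $\omega\ll\mathcal{H}^2$ and hence $\mathcal{H}^2(F)>0$.
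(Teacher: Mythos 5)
Your overall architecture is the right one and closely parallels the paper's proof (which in turn follows Wolff's Corollary~3): pigeonhole to get a subfamily of dimension $>1$ on which $\sigma_{x,t}(F)\gtrsim 1$, split $\chi_U$ into high and low frequencies, kill the low frequencies with the trivial $L^\infty$ bound $\|\chi_U\ast\eta_k\|_\infty\lesssim 2^{2k}|U|$, and use sharp (variable coefficient) local smoothing plus the Frostman condition to control the high frequencies. The non-tangent stationary phase analysis you sketch is also fine. The problem is the single lemma you reduce everything to: the ``fractal $L^2$ bound''
\[ \Bigl\| (x,t)\mapsto\int f\,d\sigma_{x,t}\Bigr\|_{L^2(d\mu)}\lesssim_\mu\|f\|_{L^2(\mathbb{R}^2)},\qquad \mu(B(p,r))\lesssim r^{1+\eta}, \]
is \emph{false} for every $\eta<1$, so both of your closing routes (showing $\widehat{\omega}\in L^2$, or bounding $\|\int\chi_{U_n}d\sigma_{x,t}\|_{L^2(d\mu)}$ by $|U_n|^{1/2}$) collapse.

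The counterexample is exactly the tangency configuration you flag as ``the hard part''. Take circles for concreteness and let $f=\chi_R$ with $R$ a $\delta\times\delta^{1/2}$ rectangle, so $\|f\|_2^2\sim\delta^{3/2}$. For every circle internally tangent to the long side of $R$ one has $\int\chi_R\,d\sigma_{x,t}\sim\delta^{1/2}$, and the corresponding parameters $(x,t)$ fill a curved box of dimensions roughly $1\times\delta^{1/2}\times\delta$. A measure with Frostman exponent $1+\eta$ and constant $O(1)$ can assign this box mass $\sim\delta^{\eta/2}$ (cover it by $\delta^{-1/2}$ balls of radius $\delta^{1/2}$, each allowed mass $\delta^{(1+\eta)/2}$). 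Hence $\|\int f\,d\sigma_{x,t}\|_{L^2(d\mu)}^2\gtrsim\delta\cdot\delta^{\eta/2}=\delta^{1+\eta/2}$, which beats $\delta^{3/2}$ whenever $\eta<1$. Local smoothing cannot rescue an $L^2\to L^2(\mu)$ statement because the Guth--Wang--Zhang / Gao--Liu--Miao--Xi input is an $L^4\to L^4$ inequality: running your own scheme gives, for $f$ at frequency $2^k$, something like $\|f\ast\sigma_t\|_{L^2(d\mu)}^2\lesssim 2^{k(\epsilon-\eta/2)}\|f\|_4^2$, and $\|f\|_4$ is not controlled by $\|f\|_2$. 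The saving that makes the theorem work is that $f=\chi_U$ is an indicator, so $\|\chi_U\|_4=|U|^{1/4}$ is small in a way that $\|\chi_U\|_2$ alone does not capture; this is precisely why the paper (and Wolff) pair the high-frequency piece against the exceptional set in $L^{4/3}\times L^4$ (Hölder against $\mathcal{H}^3|_{V_k}$, then local smoothing at $p=4$) rather than in $L^2$, and then convert the resulting bound $\mathcal{H}^3(V_k)\lesssim 2^{-2k+O(\epsilon)k}$ into a covering of $E'$ contradicting $\dim E'>1$. Your second route, applied to $f=\chi_{U_n}$, is the salvageable one, but the exponent you would actually obtain from the $L^4$ argument is a positive power of $|U_n|$ coming from $\|\chi_{U_n}\|_4$, not $|U_n|^{1/2}$, and it cannot be packaged as an a priori $L^2$ smoothing estimate. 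As written, the proof has a genuine gap at its central step. (A minor additional point: the paper also needs a Bernstein-type argument to upgrade the pointwise lower bound at a single $(x,t)\in E_k$ to a lower bound on a ball of radius $\sim 2^{-k}k^{-2}$, since for general cinematic curves the uncertainty principle does not apply directly to $(x,t)\mapsto\int(\chi_U\ast\eta_k)\,d\sigma_{x,t}$; some such step would be needed in any quantitative version of your argument as well.)
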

This is a generalisation of  Corollary~3 in \cite{wolff}, which proved Theorem~\ref{positivearea} for circles. The proof is similar, except that we use variable coefficient local smoothing in place of local smoothing (or decoupling). Above, $\mathcal{H}^s$ is the $s$-dimensional Hausdorff outer measure, which for $s=2$ equals the outer Lebesgue measure in the plane, and for $s=1$ equals the 1-dimensional arc length outer measure when restricted to any curve. If Theorem~\ref{positivearea} is specialised to sine waves, then by using the previously discussed duality with restricted projections in $\mathbb{R}^3$, we obtain a different proof of the known result that if $A \subseteq \mathbb{R}^3$ is Borel and $\dim A >1$, then $\rho_{\theta}(A)$ has positive 1-dimensional Lebesgue measure for a.e.~$\theta \in [0, 2\pi)$. See~\cite{kaenmakiorponenvenieri} for a discussion of this problem.

\subsection{Notation} \label{notation} If $\mathbb{T}$ is a family of $\delta$-balls in Euclidean space, and $\alpha \geq 0$, we let $K_{\alpha, \mathbb{T}, \delta}$ be the smallest constant such that for any $r \geq \delta$ and any ball $B_r$ of radius $r$,
\[ \left\lvert\left\{ D \in \mathbb{T} : D \cap B_r \neq \emptyset \right\} \right\rvert \leq K_{\alpha,\mathbb{T},\delta} \left( \frac{r}{\delta} \right)^{\alpha}. \]
If $K_{\alpha, \mathbb{T}, \delta} < \infty$, we call $\mathbb{T}$ a $(\delta, \alpha, K_{\alpha, \mathbb{T}, \delta})$-set. Note the ``a priori'' bound $K_{\alpha, \mathbb{T}, \delta} \le |\mathbb{T}|$. We define this similarly if $\mathbb{T}$ is a family of points; interchanging points with $\delta$-balls does not make a substantial difference. If $P$ is a family of $\delta$-neighbourhoods of circles in the plane, and $\beta \geq 0$, we define $K_{\beta,P,\delta} = K_{\beta, \Phi(P),\delta}$, where $\Phi$ sends the $\delta$-neighbourhood of a circle centred at $x \in \mathbb{R}^2$ of radius $r>0$ to the $\delta$-ball centred at $(x,r) \in \mathbb{R}^3$. Similarly,  if $P$ is a family of $\delta$-neighbourhoods of ``sine waves'', i.e.~curves of the form 
\[ \left\{  \left(\theta, \frac{a \cos \theta + b \sin \theta +c}{\sqrt{2}}\right):  \theta \in [0, 2\pi) \right\}, \]
where $(a,b,c) \in \mathbb{R}^3$, and if $\beta \geq 0$, we define $K_{\beta, P,\delta} = K_{\beta, \Psi(P),\delta}$, where $\Psi$ sends the $\delta$-neighbourhood of a sine wave to the $\delta$-ball centred at the corresponding point $(a,b,c) \in \mathbb{R}^3$. We define $K_{\beta,P,\delta}$ similarly if circles or sine waves are replaced more generally by families of cinematic curves (see Definition~\ref{cinematicdefn}). If $\mathbb{T}$ is a family of $\delta$-neighbourhoods $T$ of ``light planes'', i.e.~planes in $\mathbb{R}^3$ of the form $t_T\gamma(\theta_T) + \gamma(\theta_T)^{\perp}$, where $\theta_T \in [0, 2\pi]$ and $\gamma(\theta) = \frac{1}{\sqrt{2}}\left( \cos \theta, \sin \theta, 1 \right)$, we define $K_{\alpha, \mathbb{T}, \delta} = K_{\alpha, \widetilde{\mathbb{T} },\delta}$, where $\widetilde{\mathbb{T}}$ is the family of $\delta$-balls in $\mathbb{R}^2$, where the centres of the balls are the points $\left(\theta_T, t_T \right) \in [0, 2\pi] \times \mathbb{R}$.

For any finite sets $A$ and $B$, we define the number of incidences between them by $I(A,B) = \left\lvert \left\{ (a,b) \in A \times B : a \cap b \neq \emptyset \right\} \right\rvert$.

\subsection{Outline}  In Section~\ref{incidence}, we prove incidence bounds that will imply Theorem~\ref{curvedfurstenberg} and Theorem~\ref{cinematicfurstenberg}. In Section~\ref{conversion}, we show how these incidence bounds imply Theorem~\ref{curvedfurstenberg} and Theorem~\ref{cinematicfurstenberg}. In Section~\ref{positiveareasection}, we prove Theorem~\ref{positivearea}. In Section~\ref{projectionsection}, we give a counterexample related to Conjecture~\ref{oberlinR3} and Conjecture~\ref{furstenbergconjecture}. In the appendix, for the case of sine waves we give an alternative proof of the incidence bound related to \eqref{bound1} which uses small cap decoupling instead of variable coefficient local smoothing. We also give some incidence bounds that improve upon those in Section~\ref{incidence} in some cases and are special to sine waves, though they do not yield any new information about sine wave Furstenberg sets. 
\subsection{Acknowledgments}
Y.O. is supported in part by NSF DMS-2142221 and NSF DMS-2055008. K.R. is supported by an NSF GRFP fellowship.  S.T. is supported in part by NSF DMS-2037851.

\section{Incidence bounds} \label{incidence}

\subsection{Circle incidences via local smoothing} 
In this subsection, we prove the circle incidence bound corresponding to \eqref{bound1} in Theorem~\ref{curvedfurstenberg}. The idea we use, of approaching circular Kakeya-type problems via local smoothing inequalities for the wave equation, is inspired by the approach used by Wolff in \cite{wolff}. In particular, the ``high-low'' decomposition of a function that we use is similar to the decomposition in \cite[Lemma~6.2]{wolff}.

The local smoothing inequality of Guth, Wang, and Zhang \cite[Theorem~1.2]{guthwangzhang} states that
\begin{equation} \label{guthwangzhang} \left( \int_1^2 \int_{\mathbb{R}^2} \left\lvert u(x,t)   \right\rvert^4 \, dx \, dt \right)^{1/4} \leq C_{\epsilon} \left(\|u_0\|_{4,\epsilon} + \| u_1 \|_{4, \epsilon-1} \right), \end{equation}
for any $\epsilon>0$, where $u$ solves the wave equation $\Delta u - \partial_{tt} u = 0$ with Schwartz initial data $u(x,0) = u_0(x)$ and $(\partial_t u)(x,0) = u_1(x)$. The solution of the wave equation is characterised by
\[ \widehat{u} (\xi,t) = \widehat{u_0}(\xi) \cos\left( 2 \pi |\xi| t \right) + \frac{ \widehat{u_1}(\xi) }{2\pi |\xi| } \sin\left( 2 \pi |\xi| t\right), \]
where $\widehat{u}$ indicates the Fourier transform of $u$ in the first two variables. Recall that for Schwartz $g$ with $\widehat{g}(\xi) = (1+4\pi^2|\xi|^2)^{-\alpha/2} \widehat{h}(\xi)$, and $0 < \alpha < 2$, the $\|g\|_{p, \alpha}$ norm of $g$ is defined to be $\|h\|_p$ (see e.g.~\cite[p.~135, Eq.~(39)]{stein}). It is straightforward to check that a corollary of \eqref{guthwangzhang} is that if $g$ is a Schwartz function with $\widehat{g}$ supported in $B(0,R) \setminus B(0,R/2)$, then for any $\epsilon >0$, 
\begin{equation} \label{guthwangzhang2} \left( \int_1^2 \int_{\mathbb{R}^2} \left\lvert\int_{\mathbb{R}^2} e^{2 \pi i \langle \xi, x \rangle} \cos\left( 2 \pi |\xi| t \right) \widehat{g}(\xi)\, d\xi   \right\rvert^4 \, dx \, dt \right)^{1/4} 
\leq C_{\epsilon} R^{\epsilon} \left\lVert g \right\rVert_4. \end{equation}
and
\begin{equation} \label{guthwangzhang3} \left(\int_1^2  \int_{\mathbb{R}^2} \left\lvert\int_{\mathbb{R}^2} e^{2 \pi i \langle \xi, x \rangle} \sin\left( 2 \pi |\xi| t \right) \widehat{g}(\xi)\, d\xi   \right\rvert^4 \, dx \, dt \right)^{1/4} \\
\leq C_{\epsilon} R^{\epsilon} \left\lVert g \right\rVert_4. \end{equation}
Inequalities~\eqref{guthwangzhang2} and \eqref{guthwangzhang3} are the versions of the local smoothing inequality we will use below.

\begin{theorem} \label{theoremincidencecircle} Let $\alpha \in [0, 2]$, $\beta \in [0,3]$, and $\delta \in (0,1)$. Let $P$ be a $\left( \delta, \beta, K_{\beta,P,\delta}\right)$-set of $\delta$-neighbourhoods of circles in the plane of radii between 1 and 2. Let $\mathbb{T}$ be a $\left( \delta, \alpha, K_{\alpha, \mathbb{T}, \delta}\right)$-set of $\delta$-discs in the plane. If $3\alpha + \beta \leq 7$, then for any $\epsilon >0$, 
\begin{equation} \label{incidencebound9} I(P, \mathbb{T}) \leq C_{\epsilon} \delta^{-\epsilon}\delta^{-3/4} K_{\beta,P,\delta}^{1/4}   K_{\alpha,\mathbb{T},\delta}^{3/4} |P|^{3/4} |\mathbb{T}|^{1/4} . \end{equation}
If $3\alpha+\beta > 7$, then with $\lambda = \frac{4}{3\alpha+\beta-3} < 1$, for any $\epsilon >0$,
\begin{equation} \label{incidencebound10} I(P, \mathbb{T}) \leq C_{\epsilon} \delta^{-\epsilon} \delta^{-3\lambda/4}   K_{\beta,P,\delta}^{\lambda/4} K_{\alpha,\mathbb{T},\delta}^{3\lambda/4} |P|^{1-\frac{\lambda}{4}} |\mathbb{T}|^{1-\frac{3\lambda}{4}} . \end{equation}
\end{theorem}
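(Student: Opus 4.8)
### The plan

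The plan is to encode the incidence count $I(P,\mathbb{T})$ as an integral of the form $\int_1^2 \int_{\mathbb{R}^2} f \cdot (g \ast \sigma_t)$, where $g$ is (roughly) the indicator of the union of the $\delta$-discs in $\mathbb{T}$, $f$ is the indicator of the union of centres of the circles in $P$ (thought of as lying in $(x,t)$-space, with $t$ the radius), and $\sigma_t$ is arc-length measure on the circle of radius $t$ about the origin. The key point is that $C(x,t) \cap D \neq \emptyset$ for a $\delta$-disc $D$ is essentially equivalent to $(g \ast \sigma_t)(x) \gtrsim \delta$ after suitable normalisation, so $I(P,\mathbb{T})$ is comparable to a weighted integral of $f \cdot (g \ast \sigma_t)$ over the relevant ranges. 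First I would set up this reduction carefully, normalising so that $g = \sum_{D} \chi_D$ and $f$ is a sum of indicators of $\delta$-balls around the lifted circle parameters, and reduce to bounding $\int f\,(g \ast \sigma_t)$.

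Next comes the \textbf{high-low decomposition}. I would fix a threshold frequency $K = \delta^{-1}$ (or a small power thereof, to be optimised), and split $g = g_{\mathrm{low}} + g_{\mathrm{high}}$ via a smooth Littlewood–Paley cutoff, so that $\widehat{g_{\mathrm{low}}}$ is supported in $B(0, K^{-1}\delta^{-1})$-ish and $\widehat{g_{\mathrm{high}}}$ is supported away from the origin; actually, because of the circular average, the natural split is into frequency annuli $|\xi| \sim 2^j$ for $\delta^{-1/2} \lesssim 2^j \lesssim \delta^{-1}$ (frequencies above $\delta^{-1}$ are negligible by the smoothing of $\sigma_t$ at scale $\delta$, and frequencies below $\delta^{-1/2}$ form the "low" part). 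For the \textbf{low part}, I would argue that $g_{\mathrm{low}} \ast \sigma_t$ is essentially constant at scale $\delta^{1/2}$, so the low contribution to the incidence integral is controlled by the incidence count at scale $\delta^{1/2}$ (coarser discs, coarser circles), which is exactly the inductive hypothesis applied at scale $\delta^{1/2}$ with the induced $(\delta^{1/2}, \alpha, \cdot)$ and $(\delta^{1/2}, \beta, \cdot)$ structure. This gives a gain because $(r/\delta^{1/2})^\alpha$ is much smaller than $(r/\delta)^\alpha$; one checks the exponents close.

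For the \textbf{high part}, I would apply Cauchy–Schwarz to pull out $\|f\|_2 \sim (|P| \delta^2)^{1/2}$ and then estimate $\int_1^2 \int |g_{\mathrm{high}} \ast \sigma_t|^2$... but actually $L^2$ alone is not enough, which is exactly where the Guth–Wang–Zhang sharp local smoothing inequality \eqref{guthwangzhang2}–\eqref{guthwangzhang3} enters. The operator $g \mapsto (g \ast \sigma_t)$ with $\widehat{g}$ localised to $|\xi| \sim R$ is, up to harmless factors, the cosine/sine wave propagator applied to $g$, so \eqref{guthwangzhang2} and \eqref{guthwangzhang3} give $\|g_{\mathrm{high},j} \ast \sigma_t\|_{L^4([1,2]\times\mathbb{R}^2)} \lesssim_\epsilon \delta^{-\epsilon} \|g_{\mathrm{high},j}\|_4$ for each dyadic piece $2^j = R$. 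Since $g = \sum_D \chi_D$ with the $D$'s being $\delta$-discs forming a $(\delta,\alpha,K_\alpha)$-set, one has $\|g\|_4^4 \lesssim K_{\alpha,\mathbb{T},\delta}\, |\mathbb{T}|\, \delta^2$ (the $\delta^2$ is the area of a single disc, one factor of $|\mathbb{T}|$ counts the discs, and the overlap of discs inside any disc is bounded via the $(\delta,\alpha,K_\alpha)$ condition — here is where $K_\alpha$ rather than $|\mathbb{T}|$ enters). Then Hölder in $(x,t)$ against $f$ (using $f \leq 1$ supported on a set of measure $\lesssim |P|\delta$, since each lifted circle parameter has a $\delta$-ball and the radius variable contributes a length $\lesssim\delta$... actually the support in $t$ is the full interval $[1,2]$ but restricted to $|P|$ many $\delta$-balls in $x$ for each relevant $t$, giving measure $\lesssim |P|\delta^3$ — I'd need to track this bookkeeping honestly) yields a bound of the shape $\delta^{-3/4-\epsilon} K_{\beta,P,\delta}^{1/4} K_{\alpha,\mathbb{T},\delta}^{3/4} |P|^{3/4} |\mathbb{T}|^{1/4}$ after one more application of the $(\delta,\beta,K_\beta)$ condition on $P$ to bound the number of circles passing through any region. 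Combining the high and low estimates, choosing the threshold $K$ to balance them, and running the induction on $\delta$ (the base case $\delta \sim 1$ being trivial), gives \eqref{incidencebound9} in the range $3\alpha+\beta\le 7$; the range $3\alpha+\beta > 7$ is then obtained by interpolating \eqref{incidencebound9} (applied with the "saturated" exponents $3\alpha+\beta = 7$) against the trivial bound $I(P,\mathbb{T}) \leq |P|\,|\mathbb{T}|$, which produces the parameter $\lambda = \frac{4}{3\alpha+\beta-3}$ and the formula \eqref{incidencebound10}.

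The main obstacle I anticipate is twofold. First, the bookkeeping in the high case: correctly tracking the powers of $\delta$ coming from (i) the area of a disc, (ii) the $t$-integration range, (iii) the passage from $\|f\|_2$ or $\|f\|_{4/3}$ after Hölder back to $|P|$, and (iv) the overlap estimates that convert $|\mathbb{T}|$ and $|P|$ into $K_{\alpha,\mathbb{T},\delta}$ and $K_{\beta,P,\delta}$ where needed. Getting exactly the exponents $1/4$ and $3/4$ requires using local smoothing at exponent $4$ together with a single Cauchy–Schwarz/Hölder split, and the interplay has to be arranged so that $K_\beta$ appears only to the power $1/4$ — this comes from bounding the $L^{4/3}$ (dual) mass of $f$ using the circle $(\delta,\beta,K_\beta)$-condition and is the delicate point. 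Second, closing the induction: one must verify that the exponent gain in the low-frequency term at scale $\delta^{1/2}$ strictly beats the loss, for all $(\alpha,\beta)$ with $3\alpha+\beta\le 7$, and then absorb the $\delta^{-\epsilon}$ losses — standard, but it requires checking that the inductive exponent $3/4$ in $\delta^{-3/4}$ is precisely the fixed point of the recursion $e \mapsto e/2 + (\text{high contribution})$, which it should be by the choice of threshold. Everything else — the Littlewood–Paley decomposition, the reduction to the wave propagator, the trivial base case — is routine.
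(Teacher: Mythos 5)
Your overall architecture matches the paper's: represent incidences by an integral of $f\cdot(g\ast\sigma_t)$, split into high and low frequencies, use Guth--Wang--Zhang local smoothing at $L^4$ in the high case, induct on the scale in the low case, and compare with the trivial bound $I\le|P||\mathbb{T}|$ to reach the range $3\alpha+\beta>7$. The genuine gap is your choice of frequency threshold. You place the low/high cut at $|\xi|\sim\delta^{-1/2}$ and propose to treat each annulus $|\xi|\sim R$ with $\delta^{-1/2}\lesssim R\lesssim\delta^{-1}$ by local smoothing. But on such an annulus the only decay in $R$ comes from the stationary-phase factor $|\widehat{\sigma}(t\xi)|\sim|\xi|^{-1/2}$; the inequality \eqref{guthwangzhang2} gives no further gain in $R$ (it loses $R^{\epsilon}$, and $\|g\ast\widecheck{\phi_R}\|_4\lesssim\|g\|_4$ by Young, with no improvement for smaller $R$). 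So the annulus $R=\delta^{-1/2}$ contributes only a factor $\delta^{1/4}$ where the target exponent requires $\delta^{1/2}$, and your own bookkeeping then yields roughly $\delta^{-1}K_{\beta,P,\delta}^{1/4}K_{\alpha,\mathbb{T},\delta}^{3/4}|P|^{3/4}|\mathbb{T}|^{1/4}$ for that piece --- short of \eqref{incidencebound9} by $\delta^{-1/4}$, and nothing in the proposal handles these intermediate frequencies differently. The paper avoids this by cutting at $|\xi|\sim\delta^{-1\pm\epsilon^2}$: the high part lives only at frequencies $\sim\delta^{-1}$ (full $\delta^{1/2}$ decay), while everything below $\delta^{-1+\epsilon^2}$ is ``low'' and is handled by induction at the barely coarser scale $\delta S$ with $S=\delta^{-\epsilon^2-\epsilon^4}$, via $I(P,\mathbb{T})\lesssim S^{-1}\delta^{-O(\epsilon^4)}I(P_S,\mathbb{T}_S)$; the induction closes because the resulting exponent of $S$ is $\tfrac{3\alpha+\beta-7}{4}\le 0$ (resp.\ its $\lambda$-analogue). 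Note also that your stated source of the low-case gain is off: passing to scale $\delta^{1/2}$ makes the covering constants worse ($K_{\alpha,\mathbb{T}_S,\delta S}\le S^{\alpha}K_{\alpha,\mathbb{T},\delta}$, similarly for $\beta$); the gain is the $S^{-1}$ prefactor together with $(\delta S)^{-3/4}$ replacing $\delta^{-3/4}$.

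A secondary gap: the case $3\alpha+\beta>7$ cannot be obtained by ``applying \eqref{incidencebound9} with saturated exponents'' and interpolating with the trivial bound. The bound \eqref{incidencebound9} is false in general when $3\alpha+\beta>7$ (e.g.\ $\alpha=2$, $\beta=3$ with all $\delta$-discs and all circles gives $I\sim\delta^{-4}$ against a right-hand side $\sim\delta^{-7/2}$), and replacing $\beta$ by $7-3\alpha$ costs a factor $\delta^{-(\beta-7+3\alpha)/4}$ through $K_{\beta,P,\delta}^{1/4}$, producing a bound of a different shape which, combined with the trivial bound, is strictly weaker than \eqref{incidencebound10} for some admissible $(|P|,|\mathbb{T}|)$ (e.g.\ $\alpha=2$, $\beta=3$, $|P|=\delta^{-2}$, $|\mathbb{T}|=\delta^{-1}$, $K$'s $\sim1$: your route gives $\delta^{-3}$ versus the claimed $\delta^{-8/3}$). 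The correct route, as in the paper, is to observe that the high-case and error-term estimates prove \eqref{incidencebound9} for the configuration at hand without any constraint on $3\alpha+\beta$, deduce \eqref{incidencebound10} from it by the two-case comparison with $|P||\mathbb{T}|$ (using $\lambda\le1$), and run the low-case induction directly in the form \eqref{incidencebound10}. Your remaining bookkeeping slips ($\|g\|_4^4\lesssim K_{\alpha,\mathbb{T},\delta}^{3}|\mathbb{T}|\delta^{2}$, not $K_{\alpha,\mathbb{T},\delta}|\mathbb{T}|\delta^{2}$; $\|f\|_2^2\sim K_{\beta,P,\delta}|P|\delta^{3}$ in the lifted three-dimensional picture) are minor and fixable.
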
 
\begin{proof}  By Hölder's inequality and the assumption on the radii of the circles in $P$, we can assume that the balls in $\mathbb{T}$ all lie in $B_2(0,1)$ (if $3\alpha+\beta >7$, we also need to use that $\lambda \leq 1$ and that $\|x\|_p \leq \|x\|_q$ when $p \geq q$), so this will be assumed throughout the proof.  By scaling, we can assume that the radii of the circles in $P$ actually lie between 1.1 and 1.9. We will prove the theorem by induction on $\delta$. The case case $\delta \sim 1$ follows from the trivial bound $I(P, \mathbb{T}) \leq |P| |\mathbb{T}|$.

We first show that \eqref{incidencebound9} implies \eqref{incidencebound10} if $3\alpha+\beta > 7$, since this will be used a few times throughout the proof. Assume that \eqref{incidencebound9} holds. By the trivial bound 
\begin{equation} \label{trivial} I(P, \mathbb{T}) \leq |P| |\mathbb{T}| = |P|^{1-\frac{\lambda}{4}} |\mathbb{T}|^{1-\frac{3\lambda}{4} } \left( |P|^{1/4} |\mathbb{T}|^{3/4} \right)^{\lambda}, \end{equation}
we may assume that $|P|^{1/4} |\mathbb{T}|^{3/4} \geq \delta^{-3/4} K_{\beta,P,\delta}^{1/4} K_{\alpha, \mathbb{T}, \delta}^{3/4}$, since otherwise substituting this (reversed) bound into \eqref{trivial} yields \eqref{incidencebound10}. But if 
\[|P|^{1/4} |\mathbb{T}|^{3/4} \geq \delta^{-3/4} K_{\beta,P,\delta}^{1/4} K_{\alpha, \mathbb{T}, \delta}^{3/4}, \] then \eqref{incidencebound9} gives 
\begin{multline*} I(P, \mathbb{T}) \leq C_{\epsilon} \delta^{-\epsilon} \delta^{-3/4} K_{\beta,P,\delta}^{1/4} K_{\alpha,\mathbb{T},\delta}^{3/4}  |P|^{3/4} |\mathbb{T}|^{1/4} \\
= C_{\epsilon} \delta^{-\epsilon}\delta^{-3/4}  K_{\beta,P,\delta}^{1/4} K_{\alpha,\mathbb{T},\delta}^{3/4}   |P|^{1-\frac{\lambda}{4}} |\mathbb{T}|^{1-\frac{3\lambda}{4}} \left(|P|^{1/4} |\mathbb{T}|^{3/4} \right)^{-(1-\lambda) } \\
\leq  C_{\epsilon} \delta^{-\epsilon}   \delta^{-3\lambda/4} K_{\beta,P,\delta}^{\lambda/4} K_{\alpha,\mathbb{T},\delta}^{3\lambda/4}   |P|^{1-\frac{\lambda}{4}} |\mathbb{T}|^{1-\frac{3\lambda}{4}} \delta^{-3\lambda/4}.
\end{multline*}
This verifies that \eqref{incidencebound9} implies \eqref{incidencebound10} if $3\alpha+\beta > 7$.

Let $\sigma$ be the uniform probability measure on the unit circle, and for each $t>0$ let $\sigma_t$ be the pushforward of $\sigma$ under $x \mapsto tx$ (so that $\sigma_t$ is the uniform probability measure on the circle around 0 of radius $t$). By definition, 
\[ I(P, \mathbb{T}) = \sum_{B \in P} \sum_{T \in \mathbb{T} : T \cap B \neq \emptyset} 1. \]
If $T \cap B \neq \emptyset$ for some $B \in P$, then for any $(x,t) \in B((x_B,t_B), \delta)$,
\begin{equation} \label{referencedbelow} \delta \lesssim \left(\sigma_t \ast \chi_{10T}\right)(x), \end{equation}
where $\left(x_B,t_B\right) \in \mathbb{R}^2 \times [1,2]$ is such that $B$ is the $\delta$-neighbourhood of the circle centred at $x_B$ of radius $t_B$. For technical reasons, we need $\chi_{10T}$ to be a smooth bump function equal to $1$ on $10T$ and vanishing outside $20T$, so we will abuse notation and assume this. If $T \cap B \neq \emptyset$, then by \eqref{referencedbelow}, 
\[ \delta^4 \lesssim \int_{B\left(\left(x_B, t_B\right),\delta\right)}  (\sigma_t \ast \chi_{10T})(x) \, dx \, dt \qquad \text{if }  T \cap B \neq \emptyset. \]
Therefore, if we define 
\[ f = \sum_{T \in \mathbb{T}} \chi_{10T}, \]
then
\begin{equation} \label{incidenceintegral} I(P, \mathbb{T}) \lesssim \delta^{-4} \int  \sum_{B \in P} \chi_{B\left(\left(x_B, t_B\right),\delta\right)} \left(\sigma_t \ast f\right)(x) \, dx \, dt. \end{equation}
Let $\phi$ be a smooth bump function equal to 1 on $B_2\left(0, \delta^{-1-\epsilon^2}\right) \setminus B_2\left(0, \delta^{\epsilon^2-1}\right)$,  vanishing outside $B\left(0, 2\delta^{-1-\epsilon^2} \right)$ and vanishing in $B_2\left(0, \delta^{\epsilon^2-1}/2\right)$. Let $\psi$ be a smooth bump function supported equal to $1-\phi$ in $B_2\left(0, \delta^{\epsilon^2-1} \right)$ and extended by zero outside this ball. We define the ``high'' and ``low'' parts $f_h$, $f_l$ of $f$ by
\[ \widehat{f_h} = \widehat{f} \phi, \qquad \widehat{f_l} = \psi \widehat{f}. \]
Then, apart from a very small error term, $f = f_h + f_l$. Hence, by \eqref{incidenceintegral},
\begin{multline} \label{highlow}  I(P, \mathbb{T}) \lesssim \delta^{-4} \int \sum_{B \in P} \chi_{B\left(\left(x_B, t_B\right),\delta\right)}  \left\lvert \left(\sigma_t \ast f_h\right)(x)\right\rvert \, dx \, dt \\
+\delta^{-4} \int \sum_{B \in P} \chi_{B\left(\left(x_B, t_B\right),\delta\right)}  \left\lvert \left(\sigma_t \ast f_l\right)(x)\right\rvert \, dx \, dt, \end{multline}
where we can assume that the very small error terms do not dominate, as in this case the conclusion of the theorem holds trivially. By Fourier inversion, 
\[ \left(\sigma_t \ast f_h\right)(x) = \int_{\mathbb{R}^2} e^{2\pi i \langle x, \xi \rangle } \widehat{\sigma}(t \xi) \widehat{f_h}(\xi) \, d\xi, \]
for any $x \in \mathbb{R}^2$. The asymptotic formula (see \cite[Appendix~B]{grafakos}) for $\widehat{\sigma}$ is
\begin{multline} \label{asymptotic} \widehat{\sigma}(\xi) = \frac{1}{\pi |\xi|^{1/2}} \cos\left( 2\pi |\xi| - \frac{\pi}{4} \right) + O\left(|\xi|^{-3/2} \right) \\
= \frac{1}{\pi \sqrt{2}|\xi|^{1/2}} \cos\left( 2 \pi |\xi| \right) + \frac{1}{\pi \sqrt{2}|\xi|^{1/2}} \sin\left( 2 \pi |\xi| \right) + O\left(|\xi|^{-3/2}\right), \end{multline}
so we define $\widehat{\sigma^{(1)}}$ and $\widehat{\sigma^{(2)}}$ by 
\[  \widehat{\sigma^{(1)}}(\xi)  = \frac{1}{\pi \sqrt{2} |\xi|^{1/2}} \cos\left( 2\pi |\xi| \right)  + \frac{1}{\pi \sqrt{2}|\xi|^{1/2}} \sin\left( 2 \pi |\xi| \right), \]
and 
\[ \widehat{\sigma^{(2)}}(\xi) = \widehat{\sigma}(\xi) - \widehat{\sigma^{(1)}}(\xi). \]
We do not define $\sigma^{(1)}$ and $\sigma^{(2)}$, so for example $\widehat{\sigma^{(1)}}$ is only a formal symbol and not necessarily a Fourier transform. By \eqref{asymptotic}, 
\begin{equation} \label{errorbound} \left\lvert\widehat{\sigma^{(2)}}(\xi)  \right\rvert \lesssim |\xi|^{-3/2},\qquad \xi \in \mathbb{R}^2. \end{equation} Similarly, we define $\widehat{\sigma^{(1)}_t}$ and $\widehat{\sigma^{(2)}_t}$ by $\widehat{\sigma^{(1)}_t}(\xi) = \widehat{\sigma_1}(t\xi)$ and $\widehat{\sigma^{(2)}_t}(\xi) = \widehat{\sigma_2}(t \xi)$ (this implies that $\widehat{\sigma_t} = \widehat{\sigma^{(1)}_t}+\widehat{\sigma^{(2)}_t}$; because $\widehat{\sigma_t}(\xi) = \widehat{\sigma}(t \xi )$). By \eqref{highlow}, \begin{multline} \label{threeterms} I(P, \mathbb{T}) \lesssim \delta^{-4} \int \sum_{B \in P} \chi_{B\left(\left(x_B, t_B\right),\delta\right)}  \left\lvert \int_{\mathbb{R}^2} e^{2\pi i \langle x, \xi \rangle } \widehat{\sigma^{(1)}_t}(\xi) \widehat{f_h}(\xi) \, d\xi \right\rvert \, dx \, dt \\+\delta^{-4} \int \sum_{B \in P} \chi_{B\left(\left(x_B, t_B\right),\delta\right)}  \left\lvert \int_{\mathbb{R}^2} e^{2\pi i \langle x, \xi \rangle } \widehat{\sigma^{(2)}_t}(\xi) \widehat{f_h}(\xi) \, d\xi \right\rvert \, dx \, dt \\
+\delta^{-4} \int \sum_{B \in P} \chi_{B\left(\left(x_B, t_B\right),\delta\right)}  \left\lvert \left(\sigma_t \ast f_l\right)(x)\right\rvert\, dx \, dt. \end{multline}
Suppose first that the middle term dominates in \eqref{threeterms}. This is the error term, so we expect it to satisfy better bounds than those stated in the theorem. By Cauchy-Schwarz, Plancherel, \eqref{errorbound}, then Plancherel again, and finally the definition of $(\delta, \alpha, K)$-set and a priori bound in Section \ref{notation},
\begin{align}\notag I(P, \mathbb{T}) \notag &\lesssim \delta^{-4} \left(K_{\beta,P,\delta} |P| \delta^3 \right)^{1/2} \left(\int_1^2 \int_{\mathbb{R}^2} \left\lvert \widehat{f_h}(\xi) \widehat{\sigma^{(2)}_t}(\xi) \right\rvert^2 \, d\xi \, dt \right)^{1/2} \\
\notag &\lesssim \delta^{-4} \delta^{3/2 -O(\epsilon^2)} \left(K_{\beta,P,\delta} |P| \delta^3 \right)^{1/2} \left(\int_1^2 \int_{\mathbb{R}^2} \left\lvert \widehat{f}(\xi) \right\rvert^2 \, d\xi \, dt \right)^{1/2} \\
\notag &\lesssim \delta^{-4} \delta^{3/2 -O(\epsilon^2)} \left(K_{\beta,P,\delta} |P| \delta^3 \right)^{1/2} \left( K_{\alpha, \mathbb{T}, \delta} |\mathbb{T}| \delta^2 \right)^{1/2} \\
\notag &= \delta^{-O(\epsilon^2)} K_{\alpha, \mathbb{T}, \delta}^{1/2} K_{\beta,P,\delta}^{1/2} |P|^{1/2} |\mathbb{T}|^{1/2} \\\notag &\lesssim  \delta^{- \frac{\alpha}{4}-O(\epsilon^2)} K_{\alpha, \mathbb{T}, \delta}^{3/4} K_{\beta,P,\delta}^{1/2} |P|^{1/2} |\mathbb{T}|^{1/4} \\
&\le \delta^{- \frac{\alpha}{4}-O(\epsilon^2)} K_{\alpha, \mathbb{T}, \delta}^{3/4} K_{\beta,P,\delta}^{1/4} |P|^{3/4} |\mathbb{T}|^{1/4}, \end{align}
which is better than the required inequality \eqref{incidencebound9} since $\alpha \leq 2 \leq 3$. This proves \eqref{incidencebound9} if the middle term dominates in \eqref{threeterms}. Since  \eqref{incidencebound9} implies \eqref{incidencebound10} when $3\alpha+\beta > 7$, this shows that the theorem is true for any $\alpha$ and $\beta$ if the middle term dominates in \eqref{threeterms}, so we may assume from now on that 
\begin{multline} \label{twoterms} I(P, \mathbb{T}) \lesssim \delta^{-4} \int \sum_{B \in P} \chi_{B\left(\left(x_B, t_B\right),\delta\right)} \left\lvert \int_{\mathbb{R}^2} e^{2\pi i \langle x, \xi \rangle } \widehat{\sigma^{(1)}_t}(\xi) \widehat{f_h}(\xi) \, d\xi \right\rvert\, dx \, dt \\
+\delta^{-4} \int \sum_{B \in P} \chi_{B\left(\left(x_B, t_B\right),\delta\right)} \left\lvert \left(\sigma_t \ast f_l\right)(x)\right\rvert \, dx \, dt. \end{multline}
The ``high case'' is when the first term dominates, and the ``low case'' is when the second term dominates. If we are in the high case, then by Hölder's inequality,
\begin{align*} I(P, \mathbb{T}) &\lesssim \delta^{-4} \left( K_{\beta,P,\delta}^{1/3} |P| \delta^3 \right)^{3/4} \left(\int_1^2 \int_{\mathbb{R}^2} \left\lvert \int_{\mathbb{R}^2} e^{2\pi i \langle x, \xi \rangle } \widehat{\sigma^{(1)}_t}(\xi) \widehat{f_h}(\xi) \, d\xi \right\rvert^4 \, dx \, dt\right)^{1/4}. \end{align*}
Hence, by the definition of $\widehat{\sigma^{(1)}_t}$ (see \eqref{asymptotic}), either
\begin{multline} \label{incidencewaves} I(P, \mathbb{T}) \lesssim \delta^{-4} \left(  K_{\beta,P,\delta}^{1/3}|P| \delta^3 \right)^{3/4} \times \\\
\left( \int_1^2\int_{\mathbb{R}^2}\left\lvert \int_{\mathbb{R}^2} e^{2 \pi i \langle \xi, x \rangle } \cos\left( 2 \pi |\xi| t \right) \frac{\widehat{f_h}(\xi)}{|\xi|^{1/2}} \, d\xi   \right\rvert^4 \, dx \, dt \right)^{1/4}, \end{multline}
or 
\begin{multline} \label{incidencewaves2}  I(P, \mathbb{T}) \lesssim \delta^{-4} \left(  K_{\beta,P,\delta}^{1/3}|P| \delta^3 \right)^{3/4} \times \\
\left( \int_1^2\int_{\mathbb{R}^2}\left\lvert \int_{\mathbb{R}^2} e^{2 \pi i \langle \xi, x \rangle } \sin\left( 2 \pi |\xi| t \right) \frac{\widehat{f_h}(\xi)}{|\xi|^{1/2}} \, d\xi   \right\rvert^4 \, dx \, dt \right)^{1/4}. \end{multline}
If we apply \eqref{guthwangzhang2} and \eqref{guthwangzhang3} to \eqref{incidencewaves} and \eqref{incidencewaves2}, (recall $\widehat{f_h} = \widehat{f} \phi$), we get 
\[ I(P, \mathbb{T}) \lesssim \delta^{-O(\epsilon^2)}\delta^{-4} \left( K_{\beta,P,\delta}^{1/3}|P| \delta^3 \right)^{3/4} \left\lVert f \ast \mathcal{F}^{-1}\left(\frac{\phi}{|\cdot|^{1/2} } \right) \right\rVert_4. \]
Hence (by Young's convolution inequality),
\begin{equation} \label{annoying} I(P, \mathbb{T}) \lesssim \delta^{-O(\epsilon^2)} \delta^{-4} \left(K_{\beta,P,\delta}^{1/3} |P| \delta^3 \right)^{3/4} \| f \|_4 \left\lVert \mathcal{F}^{-1}\left(\frac{\phi}{|\cdot|^{1/2} } \right) \right\rVert_1. \end{equation}
It is straightforward to show that
\[ \left\lVert \mathcal{F}^{-1}\left(\frac{\phi}{|\xi|^{1/2} } \right) \right\rVert_1 \lesssim\delta^{1/2-O(\epsilon^2)}, \]
so \eqref{annoying} becomes
\begin{multline*} I(P, \mathbb{T}) \lesssim \delta^{-O(\epsilon^2)} \delta^{-4} \delta^{1/2} \left( K_{\beta,P,\delta}^{1/3}|P| \delta^3 \right)^{3/4} \left( K_{\alpha, \mathbb{T}, \delta}^3 |\mathbb{T}| \delta^2 \right)^{1/4} \\
= \delta^{-O(\epsilon^2)}\delta^{-3/4} K_{\beta,P,\delta}^{1/4} K_{\alpha, \mathbb{T}, \delta}^{3/4}|P|^{3/4} |\mathbb{T}|^{1/4}. \end{multline*}
This implies \eqref{incidencebound9}. Since \eqref{incidencebound9} implies \eqref{incidencebound10} when $3\alpha + \beta >7$, this proves the bound in the high case.

Now suppose we are in the low case; so that the second term dominates in \eqref{twoterms}:
\[ I(P, \mathbb{T}) \lesssim  \delta^{-4} \int \sum_{B \in P} \chi_{B\left(\left(x_B, t_B\right),\delta\right)} \left\lvert \left(\sigma_t \ast f_l\right)(x)\right\rvert \, dx \, dt. \]
In this case, by definition, $f_l = \sum_{T \in \mathbb{T}} \chi_{10T} \ast \widecheck{\psi}$. By Hausdorff-Young, 
\[ \left\lVert \chi_{10T} \ast \widecheck{\psi}\right\rVert_{\infty} \lesssim \|\psi\|_1 \delta^2 \lesssim \delta^{2\epsilon^2}. \] Moreover, $\widecheck{\psi}$ is negligible outside $B_2\left(0, \delta^{1-\epsilon^2 -\epsilon^4}\right)$. Hence, if we let $S = \delta^{-\epsilon^2-\epsilon^4}$, then apart from a negligible error term,
\[ \left\lvert \chi_{10T} \ast \widecheck{\psi} \right\rvert \lesssim \delta^{-O(\epsilon^4)} S^{-2} \chi_{ST}.\]
Hence, for any $(x,t) \in \mathbb{R}^2 \times [1,2]$,
\begin{equation} \label{usedagainlater} \left\lvert \left(\sigma_t \ast f_l\right)(x)\right\rvert \lesssim S^{-2} \delta^{-O(\epsilon^4)} \sigma_t \ast \left( \sum_{T \in \mathbb{T}} \chi_{ST}\right)(x), \end{equation}
apart from a negligible error term, where $ST$ is the disc with the same centre as $T$ but with radii scaled by $S$. Thus
\begin{multline} \label{incidencethickening} I(P, \mathbb{T}) \lesssim  S^{-2} \delta^{-4-O(\epsilon^4)} \int \sum_{B \in P} \chi_{B\left(\left(x_B, t_B\right),\delta\right)} \sigma_t \ast \left( \sum_{T \in \mathbb{T}} \chi_{ST}\right)(x) \, dx \, dt \\
\lesssim S^{-1} \delta^{-O(\epsilon^4)} I(P_S, \mathbb{T}_S), \end{multline}
where $P_S$ is the set of $\delta S$ neighbourhoods of circles from $P$, $\mathbb{T}_S$ is the set of discs from $\mathbb{T}$ with the same centres, but with radii scaled by $S$. As before, we may assume the negligible error terms do not contribute to \eqref{incidencethickening}, since the conclusion of the theorem follows in this case. By induction on $\delta$, we can apply the theorem at scale $\delta S$ to \eqref{incidencethickening}, to get
\begin{multline} \label{secondlast} I(P, \mathbb{T}) \lesssim \delta^{-O(\epsilon^4) }S^{-1} \times\\
\Bigg\{\begin{aligned} &(\delta S)^{-3/4-\epsilon}  K_{\beta,P_S,\delta S}^{1/4} K_{\alpha, \mathbb{T}_S, \delta S}^{3/4}  |P_S|^{3/4} |\mathbb{T}_S|^{1/4} && 3\alpha + \beta \leq 7 \\
&(\delta S)^{-3\lambda/4-\epsilon}   K_{\beta,P_S,\delta S}^{\lambda/4} K_{\alpha, \mathbb{T}_S, \delta S}^{3\lambda/4}|P_S|^{1-\frac{\lambda}{4}} |\mathbb{T}_S|^{1-\frac{3\lambda}{4}} && 3\alpha + \beta >7. \end{aligned} \end{multline}
By the inequalities
\[ K_{\alpha, \mathbb{T}_S, \delta S} \leq S^{\alpha} K_{\alpha, \mathbb{T}, \delta}, \qquad K_{\beta, P_S, \delta S} \leq S^{\beta} K_{\beta, P, \delta},\]
and since $|\mathbb{T}_S| = |\mathbb{T}|$ and $|P_S| = |P|$, 
\eqref{secondlast} implies that
\begin{multline*}  I(P, \mathbb{T}) \lesssim \delta^{\epsilon^3-O(\epsilon^4) } \times\\
\Bigg\{\begin{aligned} & S^{-\frac{7}{4} + \frac{3\alpha}{4} + \frac{\beta}{4}} \delta^{-\frac{3}{4}-\epsilon}   K_{\beta,P,\delta}^{1/4}K_{\alpha, \mathbb{T}, \delta}^{3/4} |P|^{3/4} |\mathbb{T}|^{1/4} && 3\alpha + \beta \leq 7 \\
& S^{-1-\frac{3\lambda}{4} +\frac{3\lambda \alpha}{4} + \frac{\lambda \beta}{4} }\delta^{-
\frac{3\lambda}{4} - \epsilon}   K_{\beta,P,\delta}^{\lambda/4} K_{\alpha, \mathbb{T}, \delta}^{3\lambda/4}|P|^{1-\frac{\lambda}{4}} |\mathbb{T}|^{1-\frac{3\lambda}{4}} && 3\alpha + \beta >7. \end{aligned} \end{multline*}
In either case, the exponent of $S$ is non-positive, so by induction the conclusion of the theorem holds in the low case, and this finishes the proof. \end{proof}

\subsection{Broad incidences between circles and discs}


In this subsection, we prove an incidence estimate for a set of discs and a set of annuli.  Specifically, we will prove an upper bound for the number of \emph{broad incidences} (to be defined momentarily) between an ($\alpha$-dimensional) set of $\delta$-discs and a $(\beta$-dimensional) set of $\delta$-neighborhoods of circles with radius between $1$ and $2$.  This incidence estimate corresponds to \eqref{sine2} in Theorem ~\ref{curvedfurstenberg} and is proved using an $L^3$ trilinear Fourier restriction for the cone.

\begin{definition}
Given a set $P$ of $\delta$-neighbourhoods of circles in $\mathbb{R}^2$ of radii between 1 and 2, and a set $\mathbb{T}$ of $\delta$-discs in $\mathbb{R}^2$, let $A$ and $\K$ be large parameters, and let $\{\tau\}_{\tau}$ be a boundedly overlapping cover of $S^1$ by arcs of diameter $\K^{-1}$. Define
\[ I_{A,\K}^{\broad}(P, \mathbb{T} ) = \sum_{B \in P} \inf_{(V_1, V_2, \dotsc, V_A) \in G(1,2) } \sup_{\dist(\tau, V_a) \geq \K^{-1} \forall \, a } I(B_{\tau}, \mathbb{T}), \]
where, given a circle $B$ centred at $x_B$ of radius $t_B$, corresponding to the annulus $B$, $B_{\tau}$ is the arc of the annulus corresponding to the arc $\tau$, i.e.
\[ B_{\tau} = \left\{x_B + te : e \in \tau, \left\lvert t-t_B\right\rvert < \delta\right\}. \] 
\end{definition}
This setup is based on the broad norms defined in \cite{guth2}. The reason for using ``broad incidences'' here rather than ``trilinear incidences'' is that we will  need a quasi-triangle inequality to separate the high and low cases when using the high-low method. Broad norms were introduced in \cite{guth2} precisely because they satisfy a quasi-triangle inequality.

Bounds on $I_{A,\K}(P, \mathbb{T})$ (for $1 \leq A,\K \ll \delta^{-\epsilon}$) are just as applicable to circular Furstenberg sets as bounds on $I(P, \mathbb{T})$. 

\begin{theorem} \label{trilinearincidencecircle} Let $0 \leq \alpha \leq 2$ and $0 \leq \beta \leq 3$ with $3\alpha + 2\beta \leq 9$.  Let $0 < \delta   < 1$, let $P$ be a $(\delta, \beta, K_{\beta,P,\delta})$-set of $\delta$-neighbourhoods of circles in $\mathbb{R}^2$ of radii between 1 and 2, and let $\mathbb{T}$ be a $(\delta, \alpha, K_{\alpha, \mathbb{T}, \delta})$-set of $\delta$-discs in $\mathbb{R}^2$. Then for any $\epsilon >0$, $A \geq |\log \delta|$ with $A \in \mathbb{N}$ and $\K \geq 1$,
\begin{equation} \label{trilinearrestrictioncircle} I_{A,\K}^{\broad}(P, \mathbb{T}) \leq C_{\epsilon} \K^{100} \delta^{-\epsilon - 1/2} K_{\beta,P,\delta}^{1/3} K_{\alpha, \mathbb{T}, \delta}^{1/2} |P|^{2/3} \left\lvert \mathbb{T} \right\rvert^{1/2}. \end{equation}
\end{theorem}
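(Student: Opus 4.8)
The plan is to mimic the high-low strategy of Theorem~\ref{theoremincidencecircle}, but with the sharp $L^4$ local smoothing inequality replaced by an $L^3$ trilinear restriction estimate for the cone. The key point is that a broad incidence of $B$ with a disc $T$ forces three arcs $B_{\tau_1}, B_{\tau_2}, B_{\tau_3}$ with pairwise transverse directions to all meet $\mathbb{T}$, which (after translating to frequency space via the asymptotics~\eqref{asymptotic} for $\widehat{\sigma}$) produces three pieces of $\widehat{f_h}$ living on transverse caps of the cone. We will induct on $\delta$, with the base case $\delta\sim 1$ being the trivial bound.

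First I would set up the incidence integral exactly as in~\eqref{incidenceintegral}, writing $f = \sum_{T\in\mathbb{T}}\chi_{10T}$ and $I^{\broad}_{A,\K}(P,\mathbb{T}) \lesssim \delta^{-4}\int \sum_{B}\chi_{B((x_B,t_B),\delta)}\,|\sigma_t\ast f|\,dx\,dt$, where on the left the sum over $B$ only sees the arcs $B_\tau$ with $\dist(\tau, V_a)\ge \K^{-1}$; the role of the $\K^{-100}$ loss is to absorb the angular localisation and the fact that each $B_\tau$ is a $\delta\times\K^{-1}$-tube rather than a full annulus. Then decompose $f = f_h + f_l$ with $\widehat{f_h}$ supported in an annulus of radius $\sim \delta^{-1}$ as in the proof of Theorem~\ref{theoremincidencecircle}, and split $\widehat{\sigma_t}$ into its main term $\widehat{\sigma^{(1)}_t}$ and the $O(|\xi|^{-3/2})$ error $\widehat{\sigma^{(2)}_t}$. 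The $\sigma^{(2)}$ error term is handled by Cauchy--Schwarz and Plancherel as in the earlier theorem (it satisfies a better bound because of the extra $\delta^{3/2}$ decay), and the low term $f_l$ is handled by the same thickening argument as in~\eqref{incidencethickening}, reducing to the theorem at scale $\delta S$ with $S = \delta^{-O(\epsilon^2)}$ and checking that the exponent of $S$ is non-positive (using $3\alpha + 2\beta \le 9$).

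The heart of the argument is the high term with the main cone piece. Here one uses the broad structure: $\chi_B$ on the left is really $\sum_a$ over $\gtrsim A$ essentially-transverse arcs, and one can select, for each $B$, three arcs $\tau_1(B),\tau_2(B),\tau_3(B)$ that are pairwise $\K^{-1}$-separated and still capture a constant fraction of the broad incidences (this is the standard reduction from broad to trilinear, paying only a $\K^{O(1)}$ factor). One then Hölder's in the $B$-variable with exponent distributed so that the geometric side contributes $(K_{\beta,P,\delta}^{1/3}|P|\delta^3)^{2/3}$ and the analytic side is an $L^3$ norm, and on the analytic side one gets $\int_1^2\int |E_1 g_1 \, E_2 g_2 \, E_3 g_3|$, where $E_j$ are extension operators associated to $\K^{-1}$-transverse caps of the light cone in $\R^3$ and $g_j$ are the corresponding pieces of $\widehat{f_h}/|\cdot|^{1/2}$. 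Invoking the $L^3$ trilinear cone restriction estimate from \cite{leevargas} (which gives a $\K^{O(1)}\delta^{-\epsilon}$-type constant with no $\delta$-power loss beyond what the $|\xi|^{-1/2}$ weight contributes) bounds this by $\K^{O(1)}\delta^{-\epsilon}\prod_j\|f\ast \mathcal{F}^{-1}(\phi_j/|\cdot|^{1/2})\|_3 \lesssim \K^{O(1)}\delta^{-\epsilon}\delta^{3/2}\|f\|_3^3$ after Young's inequality and the $L^1$-bound $\|\mathcal{F}^{-1}(\phi/|\cdot|^{1/2})\|_1 \lesssim \delta^{1/2-O(\epsilon^2)}$; combining with $\|f\|_3^3 \lesssim K_{\alpha,\mathbb{T},\delta}^2|\mathbb{T}|\delta^2$ and collecting powers of $\delta$ yields~\eqref{trilinearrestrictioncircle}.

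\textbf{Main obstacle.} I expect the main technical difficulty to be the passage from the broad norm to a genuine trilinear expression while keeping all the factors clean: one must verify that after fixing the infimum over $(V_1,\dots,V_A)$, the supremum over transverse arcs can be replaced by an actual product of three extension operators on transverse cone caps, that the $\K^{-1}$-angular separation on $S^1$ corresponds to true transversality of the cone caps (here the map $\theta\mapsto\gamma(\theta)$ and the geometry of the light cone must be used), and that the quasi-triangle inequality for broad norms lets the high-low splitting pass through the broad quantity without losing more than a constant. The rest — the $\sigma^{(2)}$ error term, the low-frequency thickening induction, and the bookkeeping of $\delta$-powers — is routine by analogy with Theorem~\ref{theoremincidencecircle}.
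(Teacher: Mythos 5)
Your overall architecture (high--low splitting of $f$, stationary phase on $\widehat{\sigma_t}$, Cauchy--Schwarz/Plancherel for the $O(|\xi|^{-3/2})$ error, thickening plus induction in the low case using $3\alpha+2\beta\le 9$, and the broad-to-trilinear selection of three $\K^{-1}$-separated arcs per $B$ at a $\K^{O(1)}$ cost) matches the paper. The gap is in the endgame of the high case. The trilinear cone restriction estimate of Lee--Vargas that the paper invokes is an $L^2$-based statement: it bounds $\int \prod_{j=1}^3 |Eh_j|$ over a ball of radius $\sim\delta^{-1}$ by $\K^{O(1)}\delta^{-\epsilon}\prod_j\|h_j\|_{2}$, where $h_j$ are the (rescaled) cap densities. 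It does not give a bound of the form $\prod_j\bigl\lVert f\ast\mathcal{F}^{-1}(\phi_j/|\cdot|^{1/2})\bigr\rVert_3$; producing an $L^3$-data right-hand side by Young's inequality and the kernel bound $\lVert\mathcal{F}^{-1}(\phi/|\cdot|^{1/2})\rVert_1\lesssim\delta^{1/2}$ is the mechanism of the $L^4$ local smoothing proof of Theorem~\ref{theoremincidencecircle}, and transplanting it here amounts to asserting an $L^3$ trilinear local-smoothing-type inequality that \cite{leevargas} does not contain. Moreover, even if one grants your chain, the numerology fails: with the analytic side $\bigl(\delta^{3/2}\|f\|_3^3\bigr)^{1/3}$ and $\|f\|_3^3\lesssim K_{\alpha,\mathbb{T},\delta}^2|\mathbb{T}|\delta^2$, the high case yields roughly
\[ I_{A,\K}^{\broad}(P,\mathbb{T}) \lesssim \K^{O(1)}\delta^{-\epsilon-5/6} K_{\beta,P,\delta}^{1/3} K_{\alpha,\mathbb{T},\delta}^{2/3} |P|^{2/3} |\mathbb{T}|^{1/3}, \]
which is weaker than \eqref{trilinearrestrictioncircle} (for instance when $K_{\alpha,\mathbb{T},\delta}\sim 1$ and $|\mathbb{T}|\sim\delta^{-\alpha}$ with $\alpha<2$, the exponent of $\delta^{-1}$ exceeds $\tfrac12+\tfrac{\alpha}{2}$ by $\tfrac13-\tfrac{\alpha}{6}>0$). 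The correct accounting, as in the paper, keeps $L^2$ norms throughout: the $|\xi|^{-1/2}$ weight on the high-frequency annulus $|\xi|\sim\delta^{-1}$ is absorbed into the $L^2$ norm of each cap density, giving a gain of $\delta^{1/2}$ per function (hence $\delta^{1/2}$ after the cube root), and then $\|g_j\|_2\le\|f\|_2\lesssim (K_{\alpha,\mathbb{T},\delta}|\mathbb{T}|\delta^2)^{1/2}$ produces exactly the factor $K_{\alpha,\mathbb{T},\delta}^{1/2}|\mathbb{T}|^{1/2}$ in \eqref{trilinearrestrictioncircle}. Measuring $f$ in $L^3$ is strictly lossier and cannot be repaired by bookkeeping.

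Two further points. First, your Hölder step in the $B$-variable should give $K_{\beta,P,\delta}^{1/3}\bigl(|P|\delta^3\bigr)^{2/3}$ (from $\bigl\lVert\sum_B\chi_B\bigr\rVert_{3/2}$ with $\bigl\lVert\sum_B\chi_B\bigr\rVert_\infty\lesssim K_{\beta,P,\delta}$), not $\bigl(K_{\beta,P,\delta}^{1/3}|P|\delta^3\bigr)^{2/3}$; this is minor since the correct version still matches the theorem. Second, before Hölder and the trilinear estimate can even be applied, one must interchange the geometric mean $\prod_{j}\bigl(\int_{B}|f_j|\bigr)^{1/3}$ with the integration over each $\delta$-ball $B$; the paper does this via the locally constant property, replacing $|f_j|$ by $|f_j|\ast\zeta$ for a suitable bump $\zeta$ with $\lVert\zeta\rVert_1\lesssim 1$ that is locally constant at scale $\delta^{1+10\epsilon^2}$, and only then sums over $B$, applies Hölder, and rescales to invoke \cite[Eq.~5]{leevargas}. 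Your sketch identifies the broad-to-trilinear reduction as the main obstacle but omits this mollification step, which is what makes the heuristic ``treat the integrand as constant on $B$'' rigorous.
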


\begin{proof} If $\K \geq \delta^{-1/2}$ then this is trivial, so we can assume that $\K < \delta^{-1/2}$. By Hölder's inequality, using that the radii of the circles are between 1 and 2, and since $\|x  \|_{q} \leq \|x\|_1$ when $q \geq 1$, it may be assumed that the $\delta$-discs in $\mathbb{T}$ all lie in $B_2(0,1)$, and that the centres of the circles in $P$ all lie in $B_2(0,10)$. We will prove the theorem by induction on $\delta$. The base case $\delta \sim 1$ is clear by the trival bound of $|P| |\mathbb{T} |$. By definition,
\begin{multline} \label{incidenceintegral3}  I_{A,\K}^{\broad}(P, \mathbb{T}) \lesssim \delta^{-4-30\epsilon^2} \sum_{B \in P} \\
\inf_{(V_1,\dotsc, V_A) \in G(1,2) } \sup_{d(\tau, V_a) \geq \K^{-1} \forall \, a } \int_{B\left((x_B,t_B), \delta^{1+10\epsilon^2}\right)} \left\lvert \sum_{T \in \mathbb{T}} \chi_{T} \ast \sigma_{\tau,t}(x) \right\rvert \, dx \, dt, \end{multline}
where $x_B$ is the centre of $B$, $t_B$ is the radius, and $\sigma_{\tau,t}$ is (essentially) the restriction of $\sigma_t$ to the arc corresponding to $\tau$, given by multiplying $\sigma$ multiplied by $\psi_{\tau}$, where $\psi_{\tau}$ is a smooth bump function $\sim 1$ on $\tau$ and vanishing outside $1.1\tau$, and then pushing forward $\psi_{\tau} \sigma$ by $x\mapsto tx$. The use of $\delta^{1+10\epsilon^2}$ rather than $\delta$ in \eqref{incidenceintegral3} is a very minor technicality which can be mostly ignored. We now do the same high-low decomposition as in the proof of Theorem~\ref{theoremincidencecircle}. Let $\phi$ be a smooth bump function equal to 1 on $B_2\left(0, \delta^{-1-\epsilon^2}\right) \setminus B_2\left(0, \delta^{\epsilon^2-1}\right)$,  vanishing outside $B_2\left(0, 2\delta^{-1-\epsilon^2} \right)$ and vanishing in $B_2\left(0, \delta^{\epsilon^2-1}/2\right)$. Let $\psi$ be a smooth bump function supported equal to $1-\phi$ in $B_2\left(0, \delta^{\epsilon^2-1} \right)$ and extended by zero outside this ball. Then by \eqref{incidenceintegral3}, 
\begin{multline} \label{incidenceintegral4} I_{A,\K}^{\broad}(P, \mathbb{T}) \lesssim \delta^{-4-30\epsilon^2} \sum_{B \in P} \inf_{(V_1,\dotsc, V_A) \in G(1,2) } \sup_{d(\tau, V_a) \geq \K^{-1} \forall \, a } \\
\Bigg(\int_{B\left((x_B,t_B), \delta^{1+10\epsilon^2}\right)} \left\lvert \sum_{T \in \mathbb{T}} \chi_{T} \ast \sigma_{\tau,t}  \ast \widecheck{\phi}(x) \right\rvert \, dx \, dt \\
+ \int_{B\left((x_B,t_B), \delta^{1+10\epsilon^2}\right)}  \left\lvert \sum_{T \in \mathbb{T}} \chi_{T} \ast \sigma_{\tau,t}  \ast \widecheck{\psi}(x) \right\rvert \, dx \, dt \Bigg),\end{multline}
apart from a negligible error term which we can ignore (here we need to replace $\chi_T$ by an equivalent smooth bump equal to 1 on $T$ and vanishing outside $2T$). We can assume that $|\log \delta | \geq 4$ since the theorem holds trivially for large $\delta$. The above inequality \eqref{incidenceintegral4} implies that
\begin{multline} \label{highlowdecomp2} I_{A,\K}^{\broad}(P, \mathbb{T}) \lesssim \delta^{-4-30\epsilon^2} \sum_{B \in P} \inf_{(V_1,V_2,V_3,V_4) \in G(1,2) } \sup_{d(\tau, V_a) \geq \K^{-1} \forall \, a \in \{1,2,3,4\} } \\ \int_{B\left((x_B,t_B), \delta^{1+10\epsilon^2}\right)} \left\lvert \sum_{T \in \mathbb{T}} \chi_{T} \ast \sigma_{\tau,t}  \ast \widecheck{\phi}(x) \right\rvert \, dx \, dt \\
+ \delta^{-4-30\epsilon^2} \sum_{B \in P} \inf_{(V_5, \dotsc, V_A) \in G(1,2) } \sup_{d(\tau, V_a) \geq \K^{-1} \forall \, a \in \{5, \dotsc, A\} } \\  \int_{B\left((x_B,t_B), \delta^{1+10\epsilon^2}\right)}  \left\lvert \sum_{T \in \mathbb{T}} \chi_{T} \ast \sigma_{\tau,t} \ast \widecheck{\psi}(x) \right\rvert \, dx \, dt.\end{multline}
The high case is when the first term dominates, and the low case is when the second term dominates.  In the low case, let $S = \delta^{-\epsilon^2-\epsilon^4}$. 
By Young's convolution inequality followed by the Hausdorff-Young inequality,
\[ \left\lVert \chi_T \ast \widecheck{\psi} \right\rVert_{\infty} \lesssim \delta^{2\epsilon^2}, \]
and hence (apart from a negligible error term which can be ignored)
\[ \left\lvert \chi_T \ast \widecheck{\psi} \right\rvert \lesssim \delta^{2\epsilon^2} \chi_{(S/2)T}. \]
Therefore, in the low case,
\begin{multline} \label{inductivecircle} I_{A,\K}^{\broad}(P, \mathbb{T}) \lesssim \delta^{-4-30\epsilon^2} \sum_{B \in P} \inf_{(V_5, \dotsc, V_A) \in G(1,2) } \sup_{d(\tau, V_a) \geq \K^{-1} \forall \, a \in \{5, \dotsc, A\} }\\
  \int_{B\left((x_B,t_B), \delta^{1+10\epsilon^2}\right)}  \left\lvert \sum_{T \in \mathbb{T}} \chi_{T} \ast \sigma_{\tau,t}  \ast \widecheck{\psi}(x) \right\rvert \, dx \, dt \\
  \lesssim \delta^{-4-30\epsilon^2} \delta^{2\epsilon^2} \sum_{B \in P} \inf_{(V_5, \dotsc, V_A) \in G(1,2) } \sup_{d(\tau, V_a) \geq \K^{-1} \forall \, a \in \{5, \dotsc, A\} }\\
  \int_{B\left((x_B,t_B), \delta^{1+10\epsilon^2}\right)}  \left\lvert \sum_{T \in \mathbb{T}} \chi_{(S/2)T} \ast \sigma_{\tau,t} (x) \right\rvert \, dx \, dt \\
\lesssim S^{-1} \delta^{-2\epsilon^4}I^{\broad}_{A-4,\K} \left(P_S, \mathbb{T}_S \right)
 \end{multline}
 where $P_S$ consists of the same annuli of circles as in $P$, except of width $\delta S$ instead of $\delta$, and $\mathbb{T}_S$ consists discs of the same centres as those in $\mathbb{T}$, but of radius $\delta S$ instead of $\delta$. 

The assumption $A \geq |\log \delta|$ implies that $A-4 \geq |\log \delta S |$ provided that $\delta$ is sufficiently small depending only on $\epsilon$, with the remaining cases being handled by the constant $C_{\epsilon}$ in the theorem statement. Therefore, we can apply induction on $\delta$ to \eqref{inductivecircle} to get 
\[ I_{A,\K}^{\broad}(P, \mathbb{T}) \lesssim \delta^{-2\epsilon^4}S^{\frac{\alpha}{2} + \frac{\beta}{3} - \frac{3}{2} - \epsilon} C_{\epsilon}  \K^{100} \delta^{-\epsilon - 1/2} K_{\beta,P,\delta}^{1/3} K_{\alpha, \mathbb{T}, \delta}^{1/2} |P|^{2/3} \left\lvert \mathbb{T} \right\rvert^{1/2}. \]
The exponent of $S$ is negative by the assumption $3\alpha+2\beta \leq 9$, and the negative power of $S$ eliminates the $\delta^{-2\epsilon^4}$ factor and implicit constants, so this finishes the proof of \eqref{trilinearrestrictioncircle} in the low case. 

Now suppose we are in the high case, so that the first part of \eqref{highlowdecomp2} dominates:
\begin{multline} \label{firstpart} I_{A,\K}^{\broad}(P, \mathbb{T}) \lesssim \delta^{-4-30\epsilon^2} \sum_{B \in P} \inf_{(V_1,V_2,V_3,V_4) \in G(1,2) } \sup_{d(\tau, V_a) \geq \K^{-1} \forall \, a \in \{1,2,3,4\} } \\\int_{B\left((x_B,t_B), \delta^{1+10\epsilon^2}\right)} \left\lvert \sum_{T \in \mathbb{T}} \chi_{T} \ast \sigma_{\tau,t} \ast \widecheck{\phi}(x) \right\rvert \, dx \, dt.\end{multline}
Let $g = \sum_{T \in \mathbb{T}} \chi_{T} \ast \widecheck{\phi}$. Then by Fourier inversion, the function in the integral above is
\begin{equation} \label{abbreviation} g \ast \sigma_{\tau,t}(x) = \int_{\mathbb{R}^2} e^{2 \pi i \langle x, \xi\rangle} \widehat{g}(\xi) \widehat{\sigma_{\tau}}(t\xi) \, d\xi. \end{equation}
Via stationary phase, we will show that the above satisfies
\begin{multline} \label{provedbelow} \int_{\mathbb{R}^2} e^{2 \pi i \langle x, \xi\rangle} \widehat{g}(\xi) \widehat{\sigma_{\tau}}(t\xi) \, d\xi =C_1 \int_{\mathbb{R}^2} e^{2\pi i \left\langle (x, t), (\xi, |\xi| ) \right\rangle } \widehat{g}(\xi) |t \xi|^{-1/2} \psi_{\tau} (-\xi/ |\xi|)  \, d\xi \\
+ C_2 \int_{\mathbb{R}^2} e^{2\pi i \left\langle (x, t), (\xi, -|\xi| ) \right\rangle } \widehat{g}(\xi) |t \xi|^{-1/2} \psi_{\tau} (\xi/ |\xi|)  \, d\xi \\+ \int_{\mathbb{R}^2} e^{2 \pi i \langle x, \xi\rangle} \widehat{g}( \xi) O\left(\K^{O(1)}|\xi|^{-3/2}\right) \, d\xi,\end{multline}
for some absolute constants $C_1$ and $C_2$. Here the last integral means 
\[ \int_{\mathbb{R}^2} e^{2 \pi i \langle x, \xi\rangle} \widehat{g}( \xi) k(\xi) \, d\xi \]for some function $k$ satisfying $k(\xi) = O\left(\K^{O(1)}|\xi|^{-3/2}\right)$. We first write
\[ \widehat{\sigma_{\tau}}(t\xi) = \int e^{-2\pi i \langle t \xi, y \rangle} \psi_{\tau}(y) \, d\sigma(y). \]
The actual powers of $\K$ will not be important, but eventually they will be much smaller than the $\K^{100}$ factor in the theorem statement. Given $\xi$ above, let $U=U_{\xi}$ be a rotation of the plane satisfying $U\xi = |\xi| e_2 = (0, |\xi|)$, so that by rotation invariance of $\sigma$, 
\begin{align*} \widehat{\sigma_{\tau}}(t\xi) &= \int e^{-2\pi i \langle t \xi, U^*y \rangle} \psi_{\tau}(U^*y) \, d\sigma(y)\\
&=\int e^{-2\pi i t\langle |\xi| e_2, y \rangle} \psi_{\tau}(U^*y) \, d\sigma(y), \end{align*}
where $U^* = U^T$ is the transpose of $U$. As in \cite[p.~42]{wolff2}, we decompose $\sigma$ using a partition of unity, and then apply non-stationary phase (e.g.~\cite[Proposition~6.1]{wolff2}) to get that only small neighbourhoods of $y= \pm e_2$ contribute significantly to the above\footnote{Instead of using \cite[Proposition~6.1]{wolff}, this can easily be proved directly by breaking the circle into four pieces and integrating by parts many times.}. This yields
\begin{align*}
\widehat{\sigma_{\tau}}(t\xi) &= \int_{-1/2}^{1/2} e^{2\pi i t|\xi| \sqrt{1-y^2}} h_1(y) \psi_{\tau}(U^*(y, -\sqrt{1-y^2})) \, dy \\
&\quad +\int_{-1/2}^{1/2} e^{-2\pi i t|\xi| \sqrt{1-y^2}} h_2(y) \psi_{\tau}(U^*(y, \sqrt{1-y^2})) \, dy + O\left(\K^{O(1)} |\xi|^{-3/2}\right), \end{align*}
where $h_1,h_2$ are smooth functions supported in $[-1/2,1/2]$ independent of $\xi$ and $\K$. By stationary phase (e.g.~\cite[Proposition~6.4]{wolff2} with $N=0$),
\begin{multline} \label{sigmadecomp} \widehat{\sigma_{\tau}}(t\xi)  = C_1 |t\xi|^{-1/2} e ^{2\pi i t |\xi|}\psi_{\tau}(-\xi/|\xi|) + C_2 |t\xi|^{-1/2} e^{-2\pi i t |\xi| }\psi_{\tau}(\xi/|\xi|) \\
+ O\left(\K^{O(1)}|\xi|^{-3/2}\right), \end{multline}
where $C_1$, $C_2$ are absolute constants. This verifies \eqref{provedbelow}; by substitution of \eqref{sigmadecomp} into the left-hand side of \eqref{provedbelow}. Applying \eqref{provedbelow} to \eqref{abbreviation} and then \eqref{firstpart} gives
\begin{multline} \label{surfacedecomp} I_{A,\K}^{\broad}(P, \mathbb{T}) \lesssim \delta^{-4-30\epsilon^2} \sum_{B \in P} \inf_{(V_1,V_2) \in G(1,2) } \sup_{d(\tau, V_a) \geq \K^{-1} \forall \, a \in \{1,2\} } \\\int_{B\left((x_B,t_B), \delta^{1+10\epsilon^2}\right)} \left\lvert \sum_{T \in \mathbb{T}} \chi_{T,\tau} \ast \sigma_t^{(1)} \ast \widecheck{\phi}(x) \right\rvert \, dx \, dt\\
+\delta^{-4-30\epsilon^2} \sum_{B \in P} \inf_{(V_3,V_4) \in G(1,2) } \sup_{d(\tau, V_a) \geq \K^{-1} \forall \, a \in \{3,4\} } \\\int_{B\left((x_B,t_B), \delta^{1+10\epsilon^2}\right)} \left\lvert \sum_{T \in \mathbb{T}} \chi_{T,\tau} \ast \sigma_t^{(2)}  \ast \widecheck{\phi}(x) \right\rvert \, dx \, dt \\
+\delta^{-4-30\epsilon^2} \sum_{B \in P} \sup_{\tau} \int_{B\left((x_B,t_B), \delta^{1+10\epsilon^2}\right)} \left\lvert \sum_{T \in \mathbb{T}} \chi_{T} \ast \sigma_{t,\tau}^{(3)} \ast \widecheck{\phi}(x) \right\rvert \, dx \, dt\end{multline}
where $\sigma_t^{(1)}$ and $\sigma_t^{(2)}$ are defined by 
\begin{equation} \label{sigmaoneseconddefn} \widehat{\sigma_t^{(1)}}(\xi) + \widehat{\sigma_t^{(2)}}(\xi)   = \frac{C_1 e^{2 \pi i |\xi| t}}{ |t|^{1/2} | \xi|^{1/2}}   + \frac{C_2 e^{-2 \pi i |\xi| t}}{ |t|^{1/2} | \xi|^{1/2}}, \end{equation}
where $\sigma_{t,\tau}^{(3)}$ is the remainder term which satisfies 
\begin{equation} \label{remainderdecay} \left\lvert \widehat{\sigma_{t,\tau}^{(3)}}(\xi) \right\rvert \lesssim \K^{O(1)} |\xi|^{-3/2}, \end{equation}
and where $\chi_{T,\tau}$ is multiplication of $\chi_T$ on the Fourier side by either $\psi_{\tau}(-\xi/|\xi|)$ or $\psi_{\tau}(\xi/|\xi|)$, with $\psi_{\tau}(-\xi/|\xi|)$ being used for $\sigma_t^{(1)}$, and $\psi_{\tau}(\xi/|\xi|)$ being used for $\sigma_t^{(2)}$, in \eqref{surfacedecomp}. 
If the third sum dominates in \eqref{surfacedecomp},  then by pigeonholing (resulting in a loss by a factor $\K$) we can assume the $\tau$ attaining the sup is independent of $B$. Applying Cauchy-Schwarz, followed by Plancherel's theorem and the Fourier decay of $\sigma_{t,\tau}^{(3)}$ from \eqref{remainderdecay}, then Plancherel again, gives the upper bound 
\begin{multline*} I_{A,\K}^{\broad}(P) \lesssim \K^{10}\delta^{-O(\epsilon^2)} \delta^{-4} \delta^{3/2} \left( |P| \delta^3 \right)^{1/2} K_{\beta,P,\delta}^{1/2} K_{\alpha,\mathbb{T},\delta}^{1/2}\left( |\mathbb{T}| \delta^2 \right)^{1/2} \\
= \K^{10}\delta^{-O(\epsilon^2)} K_{\beta,P,\delta}^{1/2} K_{\alpha,\mathbb{T},\delta}^{1/2}|P|^{1/2} |\mathbb{T}|^{1/2}. \end{multline*}
Since $K_{\beta,P,\delta} \le |P|$, this implies the bound stated in the theorem.

Therefore, by symmetry we may assume that the first sum in \eqref{surfacedecomp}, corresponding to $\sigma_t^{(1)}$, dominates. For each $B \in P$, we claim that there exists a triple $(\tau_1, \tau_2, \tau_3)$, depending on $B$, with $\dist(\tau_i, \tau_j) \gtrsim \K^{-1}$ for all $i \neq j$, and such that
\begin{multline} \label{broadtrilinearcircle} \inf_{(V_1,V_2) \in G(1,2) } \sup_{d(\tau, V_a) \geq \K^{-1} \forall \, a \in \{1,2\} } \\ \int_{B\left((x_B,t_B), \delta^{1+10\epsilon^2}\right)} \left\lvert \sum_{T \in \mathbb{T}} \chi_{T,\tau} \ast \sigma_t^{(1)} \ast \widecheck{\phi}(x) \right\rvert \, dx \, dt\\
\lesssim \prod_{j=1}^3 \left(\int_{B\left((x_B,t_B), \delta^{1+10\epsilon^2}\right)} \left\lvert \sum_{T \in \mathbb{T}} \chi_{T,\tau_j} \ast \sigma_t^{(1)} \ast \widecheck{\phi}(x) \right\rvert \, dx \, dt \right)^{1/3}. \end{multline}
This can be shown as follows. Let $(V_1, V_2)$ attain the infimum in the left-hand side. Let $\tau_1$ be the arc attaining the inner supremum given $V_1$ and $V_2$, so that $\dist(\tau_1,V_j) \geq \K^{-1}$ for $j \in \{1,2\}$ and
\begin{multline*} \sup_{d(\tau, V_a) \geq \K^{-1} \forall \, a \in \{1,2\} } \int_{B\left((x_B,t_B), \delta^{1+10\epsilon^2}\right)} \left\lvert \sum_{T \in \mathbb{T}} \chi_{T,\tau} \ast \sigma_t^{(1)}  \ast \widecheck{\phi}(x) \right\rvert \, dx \, dt \\
=  \int_{B\left((x_B,t_B), \delta^{1+10\epsilon^2}\right)} \left\lvert \sum_{T \in \mathbb{T}} \chi_{T,\tau_1} \ast \sigma_t^{(1)}  \ast \widecheck{\phi}(x) \right\rvert \, dx \, dt. \end{multline*}
There must exist $\tau_2$ with $\dist(\tau_2, \tau_1) \gtrsim \K^{-1}$ and 
\begin{multline*} \int_{B\left((x_B,t_B), \delta^{1+10\epsilon^2}\right)} \left\lvert \sum_{T \in \mathbb{T}} \chi_{T,\tau_2} \ast \sigma_t^{(1)}  \ast \widecheck{\phi}(x) \right\rvert \, dx \, dt \\
\gtrsim \int_{B\left((x_B,t_B), \delta^{1+10\epsilon^2}\right)} \left\lvert \sum_{T \in \mathbb{T}} \chi_{T,\tau_1} \ast \sigma_t^{(1)} \ast \widecheck{\phi}(x) \right\rvert \, dx \, dt, \end{multline*} 
since otherwise we would have
\begin{multline} \label{contradictioncircle} \inf_{(W_1,W_2) \in G(1,2) } \sup_{d(\tau, W_a) \geq \K^{-1} \forall \, a \in \{1,2\} } \\ \int_{B\left((x_B,t_B), \delta^{1+10\epsilon^2}\right)} \left\lvert \sum_{T \in \mathbb{T}} \chi_{T,\tau} \ast \sigma_t^{(1)}\ast \widecheck{\phi}(x) \right\rvert \, dx \, dt \\
\ll \int_{B\left((x_B,t_B), \delta^{1+10\epsilon^2}\right)} \left\lvert \sum_{T \in \mathbb{T}} \chi_{T,\tau_1} \ast \sigma_t^{(1)}  \ast \widecheck{\phi}(x) \right\rvert \, dx \, dt; \end{multline}
by picking both which $W_i$ to be lines passing through the origin and the centre of $\tau_1$, and this would give a contradiction since the right-hand side of \eqref{contradictioncircle} equals the left-hand side of \eqref{contradictioncircle}; by the definition of $\tau_1$. Similarly, there must exist $\tau_3$ with $\dist(\tau_3, \tau_1) \gtrsim \K^{-1}$, $\dist(\tau_3, \tau_2) \gtrsim \K^{-1}$, and 
\begin{multline*} \int_{B\left((x_B,t_B), \delta^{1+10\epsilon^2}\right)} \left\lvert \sum_{T \in \mathbb{T}} \chi_{T,\tau_3} \ast \sigma_t^{(1)} \ast \widecheck{\phi}(x) \right\rvert \, dx \, dt \\
\gtrsim \int_{B\left((x_B,t_B), \delta^{1+10\epsilon^2}\right)} \left\lvert \sum_{T \in \mathbb{T}} \chi_{T,\tau_1} \ast \sigma_t^{(1)}  \ast \widecheck{\phi}(x) \right\rvert \, dx \, dt; \end{multline*} 
since otherwise we could take, for $i \in \{1,2\}$, $W_i$ to be the line through the origin and the centre of $\tau_i$ in \eqref{contradictioncircle}. Hence, by \eqref{broadtrilinearcircle} and since the first sum in \eqref{surfacedecomp} dominates,
\begin{multline*} I_{A,\K}^{\broad}(P, \mathbb{T}) \lesssim \\\delta^{-4-30\epsilon^2} \sum_{B \in P} \prod_{j=1}^3 \left(\int_{B\left((x_B,t_B), \delta^{1+10\epsilon^2}\right)}\left\lvert \sum_{T \in \mathbb{T}} \chi_{T,\tau_j(B)} \ast \sigma_t^{(1)}  \ast \widecheck{\phi}(x) \right\rvert \, dx \, dt \right)^{1/3}.\end{multline*}
The number of distinct triples $(\tau_{1}(B), \tau_{2}(B), \tau_{3}(B))$ as $B$ varies over $P$ is $\lesssim \K^3$, so by pigeonholing we can find a fixed triple $(\tau_{1}, \tau_{2}, \tau_{3})$, such that 
\begin{multline} \label{allPcircle} I_{A,\K}^{\broad}(P, \mathbb{T}) \lesssim \\\delta^{-4-30\epsilon^2}  \K^3 \sum_{B \in P} \left( \prod_{j=1}^3  \int_{B\left((x_B,t_B), \delta^{1+10\epsilon^2}\right)} \left\lvert \sum_{T \in \mathbb{T}} \chi_{T,\tau_j} \ast \sigma_t^{(1)}  \ast \widecheck{\phi}(x) \right\rvert \, dx \, dt\right)^{1/3}.\end{multline}
The idea of the rest of the proof is that due to the Fourier support of the integrand, for each ball $B \in P$ we can treat the integrands above as constant, and therefore we can interchange the product and $1/3$ power with the integral. 
If this were literally true, then moving the product and $1/3$ power inside the integral, applying Hölder's inequality, then then the trilinear Fourier restriction theorem for the cone from \cite[Theorem~2.1]{leevargas} would finish the proof. This idea is only a heuristic and not strictly correct, so the rest of the proof consists of the technically correct version of this idea. We note that this is a standard heuristic; see e.g.~\cite{duzhang,lucarogers}.

For each $j \in \{1,2,3\}$ let $f_j(x,t) = \sum_{T \in \mathbb{T}} \chi_{T,\tau_j} \ast \sigma_t^{(1)} \ast \widecheck{\phi}(x)$. Then 
\[ f_j = f_j \ast \widecheck{\eta}, \]
where $\eta$ is a smooth bump function supported in a ball of radius $\delta^{-1-10\epsilon^2}$ in $\mathbb{R}^3$, and equal to 1 on a slightly smaller ball containing the Fourier support of $f_j$. Therefore,
\[ |f_j| \lesssim |f_j| \ast \zeta, \] where 
\[ \zeta(x) = \frac{\delta^{-3-30\epsilon^2}}{1 + \delta^{-4(1+10\epsilon^2 ) } |x|^4 }. \]
 The number $4$ above is not imporant; all that matters is that it is some number greater than 3. The function $\zeta$ is positive and locally constant on balls of radius $\delta^{1+10\epsilon^2}$ up to a factor $\sim 1$ (this can be shown by rescaling and considering the case $\delta=1$). Therefore, $ |f_j| \ast \zeta$ is also constant on balls of radius $\delta$, up to a factor $\sim 1$. The function $\zeta$ satisfies 
\begin{equation} \label{zetaboundscircle} \left\lVert \zeta \right\rVert_1 \lesssim 1, 
\end{equation} 
By the locally constant property, \eqref{allPcircle} implies that
\[ I_{A,\K}^{\broad}(P, \mathbb{T}) \lesssim \delta^{-4-30\epsilon^2}  \K^3 \sum_{B \in P} \int \chi_{B\left((x_B,t_B), \delta^{1+10\epsilon^2}\right)} \prod_{j=1}^3 \left(|f_j| \ast \zeta \right)^{1/3} \, dx \, dt. \]
Moving the sum inside the integral and then applying Hölder's inequality to the above gives
\begin{multline} \label{incidencetrilinearcircle} I_{A,\K}^{\broad}(P, \mathbb{T}) \lesssim \delta^{-4-30\epsilon^2}  \K^3  \left( \delta^3 |P| \right)^{2/3} K_{\beta,P,\delta}^{1/3} \times \\ \left(\iiint  \prod_{k=1}^3 \zeta(y_k) \int_{B_2(0,20) \times [1/2,5/2]} \prod_{j=1}^3 |f_j((x,t)-y_j)| \, dx \, dt \, dy_1 \, dy_2 \, dy_3 \right)^{1/3},
\end{multline}
where above we used the assumption that the centres of the circles in $P$ all lie in $B_2(0,10)$ to restrict the domain of integration. Since $f_j(x,t) = \sum_{T \in \mathbb{T}} \chi_{T,\tau_j} \ast \sigma_t^{(1)} \ast\widecheck{\phi}(x)$, let $g_j = \sum_{T \in \mathbb{T}} \chi_{T,\tau_j} \ast \widecheck{\phi}$, so that $f_j(x,t) = g_j \ast \sigma_t^{(1)}(x)$. By recalling the definition of $\sigma_t^{(1)}$ from \eqref{sigmaoneseconddefn}, for $1/2 \leq t \leq 5/2$,
\[ \left\lvert f_j(x,t)\right\rvert \sim \left\lvert\int_{\mathbb{R}^2} \frac{e^{2\pi i \langle (\xi, |\xi|), (x,t) \rangle} \widehat{g_j}(\xi)}{|\xi|^{1/2}} \, d\xi\right\rvert. \]
Let $E$ be the extension operator for the light cone in $\mathbb{R}^3$, given by
\[ Eg(x,t) =  \int_{\mathbb{R}^2} e^{2\pi i \langle (\xi, |\xi|), (x,t) \rangle} g(\xi) \, d\xi, \]
so that by the above, for $1/2 \leq t \leq 5/2$,  and any $y_j$,
\[ \left\lvert f_j((x,t)-y_j)\right\rvert \sim \left\lvert E\left( e^{- 2\pi i \langle (\cdot, |\cdot| ), y_j \rangle }\widehat{g_j}/|\cdot|^{1/2}\right)(x,t)\right\rvert. \]
Hence, by a change of variables, fo any $y_1$, $y_2$, $y_3$,
\begin{multline} \label{firstvariablechange} \int_{B_2(0,20) \times [1/2, 5/2]} \prod_{j=1}^3 |f_j((x,t)-y_j)| \, dx \, dt \\
\lesssim \delta^3\int_{B_3(0, 100\delta^{-1} ) } \prod_{j=1}^3  \left\lvert E\left(e^{- 2\pi i \langle (\cdot, |\cdot| ), y_j \rangle }\widehat{g_j}/|\cdot|^{1/2}\right)(\delta x,\delta t)\right\rvert \, dx \, dt. \end{multline}
For each $j \in \{1,2,3\}$, define $h_j^{(y_j)}$ by $h_j^{(y_j)}(\eta) = |\eta|^{-1/2} e^{- 2\pi i \langle (\eta/\delta, |\eta/\delta| ), y_j \rangle }\widehat{g_j}(\eta/\delta)$, so that by another change of variables
\[ E\left(e^{- 2\pi i \langle (\cdot, |\cdot| ), y_j \rangle }\widehat{g_j}/|\cdot|^{1/2}\right)(\delta x,\delta t) = \delta^{-3/2} Eh_j^{(y_j)}(x,t). \]
Hence \eqref{firstvariablechange} becomes
\begin{multline*} \int_{B_2(0,20) \times [1/2,5/2]} \prod_{j=1}^3 |f_j((x,t)-y_j)| \, dx \, dt \\
\lesssim \delta^{-3/2} \int_{B_3(0,100 \delta^{-1} ) } \prod_{j=1}^3 \left\lvert Eh_j^{(y_j)}(x,t) \right\rvert \, dx \, dt. \end{multline*}
For each $j$ and $y_j$, the support of $h_j^{(y_j)}$ is a rescaling of the Fourier support of $g_j$, which is contained in the sector $\{\xi : \xi/|\xi| \in \tau_j \}$. Thus, the $\K^{-1}$ separation of the arcs $\tau_j$ implies that the functions $h_j^{(y_j)}$ have $\K^{-1}$ angularly separated supports from other. Therefore, we can apply the trilinear Fourier restriction theorem for the cone (see \cite[Theorem~2.1]{leevargas} for the exact version) to the right-hand side of the above, to get
\begin{multline*} \int_{B_3(0,100)} \prod_{j=1}^3 |f_j((x,t)-y_j)| \, dx \, dt \lesssim \delta^{-3/2-O(\epsilon^2)} \K^{O(1)} \prod_{j=1}^3 \left\lVert h_j^{(y_j)}\right\rVert_2 \\
\lesssim \delta^{3/2-O(\epsilon^2)} \K^{O(1)} \prod_{j=1}^3 \|g_j\|_2. \end{multline*}
Substituting the above into \eqref{incidencetrilinearcircle} and using \eqref{zetaboundscircle} gives
\begin{multline*} I_{A,\K}^{\broad}(P, \mathbb{T}) \lesssim \delta^{-4-O(\epsilon^2)}  \K^{50}  \left( \delta^3 |P| \right)^{2/3} K_{\beta,P,\delta}^{1/3}  K_{\alpha,\mathbb{T},\delta}^{1/2} \delta^{1/2} \left( \delta^2 \left\lvert \mathbb{T} \right\rvert\right)^{1/2} \\
 = \delta^{-\frac{1}{2}-O(\epsilon^2)}  \K^{50}  K_{\beta,P,\delta}^{1/3}  K_{\alpha,\mathbb{T},\delta}^{1/2}   |P|^{2/3} \left\lvert \mathbb{T} \right\rvert^{1/2}.\end{multline*}
 This finishes the proof of \eqref{trilinearrestrictioncircle} in the remaining high case. \end{proof}

\subsection{Broad incidences between sine waves and discs}
In this subsection, we prove the (broad) incidence bounds corresponding to \eqref{sine2} and \eqref{sine1} in Theorem~\ref{curvedfurstenberg} for sine waves using \texorpdfstring{$L^3$}{L3} trilinear cone restriction and a trilinear Kakeya-type bound for slabs.  

\begin{definition}
Let $P$ be a set of $\delta$-balls in $\mathbb{R}^3$.  Let $\mathbb{T}$ be a set of $\delta$-slabs normal to light rays, i.e., suppose that for each $T \in \mathbb{T}$, there is an angle $\theta = \theta(T) \in [0, 2 \pi)$ so that $T$ is normal to $\left( \cos \theta, \sin \theta, 1 \right)$. Given $\kappa>0$, define the number $I_{\kappa}(P, \mathbb{T})$ of ``$\kappa$-trilinear incidences'' between $P$ and $\mathbb{T}$ to be 
\begin{multline*} I_{\kappa}(P, \mathbb{T} ) = \sum_{B \in P}\\
 \left\lvert \left\{ (T_1,T_2,T_3) \in \mathbb{T}^3: T_1 \cap T_2 \cap T_3 \cap B \neq \emptyset, \quad 
\min_{i \neq j} \left\lvert \theta(T_i)-\theta(T_j) \right\rvert  \geq \kappa 
 \right\} \right\rvert^{1/3}, \end{multline*}
 where $\theta(T)$ is the angle $\theta \in [0, 2\pi)$ such that $T$ is normal to $\left( \cos \theta, \sin \theta, 1 \right)$. Dualizing the above definition, if $P$ is a set of $\delta$-neighbourhoods of sine waves in $\mathbb{R}^2$ (i.e.~graphs of the functions $\theta \mapsto \frac{ a \cos \theta + b \sin \theta + c }{\sqrt{2}}$ over $[0, 2\pi]$), and $\mathbb{T}$ is a set of $\delta$-discs in the plane, and $\kappa>0$, define the number $I_{\kappa}(P, \mathbb{T})$ of $\kappa$-trilinear incidences between $P$ and $\mathbb{T}$ to be 
\begin{multline*} I_{\kappa}(P, \mathbb{T} ) = \sum_{B \in P}\\
 \left\lvert \left\{ (T_1,T_2,T_3) \in \mathbb{T}^3: T_j \cap B \neq \emptyset \, \forall j, \quad 
\min_{i \neq j}\dist(T_i, T_j)  \geq \kappa 
 \right\} \right\rvert^{1/3}. \end{multline*}
 \end{definition}
For any $\kappa>0$, $I_{\kappa}(P, \mathbb{T}) \leq I_0 (P, \mathbb{T}) = I(P, \mathbb{T})$, but upper bounds on $I_{\kappa}(P, \mathbb{T})$ with $\kappa$ an arbitrarily small positive constant will be applicable to Furstenberg sets in a similar way to upper bounds on $I(P, \mathbb{T})$. 

As in the circular case, we also define a slightly weaker ``broad'' version, which works better with the high-low method. This setup is based on the broad norms defined in \cite{guth2}. Let $A$ and $\K$ be large parameters, and let $\{\tau\}_{\tau}$ be a boundedly overlapping cover of the circle
\[ S^2 \cap \left\{ (x_1,x_2,x_3) \in \mathbb{R}^3 : x_3 = \sqrt{x_1^2+x_2^2} \right\}, \]
by arcs of diameter $\K^{-1}$. Given a set $P$ of $\delta$-balls in $\mathbb{R}^3$, a set $\mathbb{T}$ of $\delta$-slabs normal to light rays, let $\mathbb{T}_{\tau}$ be the subset of $T \in \mathbb{T}$ with normals contained in $\tau$. Define
\[ I_{A,\K}^{\broad}(P, \mathbb{T} ) = \sum_{B \in P} \inf_{(V_1, V_2, \dotsc, V_A) \in G(1,3) } \sup_{\dist(\tau, V_a) \geq \K^{-1} \forall \, a } I(P, \mathbb{T}_{\tau} ). \]
Again, we dualize the above definition as follows. If $P$ is a set of $\delta$-neighbourhoods of sine waves in $\mathbb{R}^2$, and $\mathbb{T}$ is a set of $\delta$-discs in the plane, for any interval $\tau \subseteq [0, 2\pi]$ of diameter $\K^{-1}$, let $\mathbb{T}_{\tau}$ be the set of discs $D \in \mathbb{T}$ with $D \cap \tau \times \mathbb{R} \neq \emptyset$. Define
\[ I_{A,\K}^{\broad}(P, \mathbb{T} ) = \sum_{B \in P} \inf_{(\theta_1, \theta_2, \dotsc, \theta_A) \in [0, 2\pi) } \sup_{\dist(\tau, \theta_a) \geq \K^{-1} \forall \, a } I(P, \mathbb{T}_{\tau} ). \]

\begin{theorem} \label{trilinearincidence} Let $0 \leq \alpha \leq 2$, $0 \leq \beta \leq 3$, let $\delta>0$, let $P$ be a $(\delta, \beta, K_{\beta,P,\delta})$-set of $\delta$-balls in $\mathbb{R}^3$, and let $\mathbb{T}$ be a set of $\delta$-slabs in $\mathbb{R}^3$. Then for any $\kappa>0$, there is a constant $C=C(\kappa) = \kappa^{-O(1)}$ such that 
\begin{equation} \label{trilinearkakeya} I_{\kappa}(P, \mathbb{T}) \leq C_{\kappa}   K_{\beta,P,\delta}^{1/3}  |P|^{2/3}\left\lvert \mathbb{T} \right\rvert. \end{equation}
If additionally $\mathbb{T}$ is a $(\delta, \alpha, K_{\alpha, \mathbb{T}, \delta})$-set of $\delta$-slabs in $\mathbb{R}^3$, if $3\alpha + 2\beta \leq 9$, and if the balls in $P$ are all contained in a fixed ball of radius 1, then for any $\epsilon >0$, $A \geq |\log \delta|$ and $K \geq 1$,
\begin{equation} \label{trilinearrestriction} I_{A,\K}^{\broad}(P, \mathbb{T}) \leq  C_{\epsilon} \K^{100} \delta^{-\epsilon - 1/2} K_{\beta,P,\delta}^{1/3} K_{\alpha, \mathbb{T}, \delta}^{1/2} |P|^{2/3} \left\lvert \mathbb{T} \right\rvert^{1/2}. \end{equation}
Both bounds hold under the same conditions on the parameters if $P$ is a set of $\delta$-neighbourhoods of sine waves in the plane, and $\mathbb{T}$ is a set of $\delta$-discs in the plane. 
\end{theorem}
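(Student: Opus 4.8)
The statement has two essentially independent parts, and my plan is to treat them separately. The Kakeya-type bound \eqref{trilinearkakeya} I expect to be elementary --- no induction --- coming from the quantitative transversality of light-ray normals, the Katz--Tao condition on $P$, and Hölder's inequality. For the broad incidence bound \eqref{trilinearrestriction} I would follow the scheme of Theorem~\ref{trilinearincidencecircle} essentially verbatim: high--low decomposition, induction on $\delta$ in the low case, and the $L^3$ trilinear Fourier restriction theorem for the cone from \cite[Theorem~2.1]{leevargas} in the high case. Throughout I would use that the two formulations in the statement are interchangeable: by the duality of \cite{kaenmakiorponenvenieri}, a sine wave with parameters $z=(a,b,c)\in\R^3$ is incident to the $\delta$-disc centred at $(\theta_0,y_0)$ precisely when $z$ lies within $O(\delta)$ of the light plane $\{w\in\R^3:\langle w,\gamma(\theta_0)\rangle=y_0\}$, where $\gamma(\theta)=\frac{1}{\sqrt{2}}(\cos\theta,\sin\theta,1)$, and by the definitions of Section~\ref{notation} this correspondence takes $(\delta,\beta,K)$-sets of sine waves to $(\delta,\beta,K)$-sets of balls and $(\delta,\alpha,K)$-sets of discs to $(\delta,\alpha,K)$-sets of light slabs. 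I would use the slab picture for \eqref{trilinearkakeya} and the sine wave picture for \eqref{trilinearrestriction}.

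For \eqref{trilinearkakeya}, write $I_{\kappa}(P,\mathbb{T})=\sum_{B\in P}N_B^{1/3}$, where $N_B$ denotes the number of $\kappa$-separated triples $(T_1,T_2,T_3)\in\mathbb{T}^3$ with $T_1\cap T_2\cap T_3\cap B\neq\emptyset$. First, Hölder's inequality gives $I_{\kappa}(P,\mathbb{T})\le|P|^{2/3}\big(\sum_{B}N_B\big)^{1/3}$, and $\sum_B N_B$ counts quadruples $(B,T_1,T_2,T_3)$ with $B\cap T_1\cap T_2\cap T_3\neq\emptyset$ and the $T_j$ pairwise $\kappa$-separated. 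Next, from the identity $|\det[\gamma(\theta_1),\gamma(\theta_2),\gamma(\theta_3)]|\gtrsim\prod_{i<j}|\sin\frac{\theta_i-\theta_j}{2}|$ I would deduce that a $\kappa$-separated triple of slabs has $\kappa^{O(1)}$-transverse normals, so that $T_1\cap T_2\cap T_3$ lies in a ball of radius $\lesssim\kappa^{-O(1)}\delta$; the $(\delta,\beta,K_{\beta,P,\delta})$-set property of $P$ then bounds the number of $B\in P$ meeting a fixed such triple by $\kappa^{-O(1)}K_{\beta,P,\delta}$. Hence $\sum_B N_B\le\kappa^{-O(1)}K_{\beta,P,\delta}|\mathbb{T}|^3$, which gives \eqref{trilinearkakeya}. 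The dual (sine wave, disc) version is the same, using that each sine wave is $O(1)$-Lipschitz so that $\kappa$-separated discs meeting a common sine wave have $\gtrsim\kappa$-separated $\theta$-coordinates.

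For \eqref{trilinearrestriction} I would induct on $\delta$, the case $\delta\sim1$ being trivial, and after the usual Hölder reductions assume the balls of $P$ lie in a fixed ball. For each arc $\tau$ of length $\K^{-1}$ set $f_{\tau}=\sum_{T\in\mathbb{T}_{\tau}}\chi_T$ with $\chi_T$ a smooth bump adapted to the truncated light slab dual to $T$; since the Fourier transform of each such bump concentrates on a tube around the line $\R\gamma(\theta_T)$, the function $f_{\tau}$ has Fourier transform essentially supported in an $O(1)$-neighbourhood of the two $\tau$-caps (one on each sheet) of the light cone $\{r\gamma(\theta)\}$. As in Theorem~\ref{trilinearincidencecircle} I would first reduce $I_{A,\K}^{\broad}(P,\mathbb{T})$ to $\delta^{-O(\epsilon^2)}\delta^{-3}\sum_{B\in P}\inf_{(\theta_1,\dots,\theta_A)}\sup_{\dist(\tau,\theta_a)\ge\K^{-1}\,\forall a}\int_{B(z_B,\delta^{1+O(\epsilon^2)})}|f_{\tau}|$, where $z_B$ is the parameter point of the sine wave $B$, and then split $f_{\tau}=f_{\tau,h}+f_{\tau,l}$ at frequency $\delta^{\epsilon^2-1}$. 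In the low case $f_{\tau,l}$ is essentially constant at scale $S^{-1}:=\delta^{\epsilon^2+\epsilon^4}$, so replacing discs by their $S$-dilates and passing to scale $\delta S$ costs a factor $S^{-1}\delta^{-O(\epsilon^4)}$ and reduces matters to $I_{A-4,\K}^{\broad}(P_S,\mathbb{T}_S)$ at scale $\delta S$; applying the inductive hypothesis together with $K_{\beta,P_S,\delta S}\le S^{\beta}K_{\beta,P,\delta}$ and $K_{\alpha,\mathbb{T}_S,\delta S}\le S^{\alpha}K_{\alpha,\mathbb{T},\delta}$ leaves a power $S^{\alpha/2+\beta/3-3/2}$, which is non-positive exactly when $3\alpha+2\beta\le9$ (the hypothesis $A\ge|\log\delta|$ ensures $A-4\ge|\log(\delta S)|$). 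In the high case I would run the pigeonholing of Theorem~\ref{trilinearincidencecircle}: first reduce to a single cone sheet, then extract for each $B$ a $\gtrsim\K^{-1}$-separated triple $\tau_1,\tau_2,\tau_3$ dominating the broad quantity in the sense that $\inf_{(\theta_1,\theta_2)}\sup_{\dist(\tau,\theta_a)\ge\K^{-1}\,\forall a}\int_{B}|f_{\tau,h}|\lesssim\prod_{j=1}^{3}\big(\int_{B}|f_{\tau_j,h}|\big)^{1/3}$, pigeonhole the $\lesssim\K^3$ triples to a single one, sum over $B$ using Hölder and the $(\delta,\beta,K)$-set property of $P$, use that the $|f_{\tau_j,h}|$ are locally constant at scale $\delta$ (after convolution with an $L^1$-normalised bump adapted to the $\delta^{-1}$-ball containing their Fourier supports) to bring the product inside the integral, rescale by $\delta$ so that each $f_{\tau_j,h}(\delta\cdot)$ becomes, up to negligible errors, a cone extension $Eg_j$ supported on a single $\K^{-1}$-cap with $\|g_j\|_2$ controlled via the $(\delta,\alpha,K)$-set property of $\mathbb{T}$, and apply \cite[Theorem~2.1]{leevargas} to the three angularly separated caps. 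The bookkeeping should produce $\delta^{-1/2-O(\epsilon^2)}\K^{O(1)}K_{\beta,P,\delta}^{1/3}K_{\alpha,\mathbb{T},\delta}^{1/2}|P|^{2/3}|\mathbb{T}|^{1/2}$, which is \eqref{trilinearrestriction}, the $\K^{100}$ in the statement absorbing all the $\K^{O(1)}$ losses.

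The main obstacle will be the high case, and specifically making rigorous the step that treats $\prod_j|f_{\tau_j,h}|$ as constant on each $\delta$-ball $B$, moves it under the integral, and applies trilinear cone restriction; as in Theorem~\ref{trilinearincidencecircle} this is handled via the locally-constant property forced by the bounded Fourier support, followed by a change of variables turning the rescaled functions into cone extension operators. A second delicate point is the compatibility of the high--low cutoff with the $\K^{-1}$-angular localisation: one must check that restricting $\mathbb{T}$ to $\mathbb{T}_\tau$ genuinely confines the Fourier support of $f_{\tau,h}$ to the $\tau$-cap of the truncated cone, so that the hypotheses of \cite[Theorem~2.1]{leevargas} are met. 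Here I expect the sine wave picture to be cleaner than the circular one: the slabs are flat, so the cone structure is exact and no stationary-phase expansion of a curved measure (the $\sigma^{(1)},\sigma^{(2)},\sigma^{(3)}$ decomposition of Theorem~\ref{trilinearincidencecircle}) is needed --- only the two-sheet splitting of the cone. The remaining ingredients --- the Hölder reductions, the low-case induction, and the $\epsilon$-bookkeeping --- should run parallel to Theorem~\ref{trilinearincidencecircle}.
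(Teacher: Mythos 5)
Your proposal is correct and follows essentially the same route as the paper: for \eqref{trilinearkakeya} the paper also combines H\"older with the transversality of $\kappa$-separated light-slab triples (confining $T_1\cap T_2\cap T_3$ to a $\kappa^{-O(1)}\delta$-ball) and the Katz--Tao condition on $P$, merely phrased as an integral $\delta^{-3}\int(\sum_B\chi_{10B})(\sum\chi_{T_1}\chi_{T_2}\chi_{T_3})^{1/3}$ rather than your discrete quadruple count; for \eqref{trilinearrestriction} the paper runs exactly your scheme (high--low splitting, rescaled induction in the low case, broad-to-trilinear pigeonholing, the locally constant reduction, and Lee--Vargas trilinear cone restriction), working directly in the slab/ball picture where, as you anticipate, no stationary-phase decomposition of a curved measure is needed.
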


\begin{proof} Let $P$ be a $(\delta, \beta, K_{\beta,P,\delta})$-set of $\delta$-balls in $\mathbb{R}^3$, and let $\mathbb{T}$ be a set of $\delta$-slabs in $\mathbb{R}^3$. By definition, 
\[I_{\kappa}(P, \mathbb{T}) \lesssim \delta^{-3} \int \left( \sum_{B \in P } \chi_{10B} \right) \left( \sum_{\substack{T_1,T_2, T_3 \in \mathbb{T}\\ \min_{i \neq j} \left\lvert \theta(T_i)-\theta(T_j) \right\rvert  \geq \kappa }} \chi_{T_1} \chi_{T_2} \chi_{T_3} \right)^{1/3}. \]
By Hölder's inequality, 
\[ I_{\kappa}(P, \mathbb{T}) \lesssim \delta^{-3} \left( \delta^3 |P| \right)^{2/3} K_{\beta,P,\delta}^{1/3} \left(\int  \sum_{\substack{T_1,T_2, T_3 \in \mathbb{T}\\ \min_{i \neq j} \left\lvert \theta(T_i)-\theta(T_j) \right\rvert  \geq \kappa }} \chi_{T_1} \chi_{T_2} \chi_{T_3} \right)^{1/3}. \]
If $T_1,T_2, T_3 \in \mathbb{T}$ are such that $\prod_{i \neq j} \left\lvert \theta(T_i)-\theta(T_j) \right\rvert \geq \kappa$, then $T_1 \cap T_2 \cap T_3$ is contained\footnote{This is the sine wave analogue of the geometric observation in \cite{liu} that any three $\sim 1$-separated $\delta$-discs in the plane uniquely determine a circle (up to an error $\delta$) passing through them. This observation was used in \cite{liu} to prove the same lower bound for circular Furstenberg sets as what will follow from \eqref{trilinearkakeya} for sine wave Furstenberg sets.} in a ball of radius $\lesssim_{\kappa} \delta$. Applying this\footnote{This elementary trilinear Kakeya-type inequality has been observed before, see e.g.~\cite[Theorem~11.8]{demeter}.} to the above gives 
\[ I_{\kappa}(P, \mathbb{T}) \lesssim_{\kappa} \delta^{-3} \left( \delta^3 |P| \right)^{2/3} K_{\beta,P,\delta}^{1/3} \delta \lvert \mathbb{T} \rvert =|P|^{2/3} K_{\beta,P,\delta}^{1/3} \lvert \mathbb{T} \rvert. \]
This proves \eqref{trilinearkakeya}. 

Now suppose additionally that $\mathbb{T}$ is a $(\delta, \alpha, K_{\alpha, \mathbb{T}, \delta})$-set of $\delta$-slabs in $\mathbb{R}^3$, and that $3\alpha + 2\beta \leq 9$. Let $\epsilon >0$, $A \geq |\log \delta|$, and $\K \geq 1$. We can assume that $\K \leq \delta^{-1/2}$, since otherwise \eqref{trilinearrestriction} is trivial. Assume that $P$ is contained in a fixed ball of radius 1, which we can assume by translation invariance is the unit ball. We will prove \eqref{trilinearrestriction} by induction on $\delta$. The base case $\delta \sim 1$ follows from the trivial bound $I(P, \mathbb{T}) \leq |P||\mathbb{T}|$. By definition, 
\begin{multline} \label{beforehighlow} I_{A,\K}^{\broad}(P, \mathbb{T}) \lesssim \delta^{-3-30\epsilon^2} \sum_{B \in P} \\
\inf_{(V_1,\dotsc, V_A) \in G(1,3) } \sup_{d(\tau, V_a) \geq \K^{-1} \forall \, a } \int_{B\left(x_B, \delta^{1+10\epsilon^2}\right)} \left\lvert \sum_{T \in \mathbb{T}_{\tau} } \chi_{10T} \right\rvert, \end{multline}
where $x_B$ is the centre of $B$, the $\chi_{10T}$ are smooth cutoffs adapted to slabs of dimensions $\sim 1 \times 1 \times \delta$ (here we use that the balls in $P$ are contained a ball of radius 1). Let $\phi$ be a smooth bump function equal to 1 on $B_3\left(0, \delta^{-1-\epsilon^2}\right) \setminus B_3\left(0, \delta^{\epsilon^2-1}\right)$,  vanishing outside $B_3\left(0, 2\delta^{-1-\epsilon^2} \right)$ and vanishing in $B_3\left(0, \delta^{\epsilon^2-1}/2\right)$. Let $\psi$ be a smooth bump function equal to $1-\phi$ in $B_3\left(0, \delta^{\epsilon^2-1} \right)$ and extended by zero outside this ball. By \eqref{beforehighlow}, we have (apart from a negligible error term) 
\begin{multline*} I_{A,\K}^{\broad}(P, \mathbb{T}) \lesssim \delta^{-3-30\epsilon^2} \sum_{B \in P} \inf_{(V_1,\dotsc, V_A) \in G(1,3) } \sup_{d(\tau, V_a) \geq \K^{-1} \forall \, a } \\
\left(\int_{B\left(x_B, \delta^{1+10\epsilon^2}\right)} \left\lvert \sum_{T \in \mathbb{T}_{\tau} } \chi_{10T}  \ast \widecheck{\phi}\right\rvert + \int_{B\left(x_B, \delta^{1+10\epsilon^2}\right)} \left\lvert \sum_{T \in \mathbb{T}_{\tau} } \chi_{10T}  \ast \widecheck{\psi}\right\rvert \right).\end{multline*}
This implies that
\begin{multline} \label{highlowdecomp} I_{A,\K}^{\broad}(P, \mathbb{T}) \lesssim \delta^{-3-30\epsilon^2} \sum_{B \in P} \\\inf_{(V_1,V_2) \in G(1,3) } \sup_{d(\tau, V_a) \geq \K^{-1} \forall \, a \in \{1,2\} } \int_{B\left(x_B, \delta^{1+10\epsilon^2}\right)} \left\lvert \sum_{T \in \mathbb{T}_{\tau} } \chi_{10T}  \ast \widecheck{\phi}\right\rvert \\
+ \delta^{-3-30\epsilon^2} \sum_{B \in P} \\\inf_{(V_3, \dotsc, V_A) \in G(1,3) } \sup_{d(\tau, V_a) \geq \K^{-1} \forall \, a \in \{3, \dotsc, A\} }  \int_{B\left(x_B, \delta^{1+10\epsilon^2}\right)} \left\lvert \sum_{T \in \mathbb{T}_{\tau} } \chi_{10T}  \ast \widecheck{\psi}\right\rvert.\end{multline}
The high case is when the first term dominates, and the low case is when the second term dominates.  In the low case, let $S = \delta^{-\epsilon^2-\epsilon^4}$. Then, using
\[ \left\lVert \chi_{10T} \ast \widecheck{\psi} \right\rVert_{\infty} \leq \left\lVert \widehat{ \chi_{10T}} \psi \right\rVert_{1} \lesssim \delta^{\epsilon^2}, \]
and that $\widecheck{\psi}$ is negligible outside $B_3(0, \delta S)$, we have that (apart from a negligible error term)
\[ \left\lvert\chi_{10T} \ast \widecheck{\psi} \right\rvert \lesssim \delta^{\epsilon^2} \chi_{ST}, \]
where $ST$ is the slab $T$ thickened by the factor $S$ (this is where we use that the $\chi_{10T}$ are smooth). Hence, apart from a negligible error term, 
\[ I_{A,\K}^{\broad}(P, \mathbb{T}) \lesssim  \delta^{-\epsilon^4} S^{-1} \sum_{B \in P} \inf_{(V_3, \dotsc, V_A) \in G(1,3) } \sup_{d(\tau, V_a) \geq \K^{-1} \forall \, a \in \{3, \dotsc, A\} }  I\left( \mathbb{T}_{S, \tau}, B \right). \]
 where the balls in $P_S$ are the balls from $P$ scaled by $S$, $\mathbb{T}_S$ consists of the $\delta S$ neighbourhoods of the planes defining the slabs in $\mathbb{T}$. This gives
 \begin{equation} \label{inductive} I_{A, \K}^{\broad}(P, \mathbb{T} ) \lesssim S^{-1} \delta^{-\epsilon^4} I_{A-2, \K}^{\broad} \left(P_S, \mathbb{T}_S \right). \end{equation}
The assumption $A \geq |\log \delta|$ implies that $A-2 \geq |\log \delta S |$ provided that $\delta$ is sufficiently small (depending only on $\epsilon$, the remaining cases being handled by the constant $C_{\epsilon}$ in the theorem statement).  Since 
\[ K_{\beta, P_S, \delta S} \lesssim S^{\beta} K_{\beta,P,\delta}, \qquad K_{\alpha, \mathbb{T}_S, \delta} \lesssim S^{\alpha} K_{\alpha, \mathbb{T}, \delta}, \]
 we can apply induction to \eqref{inductive} to get 
\[ I_{A,\K}^{\broad}(P, \mathbb{T}) \lesssim S^{\frac{\alpha}{2} + \frac{\beta}{3} - \frac{3}{2} - \epsilon} \delta^{-\epsilon^4} C_{\epsilon} \K^{100} \delta^{-\epsilon - 1/2} K_{\beta,P,\delta}^{1/3} K_{\alpha, \mathbb{T}, \delta}^{1/2} |P|^{2/3} \left\lvert \mathbb{T} \right\rvert^{1/2}. \]
The exponent of $S$ is negative by the assumption that $3\alpha+2\beta \leq 9$, so this shows that \eqref{trilinearrestriction} holds in the low case. 

Now suppose we are in the high case, so that the first part of \eqref{highlowdecomp} dominates:
\begin{multline} \label{star} I_{A,\K}^{\broad}(P, \mathbb{T}) \lesssim \delta^{-3-30\epsilon^2} \sum_{B \in P} \\\inf_{(V_1,V_2) \in G(1,3) } \sup_{d(\tau, V_a) \geq \K^{-1} \forall \, a \in \{1,2\} } \int_{B\left(x_B, \delta^{1+10\epsilon^2}\right)} \left\lvert \sum_{T \in \mathbb{T}_{\tau} } \chi_{10T}  \ast \widecheck{\phi}\right\rvert.\end{multline}
For each $B \in P$, we claim that there exists a triple $(\tau_1, \tau_2, \tau_3)$, depending on $B$, with $\dist(\tau_i, \tau_j) \gtrsim \K^{-1}$ for all $i \neq j$, and such that
\begin{multline} \label{broadtrilinear} \inf_{(V_1,V_2) \in G(1,3) } \sup_{d(\tau, V_a) \geq \K^{-1} \forall \, a \in \{1,2\} } \int_{B\left(x_B, \delta^{1+10\epsilon^2}\right)} \left\lvert \sum_{T \in \mathbb{T}_{\tau} } \chi_{10T}  \ast \widecheck{\phi}\right\rvert\\
\lesssim \prod_{j=1}^3 \left(\int_{B\left(x_B, \delta^{1+10\epsilon^2}\right)} \left\lvert \sum_{T \in \mathbb{T}_{\tau_j} } \chi_{10T} \ast \widecheck{\phi} \right\rvert \right)^{1/3}. \end{multline}
To see this, let $(V_1, V_2)$ attain the infimum in the left-hand side in \eqref{broadtrilinear}. Let $\tau_1$ attain the inner supremum,  so that $\dist(\tau_1,V_j) \geq \K^{-1}$ for $j \in \{1,2\}$ and
\begin{multline*} \sup_{d(\tau, V_a) \geq \K^{-1} \forall \, a \in \{1,2\} } \int_{B\left(x_B, \delta^{1+10\epsilon^2}\right)} \left\lvert \sum_{T \in \mathbb{T}_{\tau} } \chi_{10T}  \ast \widecheck{\phi}\right\rvert  \\
= \int_{B\left(x_B, \delta^{1+10\epsilon^2}\right)}\left\lvert \sum_{T \in \mathbb{T}_{\tau_1} } \chi_{10T}  \ast \widecheck{\phi}\right\rvert. \end{multline*}
There must exist $\tau_2$ with $\dist(\tau_2, \tau_1) \gtrsim \K^{-1}$ and 
\[ \int_{B\left(x_B, \delta^{1+10\epsilon^2}\right)} \left\lvert \sum_{T \in \mathbb{T}_{\tau_2} } \chi_{10T} \ast \widecheck{\phi} \right\rvert \gtrsim \int_{B\left(x_B, \delta^{1+10\epsilon^2}\right)} \left\lvert \sum_{T \in \mathbb{T}_{\tau_1} } \chi_{10T} \ast \widecheck{\phi} \right\rvert, \]
since otherwise we could take $W_1$ and $W_2$ to be the lines through the centre of $\tau_1$, and get 
\begin{multline} \label{contradiction} \inf_{W_1,W_2 \in G(1,3) } \sup_{d(\tau, W_j) \geq \K^{-1} \forall j \in \{1,2\} } \int_{B\left(x_B, \delta^{1+10\epsilon^2}\right)} \left\lvert \sum_{T \in \mathbb{T}_{\tau} } \chi_{10T}  \ast \widecheck{\phi}\right\rvert \\
\ll \int_{B\left(x_B, \delta^{1+10\epsilon^2}\right)} \left\lvert \sum_{T \in \mathbb{T}_{\tau_1} } \chi_{10T}  \ast \widecheck{\phi}\right\rvert, \end{multline}
which is a contradiction since both sides of \eqref{contradiction} are equal by the definition of $\tau_1$. Similarly, there must exist $\tau_3$ with $\dist(\tau_3, \tau_1) \gtrsim \K^{-1}$, $\dist(\tau_3, \tau_2) \gtrsim \K^{-1}$, and 
\[ \int_{B\left(x_B, \delta^{1+10\epsilon^2}\right)} \left\lvert \sum_{T \in \mathbb{T}_{\tau_3} } \chi_{10T} \ast \widecheck{\phi} \right\rvert \gtrsim \int_{B\left(x_B, \delta^{1+10\epsilon^2}\right)} \left\lvert \sum_{T \in \mathbb{T}_{\tau_1} } \chi_{10T} \ast \widecheck{\phi} \right\rvert; \]
since otherwise we could take $W_1,W_2$ to be the lines through the centres of $\tau_1$ and $\tau_2$ in \eqref{contradiction}. This verifies \eqref{broadtrilinear}, and therefore by \eqref{star}, 
\[ I_{A,\K}^{\broad}(P, \mathbb{T}) \lesssim \delta^{-3-30\epsilon^2} \sum_{B \in P} \prod_{j=1}^3 \left(\int_{B\left(x_B, \delta^{1+10\epsilon^2}\right)} \left\lvert \sum_{T \in \mathbb{T}_{\tau_j(B)} } \chi_{10T}  \ast \widecheck{\phi} \right\rvert\right)^{1/3}.\]
The number of distinct triples $(\tau_1(B), \tau_2(B), \tau_3(B))$ as $B$ varies over $P$ is $\lesssim \K^3$, so by pigeonholing we can find a fixed triple $(\tau_1, \tau_2, \tau_3)$, such that 
\begin{equation} \label{allP} I_{A,\K}^{\broad}(P, \mathbb{T}) \lesssim \delta^{-3-30\epsilon^2}  \K^3 \sum_{B \in P} \left( \prod_{j=1}^3  \int_{B\left(x_B, \delta^{1+10\epsilon^2}\right)} \left\lvert \sum_{T \in \mathbb{T}_{\tau_j} } \chi_{10T}   \ast \widecheck{\phi}  \right\rvert\right)^{1/3}.\end{equation}

 For each $j \in \{1,2,3\}$, let $f_j = \sum_{T \in \mathbb{T}_{\tau_j} } \chi_{10T}   \ast \widecheck{\phi}$, so that
\[ f_j = f_j \ast \widecheck{\eta}, \]
where $\eta$ is a smooth bump function supported in a ball of radius $\delta^{-1-10\epsilon^2}$, obtained by rescaling a bump function on the unit ball independent of $\delta$. Therefore,
\begin{equation} \label{reproducingformula} |f_j| \lesssim |f_j| \ast \zeta, \end{equation} where 
\[ \zeta(x) = \frac{\delta^{-3-30\epsilon^2}}{1 + \delta^{-4(1+10\epsilon^2 ) } |x|^4 }. \]
 The function $\zeta$ is positive and locally constant on balls of radius $\delta^{1+10\epsilon^2}$ up to a factor $\sim 1$, meaning that $\zeta(x) \lesssim \zeta(y)$ when $|x-y| \leq \delta^{1+10\epsilon^2}$ (this is easy to show by rescaling and considering the case $\delta=1$).  Since the locally constant property is preserved by convolution, $ |f_j| \ast \zeta$ is also locally constant on balls of radius $\delta^{1+10\epsilon^2}$, up to a factor $\sim 1$. By a change of variables, 
\begin{equation} \label{zetabounds} \left\lVert \zeta \right\rVert_1 \sim 1. \end{equation} 
By \eqref{reproducingformula} and the locally constant property, applied to \eqref{allP},
\[ I_{A,\K}^{\broad}(P, \mathbb{T}) \lesssim \delta^{-3-30\epsilon^2}  \K^3 \sum_{B \in P} \int_{B} \prod_{j=1}^3 \left(|f_j| \ast \zeta \right)^{1/3}. \]
By Hölder's inequality,
\begin{multline} \label{incidencetrilinear} I_{A,\K}^{\broad}(P, \mathbb{T}) \lesssim \delta^{-3-30\epsilon^2}  \K^3  \left( \delta^3 |P| \right)^{2/3} K_{\beta,P,\delta}^{1/3} \\ \left(\iiint  \zeta(y_1)\zeta(y_2)\zeta(y_3) \int_{B_3(0,2)} \prod_{j=1}^3 |f_j(x-y_j)| \, dx \, dy_1 \, dy_2 \, dy_3 \right)^{1/3}, \end{multline}
where we used the assumption that the balls in $P$ are contained in the unit ball to restrict the domain of integration to $B_3(0,2)$. Fix $y_1, y_2, y_3$ and let $F_j(x) = f_j(\delta x-  y_j)$ for $j \in \{1,2,3\}$. The trilinear Fourier restriction theorem for the cone in $\mathbb{R}^3$ implies that (apart from a negligible error term which we can ignore)
\begin{equation} \label{leevargaseq} \left\lVert \prod_{j=1}^3 F_j \right\rVert_{L^1(B_3(0, 2\delta^{-1} ) ) } \lesssim  \K^{O(1)}\delta^{\frac{3}{2} - O(\epsilon^2) } \prod_{j=1}^3 \|F_j\|_2; \end{equation}
see e.g.~\cite[Eq.~5]{leevargas} for the particular version of trilinear cone restriction that we are using. By rescaling \eqref{leevargaseq}, we get
\[ \left\lVert \prod_{j=1}^3 f_j(\cdot - y_j) \right\rVert_{L^1(B_3(0, 2 ) ) } \lesssim \K^{O(1)} \delta^{- O(\epsilon^2) } \prod_{j=1}^3 \|f_j\|_2. \]
Substituting the above into \eqref{incidencetrilinear} and using \eqref{zetabounds} gives
\begin{multline*} I_{A,\K}^{\broad}(P, \mathbb{T}) \lesssim \K^{O(1)}\delta^{-3-O(\epsilon^2)}   \left( \delta^3 |P| \right)^{2/3} K_{\beta,P,\delta}^{1/3}  K_{\alpha,\mathbb{T},\delta}^{1/2} \left( \delta \left\lvert \mathbb{T} \right\rvert\right)^{1/2} \\
 = \K^{O(1)}\delta^{-\frac{1}{2}-O(\epsilon^2)}   K_{\beta,P,\delta}^{1/3}  K_{\alpha,\mathbb{T},\delta}^{1/2}   |P|^{2/3} \left\lvert \mathbb{T} \right\rvert^{1/2}.\end{multline*}
 This finishes the proof of \eqref{trilinearrestriction} in the high case (the implicit constant in $O(1)$ is smaller than 100). \end{proof}

\subsection{Cinematic curve incidences via variable coefficient local smoothing}
In this subsection, we prove the incidence bound corresponding to \eqref{bound2}. We will in fact present the proof in the special case of circles and then explain the methods for generalising both \eqref{bound1} and \eqref{bound2} to cinematic curves (which includes sine waves and circles).

The incidence bound below for circles gives weaker results for circular Furstenberg sets than the combination of Theorem~\ref{theoremincidencecircle} and Theorem~\ref{trilinearincidencecircle}, but it has a simple proof and straightforward generalisation to cinematic curves, and in this level of generality there are some ranges for which it gives the best bounds for cinematic Furstenberg sets. 
\begin{theorem} \label{theoremincidencecircleL2} Let $0 \leq \beta \leq 3$, $0 \leq \alpha \leq 2$, and $0 < \delta < 1$. Let $P$ be a $\left( \delta, \beta, K_{\beta,P,\delta}\right)$-set of $\delta$-neighbourhoods of circles in the plane, of radii between 1 and 2. Let $\mathbb{T}$ be a $\left( \delta, \alpha, K_{\alpha, \mathbb{T}, \delta}\right)$-set of $\delta$-discs in the plane.  If $\alpha + \beta \leq 4$, then for any $\epsilon >0$, 
\begin{equation} \label{incidenceboundcircleL2} I(P, \mathbb{T}) \leq C_{\epsilon} \delta^{-\epsilon} \delta^{-1}  K_{\beta,P,\delta}^{1/2} K_{\alpha,\mathbb{T},\delta}^{1/2}  |P|^{1/2} |\mathbb{T}|^{1/2} .  \end{equation}
If $\alpha + \beta > 4$, then for any $\epsilon >0$, 
\begin{equation} \label{incidenceboundcircleL217} I(P, \mathbb{T}) \leq C_{\epsilon} \delta^{-\epsilon} \delta^{-2\lambda},  K_{\beta,P,\delta}^{\lambda} K_{\alpha,\mathbb{T},\delta}^{\lambda} |P|^{1-\lambda} |\mathbb{T}|^{1-\lambda}  \end{equation}
where $\lambda = \frac{1}{\alpha+\beta-2} < 1/2$. 
\end{theorem}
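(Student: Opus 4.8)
The plan is to mimic the proof of Theorem~\ref{theoremincidencecircle} almost verbatim, the only change being that the high case is now handled by a trivial $L^2$ (Plancherel) estimate in place of the Guth--Wang--Zhang local smoothing inequality. As in that proof, Hölder's inequality, the restriction on the radii, and scaling let us assume that the $\delta$-discs in $\mathbb{T}$ all lie in $B_2(0,1)$ and that the circles in $P$ have radii in $[1.1,1.9]$, and we induct on $\delta$, the base case $\delta\sim1$ being the trivial bound $I(P,\mathbb{T})\leq|P||\mathbb{T}|$. We first record that \eqref{incidenceboundcircleL2} implies \eqref{incidenceboundcircleL217} when $\alpha+\beta>4$: since $\lambda=(\alpha+\beta-2)^{-1}<1/2$ in that range, one simply interpolates \eqref{incidenceboundcircleL2} raised to the power $2\lambda$ against the trivial bound $I(P,\mathbb{T})\leq|P||\mathbb{T}|$ raised to the power $1-2\lambda$. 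Writing $\sigma_t$ for the uniform probability measure on the circle of radius $t$ about the origin and $f=\sum_{T\in\mathbb{T}}\chi_{10T}$, the same elementary manipulation as in the proof of Theorem~\ref{theoremincidencecircle} gives $I(P,\mathbb{T})\lesssim\delta^{-4}\int\sum_{B\in P}\chi_{B((x_B,t_B),\delta)}(\sigma_t\ast f)(x)\,dx\,dt$, and we perform the identical high--low splitting $f=f_h+f_l$ (up to a negligible error), where $\widehat{f_h}=\widehat{f}\,\phi$ is supported in $\{\,|\xi|\gtrsim\delta^{\epsilon^2-1}\,\}$. Unlike in Theorem~\ref{theoremincidencecircle} there is no need to decompose $\widehat{\sigma_t}$ further, since we will only use the crude bound $|\widehat{\sigma_t}(\xi)|=|\widehat{\sigma}(t\xi)|\lesssim(1+|\xi|)^{-1/2}$ for $t\in[1,2]$.

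In the high case I would apply Cauchy--Schwarz, using that the $\mathbb{R}^3$-balls $B((x_B,t_B),\delta)$ have overlap $\lesssim K_{\beta,P,\delta}$ (the $(\delta,\beta,K_{\beta,P,\delta})$-set property at radius $r=\delta$), to obtain
\[ I(P,\mathbb{T})\lesssim\delta^{-4}\bigl(|P|\delta^3\bigr)^{1/2}K_{\beta,P,\delta}^{1/2}\Bigl(\int_1^2\int_{\mathbb{R}^2}|(\sigma_t\ast f_h)(x)|^2\,dx\,dt\Bigr)^{1/2}. \]
By Plancherel, the decay of $\widehat{\sigma}$, and the Fourier support of $f_h$,
\[ \int_1^2\int_{\mathbb{R}^2}|(\sigma_t\ast f_h)(x)|^2\,dx\,dt\lesssim\int_{|\xi|\gtrsim\delta^{\epsilon^2-1}}\frac{|\widehat{f_h}(\xi)|^2}{|\xi|}\,d\xi\lesssim\delta^{1-\epsilon^2}\|f\|_2^2, \]
and $\|f\|_2^2=\bigl\|\sum_{T\in\mathbb{T}}\chi_{10T}\bigr\|_2^2\lesssim K_{\alpha,\mathbb{T},\delta}|\mathbb{T}|\delta^2$ by the $(\delta,\alpha,K_{\alpha,\mathbb{T},\delta})$-set property (the discs $10T$ have overlap $\lesssim K_{\alpha,\mathbb{T},\delta}$). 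Multiplying these out gives $I(P,\mathbb{T})\lesssim\delta^{-1-O(\epsilon^2)}K_{\beta,P,\delta}^{1/2}K_{\alpha,\mathbb{T},\delta}^{1/2}|P|^{1/2}|\mathbb{T}|^{1/2}$, which is \eqref{incidenceboundcircleL2}, and hence \eqref{incidenceboundcircleL217} when $\alpha+\beta>4$. This step uses neither the range hypothesis nor induction.

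The low case is treated by induction on $\delta$ exactly as in the proof of Theorem~\ref{theoremincidencecircle}. Writing $f_l=\sum_T\chi_{10T}\ast\widecheck{\psi}$ and $S=\delta^{-\epsilon^2-\epsilon^4}$, one has $|\chi_{10T}\ast\widecheck{\psi}|\lesssim\delta^{-O(\epsilon^4)}S^{-2}\chi_{ST}$ up to a negligible error, which reduces the low term to $\lesssim S^{-1}\delta^{-O(\epsilon^4)}I(P_S,\mathbb{T}_S)$, where $P_S$ (resp.\ $\mathbb{T}_S$) consists of the $\delta S$-neighbourhoods of the circles in $P$ (resp.\ the discs in $\mathbb{T}$ dilated by $S$). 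Applying the inductive hypothesis at scale $\delta S$ and using $K_{\beta,P_S,\delta S}\leq S^\beta K_{\beta,P,\delta}$, $K_{\alpha,\mathbb{T}_S,\delta S}\leq S^\alpha K_{\alpha,\mathbb{T},\delta}$, $|P_S|=|P|$, $|\mathbb{T}_S|=|\mathbb{T}|$, the resulting estimate carries an extra factor $S^{(\alpha+\beta)/2-2-\epsilon}$ when $\alpha+\beta\leq4$ and $S^{\lambda(\alpha+\beta-2)-1-\epsilon}=S^{-\epsilon}$ when $\alpha+\beta>4$; in both cases the exponent of $S$ is negative, which absorbs the $\delta^{-O(\epsilon^4)}$ loss and the implicit constants and closes the induction.

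I do not expect a serious obstacle here: the whole point is that the lossy $L^2$ estimate in the high case is already strong enough to sustain the induction, so local smoothing is unnecessary. The only real bookkeeping is checking that the exponents of $S$ in the low case come out non-positive in both ranges --- which is precisely where the hypotheses $\alpha+\beta\leq4$ and $\lambda=(\alpha+\beta-2)^{-1}$ enter --- together with choosing $\epsilon$ small enough that the various $O(\epsilon^2)$ and $O(\epsilon^4)$ losses are absorbed into $\delta^{-\epsilon}$.
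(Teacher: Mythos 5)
Your proposal is correct and follows essentially the same route as the paper: the identical high--low decomposition of $f$, Cauchy--Schwarz plus Plancherel with the decay $|\widehat{\sigma}(\xi)|\lesssim|\xi|^{-1/2}$ in the high case (in place of local smoothing), and the same rescaling-plus-induction treatment of the low case, with the exponents of $S$ checked exactly as in the paper. Your geometric-mean interpolation of \eqref{incidenceboundcircleL2} against the trivial bound is a perfectly adequate justification of the implication \eqref{incidenceboundcircleL2} $\Rightarrow$ \eqref{incidenceboundcircleL217}, which the paper asserts without detail.
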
 
\begin{proof} The setup of the proof is similar to the start of the proof of Theorem~\ref{theoremincidencecircle}.  We will use throughout the proof that \eqref{incidenceboundcircleL2} implies \eqref{incidenceboundcircleL217} when $\alpha + \beta > 4$. If we set 
\[ f = \sum_{T \in \mathbb{T}} \chi_{10T}, \]
where each $\chi_{10T}$ is a smooth bump function equal to 1 on $10T$ and vanishing outside $20T$, then
\[ I(P, \mathbb{T}) \lesssim \delta^{-4} \int \sum_{B \in P} \chi_{B\left(\left(x_B, t_B\right),\delta\right)} \left(\sigma_t \ast f\right)(x) \, dx \, dt. \]
Let $\phi$ be a smooth bump function equal to 1 on $B_2\left(0, \delta^{-\epsilon^2-1}\right) \setminus B_2\left(0, \delta^{\epsilon^2-1}\right)$, vanishing outside $B_2\left(0, 2\delta^{-\epsilon^2-1}\right)$ and vanishing in $B_2\left(0, \delta^{\epsilon^2-1}/2\right)$. Let $\psi$ be a smooth bump function supported equal to $1-\phi$ in $B_2\left(0, \delta^{\epsilon^2-1} \right)$ and extended by zero outside this ball. We define the ``high'' and ``low'' parts $f_h$, $f_l$ of $f$ by
\[ \widehat{f_h} = \widehat{f} \phi, \qquad \widehat{f_l} = \psi \widehat{f}. \]
Then
\begin{multline*} I(P, \mathbb{T}) \lesssim \delta^{-4} \int \sum_{B \in P} \chi_{B\left(\left(x_B, t_B\right),\delta\right)} \left\lvert \left(\sigma_t \ast f_h\right)(x)\right\rvert \, dx \, dt \\
+\delta^{-4} \int \sum_{B \in P} \chi_{B\left(\left(x_B, t_B\right),\delta\right)} \left\lvert \left(\sigma_t \ast f_l\right)(x)\right\rvert \, dx \, dt, \end{multline*}
where again we can assume that the negligible error terms do not dominate, as in this case the conclusion of the theorem follows. If we are in the high case, then by the Cauchy-Schwarz inequality,
\begin{equation} \label{latergeneralisation} I(P, \mathbb{T}) \lesssim \delta^{-4} \left( K_{\beta,P,\delta} |P| \delta^3 \right)^{1/2} \left(\int_{1/2}^{5/2} \int_{\mathbb{R}^2} \left\lvert \left(\sigma_{t} \ast f_h\right)(x) \right\rvert^2 \, dx \, dt\right)^{1/2}. \end{equation}
By the decay $|\widehat{\sigma}(\xi)| \lesssim |\xi|^{-1/2}$ (see \eqref{asymptotic})  and by Plancherel in the $x$-variable,
\begin{multline*} I(P, \mathbb{T})  \lesssim \delta^{-4} \left( K_{\beta,P,\delta} |P| \delta^3 \right)^{1/2} \delta^{1/2-O(\epsilon^2)} \left( \delta^2 K_{\alpha, \mathbb{T}, \delta} |\mathbb{T}| \right)^{1/2} \\
= \delta^{-1-O(\epsilon^2)} K_{\beta,P,\delta}^{1/2} K_{\alpha, \mathbb{T}, \delta}^{1/2} |P|^{1/2} |\mathbb{T}|^{1/2}. \end{multline*} This shows that \eqref{incidenceboundcircleL2} holds in the high case. Since  \eqref{incidenceboundcircleL2} implies \eqref{incidenceboundcircleL217} when $\alpha + \beta >4$, this proves the bound in the high case.

Now suppose we are in the low case, so that the second term dominates in \eqref{twoterms}:
\[ I(P, \mathbb{T}) \lesssim  \delta^{-4} \int \sum_{B \in P} \chi_{B\left(\left(x_B, t_B\right),\delta\right)} \left\lvert \left(\sigma_t \ast f_l\right)(x)\right\rvert \, dx \, dt. \]
This case is similar to the low case in the proof of Theorem~\ref{theoremincidencecircle}. Let $S = \delta^{-\epsilon^2-\epsilon^4}$, so that for any $(x,t) \in \mathbb{R}^2 \times [1,2]$ (ignoring negligible error terms), by the same justification as for \eqref{usedagainlater},
\[ (\sigma_t \ast f_l)(x) \lesssim S^{-2} \delta^{-O(\epsilon^4)} \sigma_t \ast \left( \sum_{T \in \mathbb{T}} \chi_{ST}\right)(x), \]
and so (assuming the negligible error terms do not dominate)
\begin{multline*} I(P, \mathbb{T}) \lesssim  S^{-2} \delta^{-4-O(\epsilon^4)} \int \sum_{B \in P} \chi_{B\left(\left(x_B, t_B\right),\delta\right)} \sigma_t \ast \left( \sum_{T \in \mathbb{T}} \chi_{ST}\right)(x) \, dx \, dt\\
\lesssim S^{-1} \delta^{-O(\epsilon^4)} I(P_S, \mathbb{T}_S), \end{multline*}
where $P_S$ is the set of $\delta S$ neighbourhoods of the same circles defining $P$, $\mathbb{T}_S$ is the set of discs from $\mathbb{T}$ with the same centres but with radii scaled by $S$. By induction, this gives
\begin{multline} \label{secondlastL2} I(P, \mathbb{T}) \lesssim \delta^{-O(\epsilon^4) }S^{-1} \times\\
\Bigg\{\begin{aligned} &(\delta S)^{-1-\epsilon}  K_{\alpha, \mathbb{T}_S, \delta S}^{1/2} K_{\beta,P_S,\delta S}^{1/2} |P_S|^{1/2} |\mathbb{T}_S|^{1/2} && \alpha + \beta \leq 4 \\
&(\delta S)^{-2\lambda-\epsilon} K_{\alpha, \mathbb{T}_S, \delta S}^{\lambda}  K_{\beta, P_S, \delta S}^{\lambda}|P_S|^{1-\lambda} |\mathbb{T}_S|^{1-\lambda} && \alpha + \beta> 4. \end{aligned} \end{multline}
By the inequalities
\[ K_{\alpha, \mathbb{T}_S, \delta S} \lesssim S^{\alpha} K_{\alpha, \mathbb{T}, \delta}, \qquad K_{\beta,P_S,\delta S} \lesssim S^{\beta} K_{\beta,P,\delta},\]
and since $|\mathbb{T}_S| = |\mathbb{T}|$ and $|P_S| = |P|$, 
\eqref{secondlastL2} becomes
\begin{multline*} I(P, \mathbb{T}) \lesssim \delta^{\epsilon^3-O(\epsilon^4) } \times\\
\Bigg\{\begin{aligned} & S^{-2 + \frac{\alpha}{2} + \frac{\beta}{2}} \delta^{-1-\epsilon}  K_{\alpha, \mathbb{T}, \delta}^{1/2} K_{\beta,P,\delta}^{1/2} |P|^{1/2} |\mathbb{T}|^{1/2} && \alpha + \beta \leq 4 \\
& S^{-1-2\lambda +\lambda \alpha + \lambda \beta}\delta^{-2\lambda-\epsilon} K_{\alpha, \mathbb{T}, \delta}^{\lambda}  K_{\beta,P,\delta}^{\lambda}|P|^{1-\lambda} |\mathbb{T}|^{1-\lambda} && \alpha + \beta >4. \end{aligned} \end{multline*}
In either case, the exponent of $S$ is non-positive, so the induction closes in the low case and this finishes the proof. \end{proof}

Below, we assume the definition of Fourier integral operator from the survey article \cite{beltranhickmansogge}. Suppose that $\{\Sigma_{x,t}\}_{x,t}$ is a family of smooth curves in the plane parametrised by $(x,t) \in \mathbb{R}^2 \times [1,2]$, such that 
\[ \Sigma_{x,t} = \left\{ y \in \mathbb{R}^2 : \Phi_t(x,y) = 0 \right\}, \]
for some smooth function $\Phi_t$ satisfying the ``Phong-Stein rotational curvature condition''
\begin{equation} \label{phongstein} \det\begin{pmatrix} \Phi_t & (\partial_y \Phi_t)^T \\
\partial_x \Phi_t & \partial_{xy} \Phi_t \end{pmatrix} \neq 0 \qquad \text{ when } \Phi_t = 0. \end{equation}
Let $\sigma_{x,t}$ be the measure $\frac{\mathcal{H}^1}{\left\lvert \partial_y \Phi_t\right\rvert}$ restricted to $\Sigma_{x,t}$, where $\mathcal{H}^1$ is the arc length measure when restricted to $\Sigma_{x,t}$. The denominator $\left\lvert \partial_y \Phi_t\right\rvert$ is nonvanishing on $\Sigma_{x,t}$ by the condition \eqref{phongstein}. The measure $\sigma_{x,t}$ is equivalent to
\[ \int f \, d\sigma_{x,t} = \lim_{\epsilon \to 0^+} \frac{1}{2\epsilon} \int_{|\Phi_t(x,y)| < \epsilon} f(y) \, dy, \]
for any continuous function $f$ of compact support; see \cite[p.~498]{stein2}.

 In~\cite[Example~2.14]{beltranhickmansogge} (for example) it is shown that if $a$ is a smooth function compactly supported in $\mathbb{R}^2 \times [1,2] \times \mathbb{R}^2$,
%
then the averaging operator
\begin{equation} \label{averaging} A_tf(x) = \int_{\Sigma_{x,t}} a(x,t,y) f(y) \, d\sigma_{x,t}(y) \end{equation}
is (for each fixed $t$) a Fourier integral operator of order $-1/2$. Rather than defining the cinematic curvature condition for Fourier integral operators, we will just use the characterisation from \cite[Theorem~2.1]{kung}, which says that if $\Phi_t(x,y) = y_2 - g(x,y_1,t)$ for some smooth function $g$, and if \eqref{phongstein} holds, then the Fourier integral operator in \eqref{averaging}, where $t$ is now a variable, satisfies the cinematic curvature condition if and only if
\begin{equation} \label{cinematic} \det\begin{pmatrix} g_{x_1} & g_{x_2} & g_t \\
 g_{x_1y_1} & g_{x_2y_1} & g_{ty_1} \\
  g_{x_1y_1y_1} & g_{x_2y_1y_1} & g_{ty_1y_1} \end{pmatrix} \neq 0. \end{equation}
The symbol $a$ does not affect the phase in the proof of Theorem~2.1 in \cite{kung} (see \cite[Example~2.14]{beltranhickmansogge}). Under the assumption that $\Phi_t(x,y) = y_2 - g(x,y_1,t)$, \eqref{phongstein} becomes  
\begin{equation} \label{phongstein2} g_{x_1} g_{x_2y_1} - g_{x_2} g_{x_1y_1} \neq 0. \end{equation}

In \cite[Example~2.3]{kung}, this characterisation is used to show that the circular averaging Fourier integral operators $f \mapsto (\sigma_t \ast f)(x)$ satisfy the cinematic curvature condition. 
  
In the case of sine waves, $g$ is the function 
  \[ g(x_1,x_2,y_1,t) = \frac{1}{\sqrt{2}}\left\langle (x_1,x_2,t), \left(\cos y_1, \sin y_1, 1 \right) \right\rangle. \]
  It is straightforward to check that
  \[ g_{x_1} g_{x_2y_1} - g_{x_2} g_{x_1y_1} =\frac{1}{\sqrt{2}}, \]
  which verifies \eqref{phongstein2}, and 
  \[ \det\begin{pmatrix} g_{x_1} & g_{x_2} & g_t \\
 g_{x_1y_1} & g_{x_2y_1} & g_{ty_1} \\
  g_{x_1y_1y_1} & g_{x_2y_1y_1} & g_{ty_1y_1} \end{pmatrix} = \frac{1}{2\sqrt{2}}, \]
 which verifies \eqref{cinematic}. Therefore, the averaging operator \eqref{averaging} arising from sine waves satisfies the cinematic curvature condition.

\begin{definition} \label{cinematicdefn} A family $\{\Sigma_{x,t}\}_{x,t}$ as above with $\Phi_t(x,y) = y_2 - g(x,y_1,t)$ is called a family of cinematic functions if \eqref{phongstein2} and \eqref{cinematic} both hold. A set $F \subseteq \mathbb{R}^2$ is called a cinematic $(u,v)$-Furstenberg set (with respect to the family of curves $\{\Sigma_{x,t}\}_{x,t}$) if there is a set $E \subseteq \mathbb{R}^2 \times [1,2]$ with $\dim E \geq v$, such that $\dim\left(\Sigma_{x,t} \cap F\right) \geq u$ for all $(x,t) \in E$.  \end{definition}

By the above discussion, the local smoothing inequality for Fourier integral operators of Gao, Liu, Miao, and Xi \cite[Theorem~1.4]{gaoliumiaoxi} (which generalises the Guth--Wang--Zhang local smoothing inequality for the wave equation) applies to the averaging operators \eqref{averaging} whenever $\{\Sigma_{x,t}\}_{x,t}$ is a family of cinematic functions. This can be used to generalise Theorem~\ref{theoremincidencecircle} and Theorem~\ref{theoremincidencecircleL2}, as follows. In the proof of Theorem~\ref{theoremincidencecircle}, if we replace the inequality at \eqref{referencedbelow}:
\[ \delta \lesssim \left(\sigma_t \ast \chi_T\right)(x), \]
with 
\[ \delta \lesssim \int \chi_T \, d\sigma_{x,t}, \]
for any $(x,t) \in B$, and then sum over those discs $T \in \mathbb{T}$ with $T \cap B \neq \emptyset$, integrate over $B$, and then sum over $B \in P$, we get
\[ I(P, \mathbb{T}) \lesssim \delta^{-4} \int_{\mathbb{R}^2 \times [1,2]} g \left( \int f\, d\sigma_{x,t}\right) \, dx \, dt \]
where 
\[ g = \sum_{B \in P} \chi_B, \qquad f = \sum_{T \in \mathbb{T}} \chi_T. \]
In the high case for the incidence bound analogous to Theorem~\ref{theoremincidencecircle}, we apply Hölder's inequality, and then the Gao--Liu--Miao--Xi local smoothing inequality for Fourier integral operators from \cite[Theorem~1.4]{gaoliumiaoxi} in place of the Guth--Wang--Zhang local smoothing inequality, which yields that
\begin{equation} \label{variablecoefficientaveraging} \left\lVert \int f\, d\sigma_{x,t} \right\rVert_{L^4_{x,t}(\mathbb{R}^2 \times [1,2] )} \leq C_{\epsilon} R^{\epsilon-\frac{1}{2}} \|f\|_4, \end{equation}
when $f$ is Fourier supported in $|\xi| \approx R$. The rest of the proof is similar to the circular case.

In the high case for the incidence bound analogous to Theorem~\ref{theoremincidencecircleL2}, we proceed as in the proof of Theorem~\ref{theoremincidencecircleL2}, except that at \eqref{latergeneralisation} we use the fixed time $L^2$ inequality for Fourier integral operators of order $-1/2$ (see, for example, \cite[Theorem~2.1]{beltranhickmansogge}) in place of Plancherel's theorem:
\[ \left\lVert \int f\, d\sigma_{x,t} \right\rVert_{L^2_{x}(\mathbb{R}^2)} \leq C_{\epsilon} R^{\epsilon-\frac{1}{2}} \|f\|_2, \]
when $f$ is Fourier supported in $|\xi| \approx R$. Again the rest of the proof is similar to the circular case. These two modifications yield the following.

\begin{theorem}  Let $\alpha \in [0, 2]$, $\beta \in [0,3]$, and $0 < \delta <1$. Let $\{\Sigma_{x,t}\}_{x,t}$ as above be a family of cinematic functions, where 
\[ \Sigma_{x,t} = \left\{ (y_1,g(x,y_1,t) ) : y_1 \in [0,1] \right\}, \]
and $g$ is smooth. Let $P$ be a $\left( \delta, \beta, K_{\beta,P,\delta}\right)$-set of $\delta$-neighbourhoods of curves $\Sigma_{x,t}$; meaning that the corresponding family of points $(x,t) \in \mathbb{R}^3$ form a $\left( \delta, \beta, K_{\beta,P,\delta}\right)$-set. Let $\mathbb{T}$ be a $\left( \delta, \alpha, K_{\alpha, \mathbb{T}, \delta}\right)$-set of $\delta$-discs in the plane. If $3\alpha + \beta \leq 7$, then for any $\epsilon >0$, 
\[ I(P, \mathbb{T}) \leq C_{\epsilon} \delta^{-\epsilon} \delta^{-3/4} K_{\beta,P,\delta}^{1/4} K_{\alpha,\mathbb{T},\delta}^{3/4}   |P|^{3/4} |\mathbb{T}|^{1/4} . \]
If $3\alpha+\beta > 7$, then with $\lambda = \frac{4}{3\alpha+\beta-3} < 1$, for any $\epsilon >0$,
\[ I(P, \mathbb{T}) \leq C_{\epsilon} \delta^{-\epsilon} \delta^{-3\lambda/4}   K_{\beta,P,\delta}^{\lambda/4} K_{\alpha,\mathbb{T},\delta}^{3\lambda/4}  |P|^{1-\frac{\lambda}{4}} |\mathbb{T}|^{1-\frac{3\lambda}{4}} . \]

If $\alpha + \beta \leq 4$, then for any $\epsilon >0$, 
\[ I(P, \mathbb{T}) \leq C_{\epsilon} \delta^{-\epsilon} \delta^{-1}   K_{\beta,P,\delta}^{1/2} K_{\alpha,\mathbb{T},\delta}^{1/2}  |P|^{1/2} |\mathbb{T}|^{1/2} .  \]
If $\alpha + \beta > 4$, then for any $\epsilon >0$, 
\[ I(P, \mathbb{T}) \leq C_{\epsilon} \delta^{-\epsilon} \delta^{-2\lambda}  K_{\beta,P,\delta}^{\lambda}  K_{\alpha,\mathbb{T},\delta}^{\lambda} |P|^{1-\lambda} |\mathbb{T}|^{1-\lambda}, \]
where $\lambda = \frac{1}{\alpha+\beta-2} < 1/2$.
\end{theorem}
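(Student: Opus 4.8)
The plan is to run the proofs of Theorem~\ref{theoremincidencecircle} and Theorem~\ref{theoremincidencecircleL2} essentially verbatim, making two changes throughout: replace the convolution $\sigma_t \ast f$ by the variable-coefficient average $A_tf(x) = \int f\, d\sigma_{x,t}$, and perform the high--low decomposition on the input $f = \sum_{T\in\mathbb{T}} \chi_{10T}$ only (never on its image under $A_t$). First I would observe that if the curve $\Sigma_{x,t}$ meets a disc $T\in\mathbb{T}$, then $\delta \lesssim \int \chi_{10T}\, d\sigma_{x,t}$ for $(x,t)$ in the associated $\delta$-ball of $P$ --- the analogue of \eqref{referencedbelow} --- since the density of $\sigma_{x,t}$ is $\sim 1$ by \eqref{phongstein}. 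Summing over incident discs, integrating over the $\delta$-ball and summing over $B\in P$ gives $I(P,\mathbb{T}) \lesssim \delta^{-4}\int_{\mathbb{R}^2\times[1,2]} \big(\sum_{B\in P}\chi_{B((x_B,t_B),\delta)}\big)A_tf(x)\,dx\,dt$, and writing $f = f_h + f_l$ with $\widehat{f_h}$ supported in $|\xi|\sim R:=\delta^{-1-\epsilon^2}$ and $\widehat{f_l}$ in $|\xi|\lesssim\delta^{\epsilon^2-1}$ (up to negligible tails) splits this exactly as \eqref{highlow}/\eqref{twoterms}.

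For the high case of the first two bounds I would H\"older in $(x,t)$ against $\sum_B\chi_B$, bounding its $L^{4/3}$ norm by $(K_{\beta,P,\delta}^{1/3}|P|\delta^3)^{3/4}$ via the $(\delta,\beta,K)$-set condition on $P$ at scale $\delta$, and then apply the Gao--Liu--Miao--Xi local smoothing inequality \eqref{variablecoefficientaveraging} to $A_tf_h$ --- this is the step where the cinematic curvature conditions \eqref{phongstein}, \eqref{cinematic} enter, since they guarantee (by the Phong--Stein/Kung characterisation discussed above and in \cite{kung}) that $A_t$ is a Fourier integral operator of order $-1/2$ to which \cite[Theorem~1.4]{gaoliumiaoxi} applies. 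Using $\|f_h\|_4 \le \|f\|_4 \lesssim (K_{\alpha,\mathbb{T},\delta}^3|\mathbb{T}|\delta^2)^{1/4}$ (from $\sum_T\chi_{10T}\lesssim K_{\alpha,\mathbb{T},\delta}$, the $(\delta,\alpha,K)$-set condition at scale $\delta$) and collecting powers of $\delta$ would give the stated $\delta^{-3/4-\epsilon}K_{\beta,P,\delta}^{1/4}K_{\alpha,\mathbb{T},\delta}^{3/4}|P|^{3/4}|\mathbb{T}|^{1/4}$. For the high case of the last two bounds I would instead Cauchy--Schwarz as in \eqref{latergeneralisation} and use the fixed-time $L^2$ bound for order $-1/2$ Fourier integral operators, $\|A_tf_h\|_{L^2_x}\lesssim R^{\epsilon-1/2}\|f_h\|_2$ (e.g.\ \cite[Theorem~2.1]{beltranhickmansogge}), followed by a trivial integration in $t\in[1/2,5/2]$; with $\|f\|_2\lesssim(K_{\alpha,\mathbb{T},\delta}|\mathbb{T}|\delta^2)^{1/2}$ this gives $\delta^{-1-\epsilon}K_{\beta,P,\delta}^{1/2}K_{\alpha,\mathbb{T},\delta}^{1/2}|P|^{1/2}|\mathbb{T}|^{1/2}$. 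No intermediate error term like the $\sigma^{(2)}$ term of \eqref{threeterms} will appear, because here there is no Fourier multiplier to peel apart --- the $R^{-1/2}$ gain is supplied directly by the FIO estimate.

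The low case should go through unchanged from the two circular proofs. Setting $S=\delta^{-\epsilon^2-\epsilon^4}$, Hausdorff--Young gives $\|\chi_{10T}\ast\widecheck{\psi}\|_\infty\lesssim\delta^{2\epsilon^2}$ with $\widecheck{\psi}$ negligible off $B(0,\delta S)$, so $|f_l|\lesssim \delta^{-O(\epsilon^4)}S^{-2}\sum_T\chi_{ST}$; since $\sigma_{x,t}$ has bounded density, $\int\chi_{ST}\,d\sigma_{x,t}\lesssim\delta S$, so $\delta^{-4}\int(\sum_B\chi_B)A_tf_l \lesssim S^{-1}\delta^{-O(\epsilon^4)}I(P_S,\mathbb{T}_S)$, where $P_S$ thickens the neighbourhoods of the (unchanged) curves to width $\delta S$ and $\mathbb{T}_S$ dilates the discs by $S$. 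Since the family $\{\Sigma_{x,t}\}$ itself is literally unchanged it is still cinematic at scale $\delta S$, so the inductive hypothesis applies; inserting $K_{\alpha,\mathbb{T}_S,\delta S}\lesssim S^\alpha K_{\alpha,\mathbb{T},\delta}$, $K_{\beta,P_S,\delta S}\lesssim S^\beta K_{\beta,P,\delta}$, $|P_S|=|P|$, $|\mathbb{T}_S|=|\mathbb{T}|$ produces the factor $S^{-7/4+(3\alpha+\beta)/4}$ in the first regime and $S^{-2+(\alpha+\beta)/2}$ in the second, both $\le 0$ under the hypotheses $3\alpha+\beta\le 7$ and $\alpha+\beta\le 4$, which closes the induction on $\delta$ (base case $\delta\sim 1$: the trivial bound $I(P,\mathbb{T})\le|P||\mathbb{T}|$). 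The second bound in each pair would follow from the first by the same H\"older interpolation against $I(P,\mathbb{T})\le|P||\mathbb{T}|$ as at \eqref{trivial}, with $\lambda=\tfrac{4}{3\alpha+\beta-3}$ and $\lambda=\tfrac{1}{\alpha+\beta-2}$ respectively.

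The only genuine difficulty I anticipate is conceptual, not computational: making sure the part of the circular arguments that relied on the explicit Fourier-side structure of $\widehat{\sigma}$ (isolating $\widehat{\sigma^{(1)}}$, bounding $\widehat{\sigma^{(2)}}$ by $|\xi|^{-3/2}$, invoking \eqref{guthwangzhang2}--\eqref{guthwangzhang3}) survives the loss of the convolution. It does, precisely because the conditions \eqref{phongstein} and \eqref{cinematic} were designed to make $A_t$ an order $-1/2$ Fourier integral operator with cinematic curvature: the local smoothing inequality and the fixed-time $L^2$ estimate then already contain the exact $R^{-1/2}$ smoothing, so there is nothing left to extract and no error term to separate. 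Everything else --- the high--low split, the bounded density of $\sigma_{x,t}$, the dilation bookkeeping in the low case, and the induction --- uses only soft features common to $\sigma_t$ and $\sigma_{x,t}$, so the transfer should be routine.
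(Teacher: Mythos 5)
Your proposal is correct and follows essentially the same route as the paper: the paper likewise derives this theorem by rerunning the proofs of Theorem~\ref{theoremincidencecircle} and Theorem~\ref{theoremincidencecircleL2} with $\sigma_t \ast f$ replaced by $\int f\,d\sigma_{x,t}$, invoking the Gao--Liu--Miao--Xi inequality \eqref{variablecoefficientaveraging} (resp.\ the fixed-time $L^2$ bound for order $-1/2$ Fourier integral operators) in the high case and leaving the low-case induction unchanged. Your observation that no analogue of the $\sigma^{(2)}$ error term is needed, since the FIO estimates already carry the $R^{-1/2}$ gain, is exactly the simplification implicit in the paper's argument.
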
 

\section{Converting incidence bounds to lower bounds for curved Furstenberg sets} \label{conversion}\begin{sloppypar} The following proposition is a straightforward adaptation of \cite[Proposition~3.15]{orponennotes}, which converts incidence bounds into lower bounds for the dimension of Furstenberg sets. \end{sloppypar}
\begin{proposition} \label{orponenproposition} Let $K_1, K_2 >0$. Suppose that $f: [0,2] \times [0,3] \to [0, \infty)$ is a continuous function with the property that
\[ I(P, \mathbb{T}) \leq C_{\alpha,\beta,K_1, K_2} \delta^{-f(\alpha, \beta) }, \]
for any $\delta \in (0,1)$, $\alpha \in [0,2]$ and $\beta \in [0,3]$ such that $P$ is $\left( \delta, \beta, K_1\right)$-set of $\delta$-neighbourhoods of circles in the plane of radii between 1 and 2, and $\mathbb{T}$ is a $( \delta, \alpha, K_2)$-set of $\delta$-discs in the plane. Then, for any $0 < u \leq 1$ and $0 < v \leq 3$, any circular $(u,v)$-Furstenberg set $F \subseteq \mathbb{R}^2$   satisfies 
\begin{equation} \label{finequality} f( \dim F, v) \geq u+v. \end{equation}
More generally, this holds if the family of circles is replaced by any family of cinematic curves.\end{proposition}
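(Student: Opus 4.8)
The plan is to run the standard Frostman-measure discretisation that turns an incidence bound into a Furstenberg dimension bound, following \cite[Proposition~3.15]{orponennotes}, with only notational changes to pass from lines to circles (and then to cinematic curves). First I would make the harmless reductions: decomposing the radius range $(0,\infty)$ into dyadic intervals and rescaling, one may assume the circles in the family defining $F$ have radii in $[1,2]$ (in the cinematic case this is built into Definition~\ref{cinematicdefn}), and by intersecting with a large ball one may assume $E$ and $F$ are bounded. Since $f$ is continuous, it suffices to prove $f(\dim F, v') \ge u' + v'$ for all $u' < u$ and $v' < v$ and then let $u' \uparrow u$, $v' \uparrow v$; and, again by continuity of $f$ in its first slot, it is enough to realise at a sequence of scales $\delta \to 0$ a point-set of ``dimension $\dim F$'' (up to an arbitrarily small loss in the exponent).

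Second, I would fix the measures. By the definition of a circular $(u,v)$-Furstenberg set together with Frostman's lemma there is a compactly supported probability measure $\nu$ with $\nu(B_r) \lesssim r^{v'}$ for all $r$, which we identify via the map $\Phi$ of Section~\ref{notation} with a Frostman measure on $E \subseteq \mathbb{R}^2 \times [1,2] \subseteq \mathbb{R}^3$. For $\nu$-a.e.\ $(x,t)$ the fibre $C(x,t) \cap F$ has dimension $\ge u$, so Frostman's lemma supplies a probability measure $\mu_{x,t}$ on $C(x,t) \cap F$ with $\mu_{x,t}(B_r) \le C(x,t)\, r^{u'}$; by Egorov's theorem (or a pigeonholing on the size of $C(x,t)$) there is a set $E'$ of positive $\nu$-measure on which this holds with a uniform constant $C_0$, and I renormalise $\nu$ to be supported on $E'$. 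Let $\mu = \int_{E'} \mu_{x,t}\, d\nu(x,t)$; this is a nontrivial measure carried by $F$.

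Third, I would discretise and estimate $I(P,\mathbb{T})$ from both sides. Along a suitable sequence $\delta \to 0$ --- chosen so that an efficient cover of $F$ is realised at scale $\delta$, and so that the usual dyadic pigeonholing of $\nu$ and $\mu$ at scale $\delta$ loses at most a factor $\delta^{-\epsilon}$ --- I would extract a set $P$ of $\delta$-neighbourhoods of circles from $E'$ that is a $(\delta, v', K_1)$-set with $|P| \gtrsim \delta^{-v'}$, and a set $\mathbb{T}$ of $\delta$-discs that carry the $\mu$-mass and form a $(\delta, \alpha, K_2)$-set with $\alpha$ equal to $\dim F$ up to an $\epsilon$-loss, the parameters $K_1, K_2$ being controlled along these scales. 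The incidence lower bound is geometric: for each circle in $P$, the bounds $\mu_{x,t}(B_\delta) \lesssim \delta^{u'}$ and $\mu_{x,t}(C(x,t) \cap F) \gtrsim 1$ force $C(x,t)$ to pass within $\delta$ of $\gtrsim \delta^{-u'}$ discs of $\mathbb{T}$ (up to $\delta^{O(\epsilon)}$), so $I(P,\mathbb{T}) \gtrsim |P|\, \delta^{-u'} \gtrsim \delta^{-u'-v'+O(\epsilon)}$. The upper bound is the hypothesis, $I(P,\mathbb{T}) \le C_{\alpha, v', K_1, K_2}\, \delta^{-f(\alpha, v')}$. Comparing and letting $\delta \to 0$ gives $u' + v' \le f(\alpha, v')$; then $\epsilon \to 0$, $u' \uparrow u$, $v' \uparrow v$, together with continuity of $f$, yields $f(\dim F, v) \ge u + v$. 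The cinematic case is word-for-word the same: the only geometric input is the identification (Section~\ref{notation}) of $\delta$-neighbourhoods of the curves $\Sigma_{x,t}$ with $\delta$-balls in the $(x,t)$-parameter space, and the trivial remark that ``$\Sigma_{x,t}$ meets a fixed $\delta$-disc $T$'' is, up to a bounded factor, a $\delta$-ball condition on $(x,t)$.

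I expect the regularisation to be the main obstacle: $\dim F$, $\dim E$ and $\dim(C(x,t)\cap F)$ are Hausdorff dimensions, defined through covers at all scales, whereas the incidence hypothesis applies to honest $(\delta, \beta, K_1)$- and $(\delta, \alpha, K_2)$-sets at a single scale. Reconciling the two --- selecting a sequence of scales at which a cover of $F$ is efficient, dyadically pigeonholing $\nu$ and $\mu$ onto a single mass level, and passing to uniform subfamilies that are genuine $(\delta, \cdot, K)$-sets with controlled $K$ while keeping almost all incidences --- is where all the bookkeeping sits (and is precisely what \cite[Proposition~3.15]{orponennotes} carries out); once that is in place the circular and cinematic features contribute nothing beyond the change of notation. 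If one only achieves $K_1, K_2 \lesssim \delta^{-\epsilon}$ at the relevant scales, this is still harmless, since the incidence estimates of Section~\ref{incidence} depend polynomially on these parameters, so the extra factor is absorbed into the $\epsilon \to 0$ limit.
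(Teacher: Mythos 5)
Your overall strategy---adapting the incidence-to-Furstenberg conversion of \cite[Proposition~3.15]{orponennotes} with circles (or cinematic curves) in place of lines---is exactly the paper's, and your preliminary reductions (radii in $[1,2]$, continuity of $f$, $\epsilon$-losses in $u,v$) match. But the discretisation mechanism you actually describe has a genuine gap. You propose to take Frostman measures $\nu$ on $E$ and $\mu_{x,t}$ on the fibres, set $\mu=\int\mu_{x,t}\,d\nu$, and then, ``along a suitable sequence of scales at which an efficient cover of $F$ is realised'', extract a family $\mathbb{T}$ of $\delta$-discs carrying the $\mu$-mass that is a $(\delta,\alpha,K_2)$-set with $\alpha\approx\dim F$. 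Two steps fail as stated. First, since $\dim F$ is Hausdorff dimension, there is in general no single scale $\delta$ (not even along a sequence $\delta\to0$) at which $F$ is covered by $\lesssim\delta^{-\dim F-\epsilon}$ discs: that is a box-dimension statement, and the upper box dimension can be strictly larger (add a countable dense set to $F$). Second, even given such a cover, the discs carrying the $\mu$-mass satisfy no Katz--Tao non-concentration at exponent $\dim F$: $\mu$ only has Frostman exponent about $u$, and the $(\delta,\alpha,K_2)$ condition with $\alpha\approx\dim F$ is an upper, covering-type statement about $F$ that no measure supported on $F$ supplies. Moreover you need this same family $\mathbb{T}$ to capture, for every circle in your family, a definite proportion of the fibre's mass so that $I(C,\mathbb{T})\gtrsim\delta^{-u'}$; this compatibility between the global family and all fibres is not addressed in your sketch.

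The paper resolves precisely these points, and this is the substance of the proof rather than bookkeeping: it works with Hausdorff contents, not measures. By \cite[Lemma~3.3]{hera} one finds a family $E$ of circles with $\mathcal{H}^v_\infty(E)\gtrsim1$ and $\mathcal{H}^u_\infty(F\cap C)\gtrsim1$ for all $C\in E$; by \cite[Lemma~19]{gan}, for any $\alpha>\dim F$ there is a \emph{multi-scale} cover $\{\mathbb{T}_k\}_{k\geq k_0}$ of $F$ by dyadic cubes of side $2^{-k}$, each $\mathbb{T}_k$ satisfying the $(2^{-k},\alpha)$ non-concentration condition by construction. Subadditivity of content pigeonholes each circle to a scale $k(C)$ with $\mathcal{H}^u_\infty\bigl(\bigcup_{D\in\mathbb{T}_k}D\cap C\bigr)\gtrsim k^{-2}$, a second pigeonholing fixes a common $k$, and \cite[Lemma~2.13]{fasslerorponen} extracts a genuine $(2^{-k},v,C')$-set $E''$ of circles with $|E''|\gtrsim k^{-2}2^{kv}$. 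Since the cover is the same for all circles, the per-circle incidence lower bound is immediate (any $2^{-k}$-cover of a set of $u$-content $\gtrsim k^{-2}$ has $\gtrsim k^{-2}2^{ku}$ elements), giving $I(E'',\mathbb{T}_k)\gtrsim k^{-4}2^{k(u+v)}$, which is compared with the hypothesis at $\beta=v$ and $\alpha>\dim F$, and one lets $k_0\to\infty$ and $\alpha\downarrow\dim F$. The missing idea in your write-up is this multi-scale Katz--Tao cover combined with pigeonholing over scales and the use of Hausdorff content on the fibres; the single-scale Frostman-measure route does not produce the required $(\delta,\dim F+\epsilon,K_2)$-set of discs.
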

For concreteness we will prove this only for circles; the generalisation to cinematic curves \eqref{bound1'} will be evident. The proof is taken almost verbatim from \cite[Proposition~3.15]{orponennotes} as we just replace lines by circles, but we will include it here for completeness.
\begin{proof}[Proof of Proposition~\ref{orponenproposition}]
We will instead prove that for any $\epsilon > 0$ and any circular $(u+\epsilon, v+\epsilon)$-Furstenberg set $F \subset \R^2$, we have \eqref{finequality}. This version formally implies Proposition~\ref{orponenproposition} by continuity of $f$.

In this proof, we associate a circle $C(x, t)$ with the pair $(x, t) \in \R^2 \times (0, \infty)$ without further comment. Fix a circular $(u+\epsilon, v+\epsilon)$-Furstenberg set $F \subset \R^2$. By scaling and by countable subadditivity of the Hausdorff content, we can assume the circles defining the Furstenberg property for $F$ have radii between 1 and 2.
Therefore, following the proof of Lemma 3.3 in \cite{hera}, we can find a constant $c > 0$ and a family $E$ of circles of radii between 1 and 2 with $\mathcal{H}^v_{\infty}(E) \geq c$, such that $\mathcal{H}^u_{\infty}(F \cap C) \geq c$ for all $C \in E$. We will assume $c = 1$ for convenience; the exact value of $c$ is irrelevant and only affects the values of some constants.

Fix $\alpha > \dim F$. By \cite[Lemma~19]{gan}, for any fixed large integer $k_0$, we can find a sequence $\{\mathbb{T}_k\}_{k \geq k_0}$ of sets of dyadic cubes, such that $\bigcup_{k \geq k_0} \mathbb{T}_k$ is a cover of $F$, and where each $\mathbb{T}_k$ consists of disjoint (semi-open) dyadic cubes of side length $2^{-k}$, with at most $\left( \frac{2^{-l}}{2^{-k} } \right)^{\alpha}$ cubes inside any dyadic cube of side length $2^{-l}$, for any $l < k$. By the covering property, $\mathcal{H}^u_{\infty}\left( \bigcup_{k \geq k_0} \bigcup_{D \in \mathbb{T}_k} D \cap C \right) \geq 1$ for every $C \in E$, so by subadditivity of the Hausdorff content, for any $C \in E$, there exists a positive integer $k=k(C) \geq k_0$ with $\mathcal{H}^u_{\infty}\left(  \bigcup_{D \in \mathbb{T}_k} D \cap C \right) \gtrsim k^{-2}$. By subadditivity of the Hausdorff content again, we can find a fixed $k \geq k_0$ and a set $E' \subseteq E$ with $\mathcal{H}^v_{\infty}(E') \gtrsim k^{-2}$, such that $k=k(C)$ for every $C \in E'$. Since $\mathcal{H}^v_{\infty}(E') \gtrsim k^{-2}$, by \cite[Lemma~2.13]{fasslerorponen} (or Lemma~3.13 of the arXiv version of \cite{fasslerorponen}), we can find a finite set $E'' \subseteq E'$, which is a $\left( 2^{-k}, v, C'\right)$-set for some absolute constant $C'$, and $|E''| \gtrsim k^{-2} 2^{kv}$. This set satisfies 
\[ \mathcal{H}^u_{\infty}\left( \bigcup_{D \in \mathbb{T}_k} D \cap C \right) \gtrsim k^{-2}, \]
for all $C \in E''$. This implies that 
\[ I\left( C, \mathbb{T}_k\right) \gtrsim 2^{ku} k^{-2}, \]
for all $C \in E''$. Summing over $C \in E''$ yields 
\[ I\left(E'',\mathbb{T}_k \right) \gtrsim k^{-4} 2^{k(u+v)}. \]
By assumption
\[ I\left( E'', \mathbb{T}_k\right) \lesssim 2^{k f(\alpha, v)}, \]
 Hence 
\[ k^{-4} 2^{k(u+v)} \lesssim 2^{kf(\alpha,v)}. \]
Since $k \geq k_0$, and $k_0$ can be chosen arbitrarily large in the above argument (and the implicit constants are independent of $k_0$), this yields that 
\[ u+v \leq f(\alpha, v). \]
By continuity of $f$, and since this holds for any $\alpha > \dim F$, it follows that
\[ f(\dim F, v) \geq u+v. \qedhere \]
\end{proof}
 \begin{sloppypar}
\begin{proof}[Proof of \eqref{bound1}, \eqref{bound1'}, and \eqref{bound2}] Again we consider only the case of circles, for concreteness. By Theorem~\ref{theoremincidencecircle}, if $P$ is a $\left( \delta, \beta, K_{\beta,P,\delta}\right)$-set of $\delta$-neighbourhoods of circles in the plane, of radii between 1 and 2, and if $\mathbb{T}$ is a $\left( \delta, \alpha, K_{\alpha, \mathbb{T}, \delta}\right)$-set of $\delta$-discs in the plane, then for any $\epsilon >0$,
\begin{equation} \label{incidencecount} I(P, \mathbb{T}) \leq C_{\epsilon} \delta^{-f(\alpha,\beta) } \end{equation}
where 
\[ f(\alpha,\beta)= \epsilon+ \max\left\{  \frac{\alpha+3\beta+3}{4}, 
\alpha+\beta-1 \right\}. \]
Hence, by Proposition~\ref{orponenproposition},
\begin{equation} \label{reusedbelow} f(\dim F, v) \geq u+v. \end{equation}
It follows (by letting $\epsilon \to 0$) that
\[ \max\left\{\frac{ \dim F + 3v +3}{4}, \dim F + v -1 \right\} \geq u+v.\]
Hence
\[ \dim F \geq \min\left\{ 4u+v-3, u+1\right\}, \]
which is \eqref{bound1}. The lower bound \eqref{bound2} is similar; using Theorem~\ref{theoremincidencecircleL2} to get \eqref{incidencecount} with 
\[ f(\alpha,\beta) = \epsilon+ \max\left\{1 + \frac{\alpha+\beta}{2}, 
\alpha+\beta-1 \right\}. \]
Proposition~\ref{orponenproposition} then yields \eqref{reusedbelow}, which gives (letting $\epsilon \to 0$)
\[ \max\left\{1 + \frac{\dim F+v}{2}, \dim F + v -1 \right\} \geq u+v,\]
which implies that 
\[ \dim F \geq \min\left\{2u+v-2, u+1\right\}, \]
which is \eqref{bound2}. 
\end{proof} \end{sloppypar}


The following is a broad version of Proposition~\ref{orponenproposition}. 
 \begin{proposition} \label{orponenproposition2} \begin{enumerate}
     \item
 Let $K_1, K_2 \geq 1$. Suppose that $f: [0,2] \times [0,3] \to [0, \infty)$ is a continuous function with the property that for any $\kappa>0$,
\[ I_{\kappa}(P, \mathbb{T}) \leq C_{\alpha,\beta,K_1,K_2} \cdot \kappa^{-O(1)} \delta^{-f(\alpha, \beta) }, \]
for any $\delta \in (0,1)$, $\alpha \in [0,2]$ and $\beta \in [0,3]$ such that $P$ is a $\left( \delta, \beta, K_1\right)$-set of $\delta$-neighbourhoods of sine waves in $[0, 2\pi] \times \mathbb{R}$, and $\mathbb{T}$ is a $( \delta, \alpha, K_2)$-set of $\delta$-discs in the plane. Then, for any $0 < u \leq 1$ and $0 \leq v \leq 3$, any sine wave $(u,v)$-Furstenberg set $F \subseteq \mathbb{R}^2$   satisfies 
\begin{equation} \label{finequality2} f( \dim F, v) \geq u+v. \end{equation}

\item Let $K_1, K_2 \ge 1$, and define $S = \left\{ (\alpha, \beta) \in [0,2] \times [0,3] : 3\alpha+2\beta \leq 9 \right\}$. Suppose that $g: S \to [0, \infty)$ is a continuous function with the property that for any $\delta >0$, $A \geq |\log \delta|$ and $\K \geq 1$,
\[ I_{A,\K}^{\broad}(P, \mathbb{T}) \leq C_{\alpha,\beta,K_1,K_2} \K^{O(1)} \delta^{-g(\alpha, \beta)}, \]
whenever $P$ is a $\left( \delta, \beta, K_1\right)$-set of $\delta$-neighbourhoods of circles in $\mathbb{R}^2$, of radii between 1 and 2, and $\mathbb{T}$ is a $( \delta, \alpha, K_2)$-set of $\delta$-discs in the plane. Then, for $0 < u \leq 1$ and $0 \leq v \leq 3$, any circular $(u,v)$-Furstenberg set $F \subseteq \mathbb{R}^2$ with $3 \dim F + 2v < 9$  satisfies 
\begin{equation} \label{finequality4} g( \dim F, v) \geq u+v. \end{equation}
This holds similarly if $P$ is $\left( \delta, \beta, K_1\right)$-set of $\delta$-neighbourhoods of sine waves in $[0, 2\pi] \times \mathbb{R}$.
\end{enumerate}
\end{proposition}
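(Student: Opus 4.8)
The plan is to follow the proof of Proposition~\ref{orponenproposition} almost verbatim, adding only one geometric observation: the Furstenberg hypothesis forces the incidences along each individual curve to be spread over many angular directions, which is precisely what upgrades a bound on $I$ to one on $I_{\kappa}$ (part~(1)) or on $I_{A,\K}^{\broad}$ (part~(2)). As in that proof, we first replace $F$ by a $(u+\epsilon,v+\epsilon)$-Furstenberg set and recover the stated conclusion at the end by continuity of $f$ (resp.\ $g$); in part~(2) the strict inequality $3\dim F+2v<9$ lets us fix $\alpha>\dim F$ with $(\alpha,v)$ still in the interior of $S$. Running the proof of Proposition~\ref{orponenproposition} up to its final paragraph, for every such $\alpha$ and every large enough integer $k$ (which may be sent to $\infty$) we obtain $\delta=2^{-k}$, a $(\delta,v,C')$-set $P$ of circles of radii in $[1,2]$ (resp.\ of sine waves with parameters in a bounded box) with $|P|\gtrsim k^{-2}\delta^{-v}$ and $C'$ absolute, a $(\delta,\alpha,O(1))$-set $\mathbb{T}_{k}$ of $\delta$-discs covering $F$, and, for every $C\in P$, the lower bound $\H^{u}_{\infty}(G_{C})\gtrsim k^{-2}$, where $G_{C}\subseteq C$ is the trace on $C$ of the discs of $\mathbb{T}_{k}$ meeting $C$.

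The extra input is elementary: since each such curve is uniformly Lipschitz, the part of $G_{C}$ lying over an angular window of length $\ell$ has $\H^{u}_{\infty}\lesssim\ell^{u}$ for $u\le1$ (it is covered by a single ball of radius $\sim\ell$). The key point is to take the angular resolution to be a small power of the scale: fix a small $\eta>0$, set $\ell=\delta^{\eta}$, and take $\kappa=\delta^{\eta}$ in part~(1), $\K=\delta^{-\eta}$ and $A=\lceil|\log\delta|\rceil$ in part~(2). Partition the direction circle into $\sim\ell^{-1}$ arcs of length $\sim\ell$. For $C\in P$, the part of $G_{C}$ over any one arc has content $\lesssim\ell^{u}=\delta^{\eta u}$, whereas $\H^{u}_{\infty}(G_{C})\gtrsim k^{-2}$, and $\delta^{\eta u}\ll k^{-2}$ once $k$ is large since $\delta^{\eta u}$ decays exponentially in $k$ while $k^{-2}$ decays only polynomially. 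Thus the content of $G_{C}$ genuinely spreads out. In part~(2), the $O(A)=O(|\log\delta|)$ arcs forbidden by a choice of $(V_{1},\dots,V_{A})$ carry total content $\lesssim|\log\delta|\,\delta^{\eta u}\ll k^{-2}$, so the $\gtrsim\ell^{-1}$ surviving arcs still carry content $\gtrsim k^{-2}$, and pigeonholing gives an admissible arc $\tau$ with $\H^{u}_{\infty}(G_{C}\cap C_{\tau})\gtrsim k^{-2}\ell$. In part~(1) there is no forbidden set, and because each arc carries content $\ll k^{-2}$ there must in fact be at least three arcs, pairwise separated by an empty slot, each carrying content $\gtrsim k^{-2}\ell$.

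Covering by discs of $\mathbb{T}_{k}$ gives $I(C_{\tau},\mathbb{T}_{k})\ge\H^{u}_{\infty}(G_{C}\cap C_{\tau})\,\delta^{-u}\gtrsim k^{-2}\delta^{\eta}\delta^{-u}$, for some admissible $\tau$ and for every choice of forbidden directions, so in part~(2), $I_{A,\K}^{\broad}(C,\mathbb{T}_{k})\gtrsim k^{-2}\delta^{\eta}\delta^{-u}$. In part~(1), selecting discs of $\mathbb{T}_{k}$ meeting $C$ over each of the three separated arcs produces $\gtrsim\bigl(k^{-2}\delta^{\eta}\delta^{-u}\bigr)^{3}$ triples that are pairwise $\gtrsim\delta^{\eta}$-separated (their first coordinates lie in separated length-$\delta^{\eta}$ arcs), hence $I_{\kappa}(C,\mathbb{T}_{k})\gtrsim k^{-2}\delta^{\eta}\delta^{-u}$. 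Summing over $C\in P$ gives in both cases a lower bound $\gtrsim|P|\,k^{-2}\delta^{\eta}\delta^{-u}\gtrsim k^{-4}\delta^{\eta}\delta^{-(u+v)}$. The hypothesis supplies the upper bounds $I_{\kappa}(P,\mathbb{T}_{k})\le C\kappa^{-O(1)}\delta^{-f(\alpha,v)}=C\delta^{-O(\eta)}\delta^{-f(\alpha,v)}$ and $I_{A,\K}^{\broad}(P,\mathbb{T}_{k})\le C\K^{O(1)}\delta^{-g(\alpha,v)}=C\delta^{-O(\eta)}\delta^{-g(\alpha,v)}$. Comparing, dividing by $k^{4}$ and by $\delta^{-O(\eta)}$, and letting $k\to\infty$ yields $u+v\le f(\alpha,v)+O(\eta)$ (resp.\ $u+v\le g(\alpha,v)+O(\eta)$); letting $\eta\to0$, then $\alpha\downarrow\dim F$, and using continuity of $f$ on $[0,2]\times[0,3]$ (resp.\ of $g$ on $S$) gives \eqref{finequality2} (resp.\ \eqref{finequality4}).

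The only substantive point is the non-concentration step of the second paragraph, whose crux is choosing the angular resolution to be $\delta^{\eta}$ rather than a fixed constant, so that the content lower bound $k^{-2}$ dominates the content $\delta^{\eta u}$ available in a single arc; the presence of the $A=O(|\log\delta|)$ forbidden directions in part~(2) is then harmless since $A$ is negligible next to the number $\sim\delta^{-\eta}$ of arcs. Everything else is a rerun of Proposition~\ref{orponenproposition} together with routine discretization bookkeeping (that a $\delta$-disc of $\mathbb{T}_{k}$ meeting the sector $C_{\tau}$ projects into a $\lesssim\delta$-neighbourhood of $\tau$, and that discs meeting separated length-$\delta^{\eta}$ arcs are pairwise $\gtrsim\delta^{\eta}$-separated). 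I do not anticipate any real difficulty beyond fixing these constants and the short combinatorial argument producing the three separated arcs.
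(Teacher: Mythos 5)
Your proposal is correct and follows essentially the same route as the paper: rerun the discretisation from Proposition~\ref{orponenproposition}, then use the fact that for $u>0$ the $\mathcal{H}^u_\infty$-content of $F$ along each curve cannot concentrate in a few short angular windows, which yields three separated arcs for $I_\kappa$ and an admissible arc surviving any $O(A)$ forbidden directions for $I^{\broad}_{A,\K}$. The only (harmless) difference is in part (1), where the paper takes the angular scale and $\kappa$ to be $\sim k^{-2/u}$, i.e.\ polylogarithmic in $1/\delta$, so the $\kappa^{-O(1)}$ loss is absorbed without your extra parameter $\eta$ and the limit $\eta\to 0$.
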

\begin{proof} By continuity of $f$, for the first part it suffices to obtain \eqref{finequality2} with $u$ and $v$ replaced by $u-\epsilon$ and $v-\epsilon$ for arbitrarily small $\epsilon$. Therefore, we can find a family $E$ of sine waves (identified with a subset of $\mathbb{R}^3$) with $\mathcal{H}^v_{\infty}(E) \geq 1$, such that $\mathcal{H}^u_{\infty}(F \cap \Gamma) \geq 1$ for all $\Gamma \in E$. Fix $\alpha > \dim F$. By \cite[Lemma~19]{gan}, for any fixed large integer $k_0$, we can find a sequence $\{\mathbb{T}_k\}_{k \geq k_0}$ of sets of dyadic cubes, such that $\bigcup_{k \geq k_0} \mathbb{T}_k$ is a cover of $F$, and where each $\mathbb{T}_k$ consists of disjoint dyadic cubes of side length $2^{-k}$, with at most $\left( \frac{2^{-l}}{2^{-k} } \right)^{\alpha}$ cubes inside any dyadic cube of side length $2^{-l}$, for any $l < k$. By the covering property, $\mathcal{H}^u_{\infty}\left( \bigcup_{k \geq k_0} \bigcup_{D \in \mathbb{T}_k} D \cap \Gamma \right) \geq 1$ for every $\Gamma \in E$, so by subadditivity of the Hausdorff content, for any $\Gamma \in E$, there exists a positive integer $k=k(\Gamma) \geq k_0$ with $\mathcal{H}^u_{\infty}\left(  \bigcup_{D \in \mathbb{T}_k} D \cap \Gamma \right) \gtrsim k^{-2}$. By subadditivity of the Hausdorff content again, we can find a fixed $k \geq k_0$ and a set $E' \subseteq E$ with $\mathcal{H}^v_{\infty}(E') \gtrsim k^{-2}$, such that $k=k(\Gamma)$ for every $\Gamma \in E'$. Since $\mathcal{H}^v_{\infty}(E') \gtrsim k^{-2}$, by \cite[Lemma~2.13]{fasslerorponen} (or Lemma~3.13 of the arXiv version of \cite{fasslerorponen}), we can find a finite set $E'' \subseteq E'$, which is a $\left( 2^{-k}, v, C'\right)$-set for some absolute constant $C'$, and $|E''| \gtrsim k^{-2} 2^{kv}$. This set satisfies 
\[ \mathcal{H}^u_{\infty}\left( \bigcup_{D \in \mathbb{T}_k} D \cap \Gamma \right) \gtrsim k^{-2}, \]
for all $\Gamma \in E''$. Therefore, by subadditivity of the Hausdorff content, for any $\Gamma \in E''$, we can find three subsets $\Gamma(1), \Gamma(2), \Gamma(3) \subseteq \Gamma$, whose projections down to the $x$-axis are separated by $\gtrsim k^{-2/u}$ from each other, such that for all $j \in \{1,2,3\}$
\[ \mathcal{H}^u_{\infty}\left( \bigcup_{D \in \mathbb{T}_k} D \cap \Gamma(j) \right) \gtrsim k^{-2}; \]
this is where we use the assumption $u >0$. If we now choose $\kappa \ll k^{-2/u}$, this implies 
\[ I_{\kappa}\left( \Gamma, \mathbb{T}_k\right) \gtrsim k^{-2}2^{ku} , \]
for all $\Gamma \in E''$. Summing over $\Gamma \in E''$, using the definition of $I_{\kappa}(E'', \mathbb{T}_k)$, yields 
\[ I_{\kappa}\left(E'',\mathbb{T}_k \right) \gtrsim k^{-4} 2^{k(u+v)}. \]
However, by assumption
\[ I_{\kappa}\left( E'', \mathbb{T}_k\right) \lesssim \kappa^{-O(1)} 2^{k f(\alpha, v)}, \]
Hence, since we can take $\kappa \sim k^{-2/u}$ in the above, we get 
\[ k^{-O(1)} 2^{k(u+v)} \lesssim 2^{kf(\alpha,v)}. \]
Since $k \geq k_0$, and $k_0$ can be chosen arbitrarily large in the above argument (and the implicit constants are independent of $k_0$), this yields that 
\[ u+v \leq f(\alpha, v). \]
By continuity of $f$, and since this holds for any $\alpha > \dim F$, it follows that 
\[ f(\dim F, v) \geq u+v.  \]
This proves \eqref{finequality2}.

 To prove \eqref{finequality4}, we only consider circles since the case of sine waves is similar. By scaling and by countable subadditivity of the Hausdorff content, we can assume that the radii of the circles lie between 1 and 2. By the continuity of $g$, it suffices to obtain \eqref{finequality4} with $u$ and $v$ replaced by $u-\epsilon$ and $v-\epsilon$ for arbitrarily small $\epsilon$. Therefore, we can find a family $E$ circles (identified with a subset of $\mathbb{R}^2 \times (0, \infty)$) with $\mathcal{H}^v_{\infty}(E) \geq 1$, such that $\mathcal{H}^u_{\infty}(F \cap C) \geq 1$ for all $C \in E$.  Fix $\alpha > \dim F$ sufficiently close to $\dim F$ so that $3\alpha + 2 v < 9$. By \cite[Lemma~19]{gan}, for any fixed large integer $k_0$, we can find a sequence $\{\mathbb{T}_k\}_{k \geq k_0}$ of sets of dyadic cubes, such that $\bigcup_{k \geq k_0} \mathbb{T}_k$ is a cover of $F$, and where each $\mathbb{T}_k$ consists of disjoint dyadic cubes of side length $2^{-k}$, with at most $\left( \frac{2^{-l}}{2^{-k} } \right)^{\alpha}$ cubes inside any dyadic cube of side length $2^{-l}$, for any $l < k$. By the covering property, $\mathcal{H}^u_{\infty}\left( \bigcup_{k \geq k_0} \bigcup_{D \in \mathbb{T}_k} D \cap C \right) \geq 1$ for every $C \in E$, so by subadditivity of the Hausdorff content, for any $C \in E$, there exists a positive integer $k=k(C) \geq k_0$ with $\mathcal{H}^u_{\infty}\left(  \bigcup_{D \in \mathbb{T}_k} D \cap C \right) \gtrsim k^{-2}$. By subadditivity of the Hausdorff content again, we can find a fixed $k \geq k_0$ and a set $E' \subseteq E$ with $\mathcal{H}^v_{\infty}(E') \gtrsim k^{-2}$, such that $k=k(C)$ for every $C \in E'$. Since $\mathcal{H}^v_{\infty}(E') \gtrsim k^{-2}$, by \cite[Lemma~2.13]{fasslerorponen} (or Lemma~3.13 of the arXiv version of \cite{fasslerorponen}), we can find a finite set $E'' \subseteq E'$, which is a $\left( 2^{-k}, v, C'\right)$-set for some absolute constant $C'$, and $|E''| \gtrsim k^{-2} 2^{kv}$. This set satisfies 
\[ \mathcal{H}^u_{\infty}\left( \bigcup_{D \in \mathbb{T}_k} D \cap C \right) \gtrsim k^{-2}, \]
for all $C \in E''$. Let $\varepsilon >0$ and $K = 2^{k \varepsilon}$, and choose $A \geq |\log(2^k) |$ with $A \sim \log(2^k)$. By subadditivity of the Hausdorff content again, we can (for $k$ sufficiently large, depending on $\varepsilon$ and $u$) find at least $100A$ arcs $\{C_j\}_j$ of $C$ of diameter $\K^{-1}$, separated by $\K^{-1}$ from each other, such that for all $1 \leq j \leq 100A$,
\[ \mathcal{H}^u_{\infty}\left( \bigcup_{D \in \mathbb{T}_k} D \cap C_j \right) \gtrsim k^{-2} \K^{-1}. \]
This step is where we use the assumption that $u>0$, and we use that $\K^{-u} A \ll k^{-2}$ if $k$ is sufficiently large. This yields 
\[ I_{A,\K}^{\broad}(C, \mathbb{T}_k) \gtrsim 2^{ku} k^{-2}\K^{-1},\]
for all $C \in E''$. Summing over $C \in E''$ gives 
\[ I_{A,\K}^{\broad}(E'', \mathbb{T}_k) \gtrsim 2^{k(u+v)} k^{-4}\K^{-1}. \] 
However, by assumption
\[ I_{A,\K}^{\broad}\left( E'', \mathbb{T}_k\right) \lesssim  C_{\alpha,v} \K^{O(1)}  2^{k g(\alpha, v)}. \]
 Hence 
\[ \K^{-O(1)}k^{-4} 2^{k(u+v)} \lesssim 2^{kg(\alpha,v)}. \]
Since $k \geq k_0$, and $k_0$ can be chosen arbitrarily large in the above argument (and the implicit constants are independent of $k_0$), this yields that 
\[ u+v \leq g(\alpha, v)+O(\varepsilon). \]
By continuity of $g$, and since this holds for any $\alpha > \dim F$ sufficiently close to $\dim F$, and all $\varepsilon >0$, it follows that 
\[ g(\dim F, v) \geq u+v.  \qedhere \]
\end{proof}

We now combine the previous proposition with the incidence bounds from the previous section to prove the second half of the main theorem.  

\begin{sloppypar} \begin{proof}[Proof of \eqref{sine1} for sine waves, and \eqref{sine2} for circles and sine waves] If $P$ is a $\left( \delta, \beta,L\right)$-set of $\delta$-neighbourhoods of sine waves in $[0, 2\pi] \times \mathbb{R}$, and if $\mathbb{T}$ is a $\left( \delta, \alpha,L\right)$-set of $\delta$-discs in the plane, then for any $\epsilon >0$ and $\kappa>0$,
\[ I_{\kappa}(P, \mathbb{T}) \leq C_{\epsilon,L} \kappa^{-O(1)} \delta^{-f(\alpha,\beta) }, \]
by Theorem~\ref{trilinearincidence}, where 
\[ f(\alpha,\beta)= \alpha + \frac{2\beta}{3}+\epsilon. \]
Hence, by Proposition~\ref{orponenproposition2}, for any sine wave $(u,v)$-Furstenberg set $F \subseteq \mathbb{R}^2$
\begin{equation} \label{furst1} f(\dim F, v) \geq u+v.\end{equation}
Eq.~\eqref{furst1} implies (by letting $\epsilon \to 0$) that $\dim F + \frac{2v}{3} \geq u+v$, which simplifies to $\dim F \geq u + \frac{v}{3}$,
which is \eqref{sine1}.   

To prove \eqref{sine2} for circles, by Theorem~\ref{trilinearincidencecircle}, if $P$ is a $\left( \delta, \beta, K_{\beta,P,\delta}\right)$-set of $\delta$-neighbourhoods of circles in $\mathbb{R}^2$ of radii between 1 and 2, and if $\mathbb{T}$ is a $\left( \delta, \alpha, K_{\alpha, \mathbb{T}, \delta}\right)$-set of $\delta$-discs in the plane, then for any $\epsilon >0$, $A \geq |\log \delta|$ and $K \geq 1$, if $3\alpha+2\beta \leq 9$,
\[ I_{A,\K}^{\broad}(P, \mathbb{T}) \leq  C_{\epsilon} \K^{100} \delta^{- g(\alpha,\beta)}, \]
where
\[ g(\alpha,\beta) = \frac{1}{2} + \frac{\alpha}{2} + \frac{2\beta}{3}+\epsilon. \]
Hence, by Proposition~\ref{orponenproposition2}, for any circular $(u,v)$-Furstenberg set $F \subseteq \mathbb{R}^2$ with $3\dim F + 2v < 9$,
 \begin{equation} \label{furst3} g(\dim F, v) \geq u+v ,\end{equation}
If $3\dim F + 2v < 9$, then Eq.~\ref{furst3} gives $\frac{1}{2} + \frac{\dim F}{2} + \frac{2v}{3} \geq u+v$, which simplifies to $\dim F \geq 2u + \frac{2v}{3} -1$. This lower bound equals $u+1$ when $3(u+1) + 2v = 9$, so in the case where $3\dim F + 2v \geq 9$, if (for a contradiction) the lower bound $\dim F \geq u+1$ fails, then we have $3(u+1)+2v >9$. Therefore, for arbitrarily small $\epsilon$, we can choose $v' \in [0, v)$ such that $3(u+1) + 2v'= 9-\epsilon$. By applying the result with $v'$ instead of $v$ we get $\dim F \geq 2u + \frac{2v'}{3}-1 = u+1-O(\epsilon)$, and by letting $\epsilon \to 0$ we get $\dim F \geq u+1$ (a contradiction). Therefore, since we have shown that either $\dim F \geq 2u + \frac{2v}{3} -1$ or $\dim F \geq u+1$, this gives $\dim F \geq \min\left\{ 2u + \frac{2v}{3}-1, u+1\right\}$ which is \eqref{sine2}. The case of sine waves is similar. \end{proof} \end{sloppypar}

\section{Positive area of sets containing many cinematic curves} \label{positiveareasection}
  
In this section we give the proof of Theorem~\ref{positivearea}. As mentioned in the introduction, the proof is similar to the proof of Corollary~3 in \cite{wolff}; replacing decoupling (or local smoothing) with variable coefficient local smoothing. We refer to Definition~\ref{cinematicdefn} for the definition of cinematic curves.  
\begin{proof}[Proof of Theorem~\ref{positivearea}] Let $\left\{ \Sigma_{x,t} \right\}_{x,t}$ be given, let $E \subseteq \mathbb{R}^2 \times [1,2]$ have $\dim E > 1$, and let $F \subseteq \mathbb{R}^2$ be such that $ \mathcal{H}^1\left( \Sigma_{x,t} \cap F \right) >0$ for all $(x,t) \in E$.  By subadditivity of outer measures, there exists $N, \tau >0$ and a subset $E' \subseteq E$ with $\dim E' \geq 1 +\tau$, such that 
\[ \mathcal{H}^1\left( \Sigma_{x,t} \cap F \right) > \frac{1}{N} \qquad \forall \, (x,t) \in E'. \]
We will treat $N$ as constant, as it will not be important below. Suppose for a contradiction that $\mathcal{H}^2(F) =0$. Then for arbitrarily large $K$ (to be chosen later), there is a covering $\{D_j(x_j, r_j)\}_j$ of $F$ by discs of dyadic radii $r_j$ smaller than $2^{-K}$, such that 
\[ \sum_j r_j^2 \leq 2^{-K}. \]
Let $U = \bigcup_j D(x_j, r_j)$ which is an open set of area $\mathcal{H}^2(U) \leq \pi 2^{-K}$. For each $(x,t) \in E'$, 
\begin{equation} \label{Uintegral} \int \chi_U \, d\sigma_{x,t} \gtrsim 1, \end{equation}
where $\sigma_{x,t}$ is the pushforward of Lebesgue measure on $\mathbb{R}$ multiplied by a smooth cutoff $\psi$, under $y \mapsto g(x,y_1,t)$. Let $\eta$ be a radial Schwartz function with $\widehat{\eta}$ supported in $B_2(0,2) \setminus B_2(0,1/2)$, such that 
\[ \sum_{k \in \mathbb{Z}} \widehat{\eta}(\xi/2^k)  =1,  \qquad \xi \in \mathbb{R}^2 \setminus \{0\}. \]
Define $\eta_k$ by $\widehat{\eta_k}(\xi) = \widehat{\eta}(\xi/2^k)$. By \eqref{Uintegral},
\begin{equation} \label{Uintegral2} \sum_{k \in \mathbb{Z}} \left\lvert \int \left(\chi_U \ast \eta_k \right) \, d\sigma_{x,t}\right\rvert \gtrsim 1, \end{equation}
for each $(x,t) \in E'$. By Young's convolution inequality
\[ \|\chi_U \ast \eta_k \|_{\infty} \lesssim \mathcal{H}^2(U) 2^{2k} \lesssim 2^{2k-K}, \]
so by \eqref{Uintegral2},  
\[ \sum_{k \geq K/2} \left\lvert \int \left(\chi_U \ast \eta_k \right) \, d\sigma_{x,t}\right\rvert \gtrsim 1; \]
for $(x,t) \in E'$, where we now choose $K$ so that $2^{-K/2}$ is much smaller than the implicit constant in \eqref{Uintegral2}. By the above, for each $(x,t) \in E'$, there exists $k= k(x,t) \geq K/2$ such that 
\begin{equation} \label{fixedk} \left\lvert \int \left(\chi_U \ast \eta_k \right) \, d\sigma_{x,t}\right\rvert \gtrsim \frac{1}{k^2}. \end{equation}
For each $k$, let $E_k$ be the set of points in $E'$ such that \eqref{fixedk} holds. 

In \cite[Eq.~(79)]{wolff}, Wolff used the uncertainty principle to conclude that \eqref{fixedk}  continues to hold in a ball of radius $\sim k^{-2}2^{-k}$ around $(x,t)$ whenever it holds for $(x,t)$. However, for general curves $\Sigma_{x,t}$ (not necessarily circles), the Fourier supports of $\chi_U \ast \eta_k$ being contained in a ball of radius $2^k$ does not seem to imply anything about the Fourier supports of the functions 
\[ G(x,t) := \int \left(\chi_U \ast \eta_k \right) \, d\sigma_{x,t}. \] Because of this, we give a direct argument which avoids the uncertainty principle, but is based on the proof of Bernstein's inequality (which is usually used to prove the uncertainty principle; see \cite[Proposition~5.3]{wolff2}). By the definition of $\sigma_{x,t}$ and the chain rule, 
\begin{align*} \partial_{x_1} G(x,t) &=  \partial_{x_1} \int_{\mathbb{R}} \int_U \eta_k\left( \left( \theta, g(x, \theta, t) \right) - y \right) \psi(\theta) \, dy \, d\theta \\
&= \int_{\mathbb{R}}\int_U \left( \partial_{x_1}g\right)(x, \theta, t) \partial_2  \eta_k\left( \left( \theta, g(x, \theta, t) \right) - y \right) \psi(\theta) \, dy \, d\theta. \end{align*}
Since $\left\lVert \nabla \partial \eta_k \right\rVert_{\infty} \lesssim 2^{3k}$, and since $\partial_2 \eta_k$ is rapidly decaying outside the ball around the origin of radius $2^{-k}$, this gives (for any $(x,t)$)
\[ \left\lvert \partial_{x_1} G(x,t) \right\rvert \lesssim 2^k. \]
By a similar argument for the other derivatives of $G$, this gives
\[ \left\lVert \nabla G \right\rVert_{\infty} \lesssim 2^{k}.\]
By the mean value theorem, it follows that if \eqref{fixedk} holds for some $(x,t) = (x_0,t_0)$, then \eqref{fixedk} continues to hold for all $(x,t)$ in ball of radius $\sim 2^{-k}k^{-2}$ around $(x_0,t_0)$.

Fix $k$, let $\{(x_{l,k},t_{l,k})\}_l$ be a maximal $2^{-k}$-separated subset of $E_k$, let $\{\mathbb{D}_k\}$ be the corresponding set of balls of radius $2^{-k}$ centred at points $(x_{l,k},t_{l,k})$, and let $V_k=  \bigcup_{D \in \mathbb{D}_k} D$. Since \eqref{fixedk} continues to hold in any ball of radius $\sim k^{-2} 2^{-k}$ around $(x_{l,k}, t_{l,k})$, 
\[ \mathcal{H}^3\left(V_k\right) \lesssim k^{8} \int \chi_{V_k} \left\lvert \int  \left(\chi_U \ast \eta_k\right) \, d\sigma_{x,t} \right\rvert \, dx \, dt. \]
By Hölder's inequality applied to the above, followed by the $L^4$ variable coefficient local smoothing inequality \eqref{variablecoefficientaveraging}, this gives
\[ \mathcal{H}^3(V_k) \lesssim  \mathcal{H}^3(V_k)^{3/4} 2^{k\left( \epsilon - \frac{1}{2} \right) }, \]
where we choose $\epsilon \ll \tau$. This simplifies to
\[ \mathcal{H}^3(V_k) \lesssim 2^{-2k + 4k\epsilon)}. \]
It follows that 
\[ |\mathbb{D}_k| \lesssim 2^{k(1+4\epsilon)}. \]
Therefore, if we let $\mathbb{D} = \bigcup_{k \geq K/2} \mathbb{D}_k$, we get a covering $\{D\}_{D \in \mathbb{D}_k}$ of $E'$ by discs, such that 
\begin{align*} \sum_{D \in \mathbb{D}} r(D)^{1+\frac{\tau}{2} } &= \sum_{k \geq K/2} \sum_{D \in \mathbb{D}_k} 2^{-k\left(1+\frac{\tau}{2} \right)} \\
&\lesssim \sum_{k \geq K/2} 2^{k(1+4\epsilon)} 2^{-k\left(1+\frac{\tau}{2} \right)} \\
&\lesssim 2^{-K(\tau/4 - 2\epsilon)} \\
&\lesssim 2^{-K\tau/8}, \end{align*}
since $\epsilon \ll \tau$. By taking $K$ arbitrarily large, this shows that $\dim E' \leq 1+\frac{\tau}{2}$, which contradicts the property $\dim E' \geq 1+\tau$ from the construction of $E'$. 
\end{proof}

\section{Sharpness of Proposition~\ref{projection} when \texorpdfstring{$s = t/3$}{s=t/3}} \label{projectionsection}
By using the rational parametrisation of the unit circle 
\[\left( \frac{1-t^2}{1+t^2}, \frac{2t}{1+t^2} \right), \] scaling, and changing variables, to show that Proposition~\ref{projection} is sharp (in the sense that $\dim A/3$ cannot be increased while still expecting a zero dimensional exceptional set), we can replace the projections $\rho_{\theta}$ by the maps $\pi_{\theta}: \mathbb{R}^3 \to \mathbb{R}$, given by 
\[ \pi_{\theta}(x_1, x_2, x_3) = \left\langle \left(x_1,x_2,x_3\right),  \left(1,\theta,\theta^2 \right) \right\rangle, \quad \theta \in \mathbb{R}. \]
Since Conjecture~\ref{furstenbergconjecture} only literally implies Conjecture~\ref{oberlinR3} in the discretised setting, We will give both discrete and continuous counterexamples. The following proposition is the discrete counterexample, which is an adaption of the counterexample to the linear Furstenberg problem from \cite[p.~100]{wolff} (essentially the Szemerédi–Trotter example).
\begin{proposition} Let $t \in (0,3)$ and $s \in (t/3,1]$. There exists an absolute constant $C$ such that, for arbitrarily small $\delta>0$, there exists a $\left( \delta, t, C\right)$-set $A \subseteq B_3(0,1)$ and, for any sufficiently small $\epsilon >0$, there exists a $\left( \delta, s-\frac{t}{3} - 2\epsilon , C_{\epsilon,s,t}\right)$-set $\Theta \subseteq (0,1)$, such that the number $\left\lvert \pi_{\theta}(A) \right\rvert_{\delta}$ of $\delta$-balls required to cover $\pi_{\theta}(A)$ satisfies
\begin{equation} \label{coveringnumber} \left\lvert \pi_{\theta}(A) \right\rvert_{\delta} \leq C \delta^{-s+\epsilon} \quad \forall \, \theta \in \Theta. \end{equation}
\end{proposition}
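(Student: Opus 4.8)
The plan is to exhibit an explicit ``Szemer\'edi--Trotter grid'' adapted to the parabola family $\theta \mapsto (1,\theta,\theta^2)$, with the exponents tuned to $(s,t)$. Write $\delta = 1/P$ and identify a point $x \in [0,1]^3$ in the $\delta$-grid with $x = \alpha/P$, $\alpha \in \{0,\dots,P-1\}^3$. The key elementary observation is that
\[ \pi_{j/q}\!\left( \tfrac{\alpha}{P} \right) = \frac{1}{P}\left( \alpha_1 + \frac{j}{q}\alpha_2 + \frac{j^2}{q^2}\alpha_3 \right), \]
so if $\alpha_1 \in P^{1-s}\mathbb{Z}$, $\alpha_2 \in qP^{1-s}\mathbb{Z}$ and $\alpha_3 \in q^2P^{1-s}\mathbb{Z}$, then $\pi_{j/q}(\alpha/P) \in P^{-s}\mathbb{Z}$ for \emph{every} $j$ with $0 \le j < q$ simultaneously; in particular $\pi_{j/q}$ of any such point lies in a fixed arithmetic progression with $\lesssim \delta^{-s}$ terms inside $[0,O(1)]$, which forces the covering bound \eqref{coveringnumber}. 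After a harmless rescaling into $B_3(0,1)$ this will give $A$, and $\Theta$ will be (a thinning of) the Farey set $\{j/q\}$.

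In the main range $t/3 < s \le 2t/3$, set $q$ equal to (close to) $\delta^{-(s-t/3)}$ and let $A$ consist of the grid points $\alpha/P$ with $\alpha_1 \in P^{1-s}\mathbb{Z}$, $\alpha_2 \in P^{1-t/3}\mathbb{Z}$, $\alpha_3 \in P^{1+s-2t/3}\mathbb{Z}$ (note $qP^{1-s} = P^{1-t/3}$ and $q^2P^{1-s}=P^{1+s-2t/3}$ by the choice of $q$; the three exponents lie in $[0,1]$ precisely because $t/3 < s \le 2t/3$ and $0<t<3$). Then $A$ is a product of three arithmetic progressions with $x$-coordinate spacings $\delta^{s},\delta^{t/3},\delta^{2t/3-s}\in[\delta,1]$, so $|A|_\delta = \delta^{-s}\delta^{-t/3}\delta^{-(2t/3-s)} = \delta^{-t}$, and $A$ is automatically a $(\delta,t,C)$-set with $C$ absolute --- since $\delta^{\rho}\mathbb{Z}\cap[0,1]$ is a $(\delta,\rho,2)$-set for $0\le\rho\le1$ and products of $(\delta,\rho_i,C_i)$-sets are $(\delta,\sum\rho_i,\prod C_i)$-sets, with $\sum\rho_i = t\le3$. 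Taking $\Theta := \{jM/q : 0\le j<q/M\}$ with $M\approx\delta^{-2\epsilon}$ gives an evenly spaced set of $\approx\delta^{-(s-t/3-2\epsilon)}$ points, hence a $(\delta,s-t/3-2\epsilon,C)$-set, and by the observation above $|\pi_\theta(A)|_\delta\lesssim\delta^{-s}\le\delta^{-s+\epsilon}$ for all $\theta\in\Theta$. In the complementary range $2t/3 < s\le1$ (which forces $t<3/2$, so $t\le2$), run the construction with $s$ replaced by $2t/3$: this produces a two-dimensional $A\subseteq\{x_3=0\}$ with $|A|_\delta=\delta^{-t}$ and $\pi_\theta(A)\subseteq P^{-2t/3}\mathbb{Z}$ for all $\theta\in\delta^{t/3}\mathbb{Z}\cap[0,1]$; then enlarge the direction set to $\Theta := \{\theta : \dist(\theta,\delta^{t/3}\mathbb{Z})\le\delta^{1+2t/3-s}\}$. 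Writing $\pi_\theta$ of a point of $A$ as $P^{-2t/3}(\beta_1+\phi\beta_2)$ with $\phi=\theta\delta^{-t/3}$ within $\delta^{s-1-t/3}$ of an integer, one sees that each of the $\delta^{-2t/3}$ progression points is spread into at most $\delta^{-(s-2t/3)}$ many $\delta$-balls, so $|\pi_\theta(A)|_\delta\lesssim\delta^{-s}$, while $|\Theta|_\delta\approx\delta^{-(s-t/3)}$; thinning $\Theta$ as above again yields the $(\delta,s-t/3-2\epsilon,C)$-set property.

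The remainder is routine verification: the Frostman properties of $A$ and of $\Theta$ (an arithmetic progression, or a thickened lattice in the second range, with absolute constants after thinning), the divisibility bookkeeping behind the displayed identity, and the elementary expansion estimate in the second range. Since $s,t$ are arbitrary reals while $P$ must be an integer, one works with $P=N^{L}$ for a fixed base $N$ and large $L$ and replaces $s,t$ by nearby multiples of $1/L$; this accounts for the $\delta^{\epsilon}$ and $-2\epsilon$ losses in the statement. The only genuinely delicate point is getting the case split at $s=2t/3$ right and, within the second case, checking that the thickening $\delta^{1+2t/3-s}$ is simultaneously large enough for $|\Theta|_\delta$ to reach $\delta^{-(s-t/3)}$ and small enough for $\pi_\theta(A)$ to occupy only $\lesssim\delta^{-s}$ balls --- these two requirements are compatible exactly because $2t/3<s\le1$. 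I do not expect an obstacle beyond careful bookkeeping.
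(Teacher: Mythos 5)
Your construction in the range $t/3 < s \le 2t/3$ is correct, and it is genuinely different from the paper's: the paper takes $A$ to be the full isotropic grid of spacing $\delta^{t/3}$ and obtains $\Theta$ from Jarnik's theorem — rationals $p/q$ with $q \le \delta^{-\alpha/2}$, $\alpha = s - \tfrac{t}{3} - \epsilon$, extracted as a $(\delta,\alpha,C_\epsilon)$-set of cardinality $\sim \delta^{\epsilon-\alpha}$ via the Frostman-subset lemma — whereas you use an anisotropic product grid together with the single-denominator progression $\{j/q\}$, $q \approx \delta^{-(s-t/3)}$, which is fully explicit and needs no Diophantine input. In that range your argument goes through (the thinning to spacing $\delta^{s-t/3-2\epsilon}$ is indeed needed for the Frostman constant to be $\delta$-independent, and the integrality bookkeeping is harmless).

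The genuine gap is in the complementary range $2t/3 < s \le 1$. The covering bound for $\pi_\theta(A)$ does hold on your thickened set, but $\Theta = \{\theta : \dist(\theta, \delta^{t/3}\mathbb{Z}) \le \delta^{1+2t/3-s}\}$ is not a $(\delta, s-\tfrac{t}{3}-2\epsilon, C)$-set with $C$ independent of $\delta$: a single constituent interval of length $L = \delta^{1+2t/3-s}$ carries $\approx L/\delta = \delta^{-(s-2t/3)}$ $\delta$-separated points, while the non-concentration condition at scale $r = L$ allows only $C(L/\delta)^{s-t/3-2\epsilon}$, and since $s-\tfrac{t}{3}-2\epsilon < 1$ this fails as $\delta \to 0$. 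Uniform thinning by $M \approx \delta^{-2\epsilon}$ does not repair it (the violation at scale $L$ is still by a factor $\approx \delta^{-(s-2t/3)(1-s+t/3)+O(\epsilon)}$), and in fact no subset of your thickened lattice of the intended cardinality can work: the lattice is covered by $\delta^{-t/3}$ balls of radius $L$, so any $(\delta,\rho,C)$-subset has at most $C\,\delta^{-t/3}(L/\delta)^{\rho}$ elements, which for $(s,t)=(1,1)$ and $\rho = \tfrac{2}{3}-2\epsilon$ is $\approx \delta^{-5/9}$, far below the required $\delta^{-(2/3-2\epsilon)}$. (Read literally the proposition places no lower bound on $|\Theta|$, but that reading is vacuous — a single point is a $(\delta,\alpha,1)$-set — and the intended conclusion, which the paper's proof delivers, is $|\Theta| \sim \delta^{-(s-t/3-2\epsilon)}$.) The reason your method caps out at $s = 2t/3$ is that a single denominator $q$ supplies only $q$ directions, forcing $q \approx \delta^{-(s-t/3)}$ and hence third-coordinate spacing $q^2\delta^s > 1$; the paper avoids the case split by using \emph{all} rationals with denominator at most $\delta^{-\alpha/2}$ against the full grid (so the denominator needed is only about the square root of $|\Theta|$, and the projections still lie in $\tfrac{1}{nq^2}\mathbb{Z}$ with $nq^2 \lesssim \delta^{-s+\epsilon}$), with Jarnik's theorem plus the Frostman-extraction lemma certifying that this multi-denominator set of directions is an honest $(\delta,\alpha,C_\epsilon)$-set of full size. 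Closing your gap would require an input of that kind, not a thinning of the single-$q$ thickened lattice.
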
 
\begin{proof} The version of Jarnik's theorem from \cite[p.~99]{wolff} says that if $\alpha \in (0,1)$ and $\{n_j\}_{j=1}^{\infty}$ is a sequence of integers that increases sufficiently rapidly to $\infty$, then the set 
\[ T = \left\{ \theta \in (1/4, 3/4) : \forall j \, \exists p_j,q_j \in \mathbb{N} \text{ with } |q_j| \leq n_j^{\alpha} \text{ and } \left\lvert  \theta- \frac{p_j}{q_j} \right\rvert \leq \frac{1}{n_j^2} \right\},  \]
has 
\begin{equation} \label{jarnik} \dim T = \alpha. \end{equation}Let $j$ be a large positive integer, let $n = n_j^{2t/3}$, and let
\[ A = \left\{ \left( \frac{k}{n}, \frac{l}{n}, \frac{m}{n} \right) : 0 \leq k, l , m \leq n \right\}. \]
Let $\epsilon >0$ be small, and define $\alpha$ by $s = \frac{ t+3\alpha}{3}+\epsilon$. Then $\alpha \in (0,1)$ since $t/3 < s \leq 1$ and $t \geq 0$. Let $\delta = n^{-3/t}$, so that $A$ is a $\left(\delta, t,C\right)$-set for some sufficiently large constant $C$. For any $\epsilon >0$, by \eqref{jarnik} and \cite[Lemma~3.13]{fasslerorponen}, there is a $(\delta, \alpha, C_{\epsilon})$ set $\Theta \subseteq (1/4,3/4)$ with $\left\lvert \Theta \right\rvert \sim \delta^{\epsilon-\alpha}$, such that every $\theta \in \Theta$ is a rational number $p/q$ in lowest form, where $p,q$ are positive integers and $q \leq n^{3\alpha/2t}$. For any $\theta = \frac{p}{q} \in \Theta$ in lowest form and $x = \left( \frac{k}{n}, \frac{l}{n}, \frac{m}{n} \right) \in A$,
\[ \pi_{\theta}(x) = \frac{ kq^2 + lpq + mp^2}{nq^2}. \]
Since $q \leq n^{3\alpha/2t}$, the denominator is a positive integer of size at most $n^{1+ \frac{3\alpha}{t}}$, and the numerator is an integer of size $\lesssim 1$ times the denominator, so the projection $\pi_{\theta}(A)$ can be covered by $\lesssim n^{1+\frac{3\alpha}{t}} = \delta^{-\frac{ t+3\alpha}{3}} = \delta^{-s+\epsilon}$ many $\delta$-balls. This proves \eqref{coveringnumber}.\end{proof} 

\begin{proposition} For any $t \in (0,3]$ and $s \in \left(0, \min\{1,t\}\right]$, there is a Borel set $A \subseteq B_3(0,1)$ with $\dim A = t$ such that
\begin{equation} \label{exceptionalsetbound} \dim \left\{ \theta \in [0, 2\pi) : \dim \pi_{\theta}(A) < s  \right\} \geq s-\frac{t}{3}. \end{equation} 
\end{proposition}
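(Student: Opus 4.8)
The plan is to upgrade the discrete counterexample of the preceding Proposition to a single continuous example, and to replace the discretised set of directions by a Jarn\'ik set. The case $s\le t/3$ is trivial, since then the asserted lower bound is nonpositive, so assume $t/3<s\le\min\{1,t\}$ and put $\alpha:=s-t/3\in(0,1)$; as before I work with the maps $\pi_\theta$ in place of $\rho_\theta$. First I would fix integers $n_1<n_2<\cdots$ increasing sufficiently rapidly (the rate being the one demanded by Jarn\'ik's theorem, in the form quoted in the previous proof), set $M_k:=n_k^{2t/3}$ and $r_k:=n_k^{-2}=M_k^{-3/t}$, and let $A$ be the nested ``thickened grid'' Cantor set obtained as follows: inside each radius-$r_{k-1}$ ball produced at stage $k-1$ one inserts the radius-$r_k$ balls centred at the points of $\frac1{M_k}\mathbb Z^3$ lying in it (arranging $M_{k-1}\mid M_k$), and $A$ is the intersection over $k$ of these stages (rescaled, if necessary, to lie in $B_3(0,1)$). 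By construction the $k$-th stage of $A$ is, after an affine rescaling, a copy of the grid example in the preceding Proposition with $n_j$ there taken to be $n_k$.

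The next step is to show $\dim_H A=t$. The upper bound is immediate: at scale $r_k$ the set $A$ is covered by the $M_k^3=r_k^{-t}$ balls of stage $k$. For the lower bound I would give $A$ the uniform Cantor measure $\mu$, which puts mass $M_k^{-3}=r_k^t$ on each stage-$k$ ball, and verify the Frostman bound $\mu(B(x,\rho))\lesssim\rho^t$ for every $\rho\in(0,1)$ by a short case analysis comparing $\rho$ with the scales $r_k$ and with the intermediate grid spacings $r_{k-1}M_k/M_{k-1}$; this uses only the relation $r_k=M_k^{-3/t}$ and imposes no growth condition on $n_k$. The mass distribution principle then gives $\dim_H A\ge t$. (The \emph{upper box} dimension of $A$ is genuinely larger than $t$ because of the intermediate scales, but this is irrelevant here.)

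Then, for fixed $\eta\in(0,\alpha)$, I would take $\Theta_\eta$ to be the Jarn\'ik set of those $\theta\in(1/4,3/4)$ for which, for \emph{every} $k$, there is a rational $p_k/q_k$ with $q_k\le n_k^{\alpha-\eta}$ and $|\theta-p_k/q_k|\le n_k^{-2}=r_k$; by Jarn\'ik's theorem $\dim_H\Theta_\eta=\alpha-\eta$ once $n_k$ grows fast enough. The point is that for $\theta\in\Theta_\eta$ the projection $\pi_\theta(A)$ is efficiently covered at \emph{every} scale $r_k$: since $\pi_\theta$ is $O(1)$-Lipschitz and $|\pi_\theta(y)-\pi_{p_k/q_k}(y)|\lesssim|\theta-p_k/q_k|\le r_k$ on $B_3(0,\sqrt3)$, and since $A$ is contained in the $r_k$-neighbourhood of $\frac1{M_k}\mathbb Z^3$ while $\pi_{p_k/q_k}$ maps $\frac1{M_k}\mathbb Z^3$ into $\frac1{M_kq_k^2}\mathbb Z$, one gets $\pi_\theta(A)\subseteq\mathcal N_{O(r_k)}\big(\frac1{M_kq_k^2}\mathbb Z\cap[0,3]\big)$, which is covered by $\lesssim M_kq_k^2\le n_k^{2t/3}n_k^{2(\alpha-\eta)}=n_k^{2(s-\eta)}=r_k^{-(s-\eta)}$ balls of radius $O(r_k)$. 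Hence, with $\gamma:=s-\eta/2$, $\mathcal H^{\gamma}_{O(r_k)}(\pi_\theta A)\lesssim r_k^{-(s-\eta)}r_k^{\gamma}=r_k^{\eta/2}\to0$ as $k\to\infty$; since $\delta\mapsto\mathcal H^\gamma_\delta$ is non-increasing, $k\mapsto\mathcal H^\gamma_{O(r_k)}(\pi_\theta A)$ is non-decreasing and tends to $0$, so it vanishes identically and $\mathcal H^\gamma(\pi_\theta A)=0$, giving $\dim_H\pi_\theta(A)\le\gamma<s$. Thus $\Theta_\eta$ lies inside the exceptional set, so $\dim\{\theta:\dim\pi_\theta(A)<s\}\ge\alpha-\eta$, and letting $\eta\to0$ yields \eqref{exceptionalsetbound}.

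I expect the main obstacle to be reconciling two pressures on the sequence $\{n_k\}$ and, more seriously, producing the \emph{strict} inequality $\dim\pi_\theta(A)<s$. The Frostman bound for $A$ tolerates arbitrary growth of $n_k$, while Jarn\'ik's theorem (applied with parameters $\alpha-\tfrac1m$, $m=1,2,\dots$) needs $n_k$ to grow rapidly, so these are compatible after a diagonalisation. The real subtlety is that the ``balanced'' choice $q_k\le n_k^{\alpha}$ (i.e.\ $\eta=0$) only yields covering exponent exactly $s$, hence merely $\dim\pi_\theta(A)\le s$ and $\mathcal H^s(\pi_\theta A)=0$, which is not enough; the remedy is to keep $\eta>0$ fixed, accept $\dim\Theta_\eta=\alpha-\eta$, and recover the sharp bound $s-t/3$ only in the limit $\eta\to0$ — which is exactly why the continuous statement has the strict inequality buried inside a lower bound on Hausdorff dimension rather than an equality. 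A lesser nuisance is the case analysis at intermediate scales in the proof of $\dim_H A=t$, but the mass distribution principle circumvents it cleanly.
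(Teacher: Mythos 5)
Your proposal is correct and follows essentially the same route as the paper: an intersection of shrinking neighbourhoods of grids $\tfrac{1}{M_k}\mathbb{Z}^3$ for the set $A$, a Jarn\'ik set of well-approximable directions for $\Theta$, the observation that $\pi_{p/q}$ maps $\tfrac{1}{M_k}\mathbb{Z}^3$ into $\tfrac{1}{M_kq^2}\mathbb{Z}$ to get the covering of $\pi_\theta(A)$ at every scale $r_k$, and a limiting parameter ($\eta\to0$ in your version, $\epsilon\to0$ in the paper's) to recover the sharp exponent $s-t/3$; the paper simply cites Falconer's Theorems 8.15b and 8.16b for the dimensions of $A$ and $\Theta$ rather than rebuilding the Frostman measure and the Jarn\'ik set by hand. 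One side-claim is wrong, though harmlessly so: for $t<3$ the uniform measure on your nested Cantor set does \emph{not} satisfy $\mu(B(x,\rho))\lesssim\rho^{t}$ with a uniform constant for arbitrary $\{n_k\}$ — the mass of a stage-$k$ ball is $r_k^{t}$ times a factor $\prod_{j<k}M_j^{(9-3t)/t}$ that grows with $k$ — so the mass distribution principle only yields $\dim_H A\ge t-\epsilon$ for each $\epsilon>0$ under a rapid-growth condition on $n_k$; since you already impose rapid growth for Jarn\'ik's theorem, the conclusion $\dim_H A=t$ survives.
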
 \begin{sloppypar}
\begin{proof} Let $\epsilon>0$ be very small, and define $\kappa \in (0,1)$ 
by
\begin{equation} \label{kappadefn} t-\frac{2\kappa t}{3} = s - \epsilon. \end{equation}
By the density of $\mathbb{Q}$ in $\mathbb{R}$, we can take $\epsilon$ arbitrarily small above that makes $\kappa$ rational, so below we will assume that $\kappa$ is rational. Let $\{n_j\}$ be a strictly increasing sequence of positive integers, such that $n_j^{1-\kappa}$ is a positive integer for each $j$ (this is possible since $\kappa$ is rational) with $n_{j+1} \geq n_j^j$ for all $j$ (which implies that $n_{j+1}^{1-\kappa} \geq (n_j^{1-\kappa})^j$ for all $j$). Let  
\[ A = \left\{ (x_1,x_2,x_3) \in B_3(0,1) : \forall i,j, \exists b_{i,j} \in \mathbb{Z} \text{ with } \left\lvert x_i - \frac{b_{i,j}}{n_j} \right\rvert \leq n_j^{-3/t}. \right\} \] 
Then $\dim A = t$ by Theorem~8.15b in \cite[p.~132]{falconer} (more precisely, we need a straightforward generalisation of \cite[Theorem~8.15b]{falconer} to $\mathbb{R}^3$ instead of $\mathbb{R}^2$). 
Let 
\begin{multline*} \Theta = \bigg\{ \theta \in [0,1] : \left\lvert \theta - \frac{b}{n} \right\rvert \leq n_j^{-3/t} \text{ for some $b \in \mathbb{N}$ and $n$ with } 1 \leq n \leq n_j^{1-\kappa},\\ \text{ for infinitely many $j$}\bigg\}. \end{multline*}
Define $\beta = \frac{3}{(1-\kappa)t}-1$. Then $\beta > 1$ since $s \leq 1$. Then $\dim \Theta = \frac{2(1-\kappa)t}{3} = \frac{2}{1+\beta}$ by Theorem~8.16b from \cite[p.~134]{falconer}. Let $\theta \in \Theta$, and let $j$ be such that $\left\lvert \theta - \frac{b}{n} \right\rvert \leq n_j^{-3/t} \text{ for some $b \in \mathbb{N}$ and $n$ with } 1 \leq n \leq n_j^{1-\kappa}$. If $x=(x_1,x_2,x_3) \in A$, then for any $i \in \{1,2,3\}$ we can find $b_{i,j} \in \mathbb{Z}$$ \text{ with } \left\lvert x_i - \frac{b_{i,j}}{n_j} \right\rvert \leq n_j^{-3/t}$. If we let $x' = \left( \frac{b_{1,j}}{n_j}, \frac{b_{2,j}}{n_j}, \frac{b_{3,j}}{n_j} \right)$, and we let $\theta' = \frac{b}{n}$, then 
\[ \pi_{\theta'}(x') = \frac{b_{1,j} n^2 + b_{2,j} bn + b_{3,j} b^2}{n^2n_j}. \] 
There are $\lesssim n_j^{3-2\kappa}$ possibilities for the above fraction, since the numerator is an integer of size $\lesssim$ of the denominator, and $n \le n_j^{1-\kappa}$. But 
\[ \left\lvert \pi_{\theta'}(x') - \pi_{\theta}(x) \right\rvert \lesssim |x-x'| + |\theta-\theta'| \lesssim n_j^{-3/t},  \]
so $\pi_{\theta}(A)$ can be covered by $\lesssim n_j^{3-2\kappa}$ balls of radius $n_j^{-3/t}$. Since we can take $j$ arbitrarily large in the argument, this implies (by the definition of Hausdorff dimension\footnote{This actually yields the same upper bound for the lower Minkowski dimension, which in general is larger than the Hausdorff dimension.}) that 
\[ \dim \pi_{\theta}(A) \leq t-\frac{2\kappa t}{3}. \]
Recalling \eqref{kappadefn}, this yields
\[ \left\{ \theta \in [0, 2\pi) : \dim \pi_{\theta}(A) \leq s-\epsilon \right\} \geq s-\epsilon - \frac{t}{3}, \]
for arbitrarily small $\epsilon$ ensuring that $\kappa$ in \eqref{kappadefn} is rational. Since $\epsilon$ can be taken arbitrarily small, this implies \eqref{exceptionalsetbound} and finishes the proof.\end{proof} \end{sloppypar}

\appendix 

\section{Sine wave incidences} 

In this section we prove some incidence estimates specific to sine waves. The first incidence estimate is equivalent to the one obtained via variable coefficient local smoothing in this special case, but here we give a simpler proof using small cap decoupling.

We then prove an $L^2$-based incidence bound which is stronger in some cases than the one obtained from $L^2$ bounds for Fourier integral operators, as it uses an additional fractal Strichartz estimate as input (explained below). Even though these improve the $L^2$-based incidence bounds in this case, they do not yield any improvement to lower bounds of sine wave Furstenberg sets. 

\subsection{Sine wave incidences via small cap decoupling}

\begin{theorem} \label{theoremincidence2} Let $\alpha \in [0, 2]$, $\beta \in [0,3]$, and $\delta \in (0,1)$. Let $P$ be a $\left( \delta, \beta, K_{\beta,P,\delta}\right)$-set of $\delta$-neighbourhoods of sine waves 
\[ \left\{  \left(\theta, a \cos \theta + b \sin \theta +c\right):  \theta \in [0, 2\pi) \right\}, \]
where $(a,b,c) \in B_3(0,1)$. Let $\mathbb{T}$ be a $\left( \delta, \alpha, K_{\alpha, \mathbb{T}, \delta}\right)$-set of $\delta$-discs in the plane.  If $3\alpha + \beta \leq 7$, then for any $\epsilon >0$
\begin{equation} \label{incidencebound7} I(P, \mathbb{T}) \leq C_{\epsilon} \delta^{-\epsilon} K_{\alpha,\mathbb{T},\delta}^{3/4} K_{\beta,P,\delta}^{1/4}  |P|^{3/4} |\mathbb{T}|^{1/4} \delta^{-3/4}. \end{equation}
If $3\alpha+\beta >  7$, then with $\lambda = \frac{4}{3\alpha+\beta-3} <1$, for any $\epsilon >0$,
\begin{equation} \label{incidencebound8} I(P, \mathbb{T}) \leq  C_{\epsilon} \delta^{-\epsilon}   K_{\alpha,\mathbb{T},\delta}^{3\lambda/4} K_{\beta,P,\delta}^{\lambda/4}  |P|^{1-\frac{\lambda}{4}} |\mathbb{T}|^{1-\frac{3\lambda}{4}} \delta^{-3\lambda/4}. \end{equation}
\end{theorem}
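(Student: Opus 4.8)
The plan is to prove \eqref{incidencebound7} by induction on $\delta$, following the high--low scheme of Theorem~\ref{theoremincidencecircle} but with \emph{small cap decoupling for the light cone} playing the role that the Guth--Wang--Zhang local smoothing inequality plays there; the estimate \eqref{incidencebound8} for $3\alpha+\beta>7$ then follows from \eqref{incidencebound7} by interpolation with the trivial bound $I(P,\mathbb{T})\le|P||\mathbb{T}|$, exactly as in the proof of Theorem~\ref{theoremincidencecircle}. First I would dualise, putting the problem into the same geometric setup as Theorem~\ref{trilinearincidence}: a point $(\theta_0,y_0)$ lies within $\sim\delta$ of the sine wave attached to $(a,b,c)$ precisely when $(a,b,c)$ lies within $\sim\delta$ of the plane $\{w:\langle w,(\cos\theta_0,\sin\theta_0,1)\rangle=y_0\}$, whose normal is a light direction, so $I(P,\mathbb{T})$ is comparable to the number of incidences between a $(\delta,\beta)$-set of $\delta$-balls in $\R^3$ (the points $(a,b,c)$) and a $(\delta,\alpha)$-set of $\delta$-slabs normal to light rays. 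Hölder's inequality and scaling localise everything to a fixed unit ball, and the base case $\delta\sim1$ is the trivial bound.

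\textbf{High--low decomposition.} With $f=\sum_{T\in\mathbb{T}}\chi_{10T}$ a smooth sum of slab bumps, one has $I(P,\mathbb{T})\lesssim\delta^{-3-O(\epsilon^2)}\sum_{B}\int_{B(x_B,\delta^{1+O(\epsilon^2)})}|f|$, and the one input special to sine waves is that $\widehat f$ is supported in an $O(1)$-neighbourhood of the truncated light cone: each $\widehat{\chi_{10T}}$ concentrates on the $\delta^{-1}$-long dual tube through the origin in the light direction $\gamma(\theta_T)$, and the union of these tubes over all light directions is exactly the cone, truncated at radius $\sim\delta^{-1}$ and thickened by $O(1)$. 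Taking $\phi,\psi$ as in Theorem~\ref{theoremincidencecircle} and $\widehat{f_h}=\widehat f\,\phi$, $\widehat{f_l}=\widehat f\,\psi$ gives $f=f_h+f_l$ \emph{exactly} (there is no error term, unlike the circular case where the asymptotics of $\widehat\sigma$ produce one), so $I(P,\mathbb{T})$ is bounded by a high term plus a low term, and I would treat whichever dominates.

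\textbf{Low case.} Here I would argue verbatim as in Theorem~\ref{theoremincidencecircle}: with $S=\delta^{-\epsilon^2-\epsilon^4}$, Hausdorff--Young gives $|\chi_{10T}\ast\widecheck\psi|\lesssim S^{-1}\delta^{-O(\epsilon^4)}\chi_{ST}$, where $ST$ is $T$ thickened by $S$, so the low term is $\lesssim S^{-1}\delta^{-O(\epsilon^4)}I(P_S,\mathbb{T}_S)$ with $P_S,\mathbb{T}_S$ the configurations at scale $\delta S$. Applying the inductive hypothesis at scale $\delta S$ together with $K_{\beta,P_S,\delta S}\lesssim S^{\beta}K_{\beta,P,\delta}$ and $K_{\alpha,\mathbb{T}_S,\delta S}\lesssim S^{\alpha}K_{\alpha,\mathbb{T},\delta}$ produces the power $S^{(3\alpha+\beta-7)/4-\epsilon}$, which is a negative power of $S$ under the hypothesis $3\alpha+\beta\le7$; this eats the $\delta^{-O(\epsilon^4)}$ and closes the induction in the low case.

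\textbf{High case and the main obstacle.} When the high term dominates, Hölder's inequality (splitting $\sum_B\chi_B$ into an $L^{4/3}$ factor and an $L^4$ factor) reduces matters to $I(P,\mathbb{T})\lesssim\delta^{-3/4-O(\epsilon^2)}K_{\beta,P,\delta}^{1/4}|P|^{3/4}\,\|f_h\|_4$, so it suffices to prove $\|f_h\|_4\lesssim_\epsilon\delta^{-\epsilon}K_{\alpha,\mathbb{T},\delta}^{3/4}|\mathbb{T}|^{1/4}$. Rescaling frequencies by $\delta^{-1}$ places $\widehat{f_h}$ in the $\delta$-neighbourhood of the unit light cone; since the slab directions are essentially arbitrary rather than pre-organised into $\delta^{1/2}$-sectors, the relevant decomposition is into the $\sim\delta^{-1}$ small caps $\gamma$ of angular width $\delta$ (a cap $\gamma$ collecting the slabs whose light direction lies in $\gamma$). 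Small cap decoupling for the cone in $\R^3$ --- which, via the cylindrical structure of the cone, follows from small cap decoupling for the circle in $\R^2$ --- bounds $\|f_h\|_4$ by the appropriate $\ell^p$-sum of the $\|f_{h,\gamma}\|_4$; within one small cap the slabs are parallel up to angle $\delta$, so each $\|f_{h,\gamma}\|_4$ is controlled by an elementary count of $4$-fold slab overlaps, for which the $(\delta,\alpha)$-set hypothesis on $\mathbb{T}$ supplies a factor $K_{\alpha,\mathbb{T},\delta}^3$ per cap. The delicate point --- and the part I expect to be the main obstacle --- is the numerology of this step: naive inputs (flat decoupling into $\delta$-caps, or Córdoba $L^4$ decoupling into $\delta^{1/2}$-sectors followed by a trivial split) overshoot the target $\delta^{-3/4}$, so one genuinely needs the \emph{sharp} small cap cone decoupling together with an efficient overlap count, which is precisely where the strength of Guth--Wang--Zhang-type local smoothing is being used. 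That the exponent is exactly $3/4$ is not accidental: it is the smallest value consistent with the low-case induction (which closes for any exponent $\ge3/4$), so the high case must, and with the sharp decoupling does, deliver $\delta^{-3/4}$.
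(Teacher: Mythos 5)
Your proposal follows essentially the same route as the paper's proof: dualise to balls and light-plane slabs in $\mathbb{R}^3$, run the high--low induction with the low case closed exactly as in Theorem~\ref{theoremincidencecircle}, and in the high case reduce via H\"older to the $L^4$ bound $\|f_h\|_4\lesssim\delta^{-\epsilon}K_{\alpha,\mathbb{T},\delta}^{3/4}|\mathbb{T}|^{1/4}$, proved by the sharp small cap decoupling for the cone (the paper cites Gan--Guth--Maldague, Theorem~4 with $t=1$, giving the constant $\delta^{-1}$ at the fourth power) followed by the per-cap $L^\infty_{}{}^{3}\times L^1$ overlap count that yields $K_{\alpha,\mathbb{T},\delta}^3\,\delta\,|\mathbb{T}_\theta|$. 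Your identification of the tight numerology and of the sharp small cap input as the crux is exactly right, so the argument is correct and matches the paper's.
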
 
\begin{proof}  A few times throughout the proof, we will use that \eqref{incidencebound7} implies \eqref{incidencebound8} if $3\alpha+\beta > 7$; the proof of this is similar to the start of the proof of Theorem~\ref{theoremincidencecircle}. 

Let $\Phi$ be the function which sends the $\delta$-neighbourhood of a sine curve $a \cos \theta +b \sin \theta+c$ in $[0, 2\pi] \times \mathbb{R}$ to the $10\delta$-ball centred at $(a,b,c) \in \mathbb{R}^3$. Let $\Psi$ be the function which sends the $\delta$-disc centred at $(\theta, t) \in [0, 2\pi] \times \mathbb{R}$ to the $10\delta$-neighbourhood of the plane $\frac{t}{2} \gamma(\theta) + \gamma(\theta)^{\perp}$, where $\gamma(\theta) =  \left( \cos \theta, \sin \theta, 1 \right)$. It is straightforward to check that
\[ I(P, \mathbb{T} ) \lesssim I(\Phi(P), \Psi(\mathbb{T})). \]
Therefore, we can assume that $P$ is a $\left( \delta, \beta, K_{\beta,P,\delta}\right)$-set of $\delta$-balls in $B_3(0,1)$, and that $\mathbb{T}$ be a $\left( \delta, \alpha, K_{\alpha, \mathbb{T}, \delta}\right)$-set of $\delta$-neighbourhoods of light planes in $\mathbb{R}^3$ (see Subsection~\ref{notation} for the definition of a $\left( \delta, \alpha, K_{\alpha, \mathbb{T}, \delta}\right)$-set), and we will prove the theorem under this assumption. We remark that this kind of duality was introduced in \cite{kaenmakiorponenvenieri} (see also \cite[Eq.~2.12]{pramanikyangzahl}).  

Let $f = \sum_{B \in P} \chi_B$ and let $g = \sum_{T \in \mathbb{T}} \chi_T$ where, for minor technical reasons, $\chi_T$ is a smooth bump function equal to $1$ on $T$ and vanishing outside $2T$. Then
\begin{equation} \label{incidenceintegral2} I(P, \mathbb{T}) \lesssim \delta^{-3} \int f g. \end{equation} Let $\phi$ be a smooth bump function equal to 1 on $B_3\left(0, \delta^{-1-\epsilon^2}\right) \setminus B_3\left(0, \delta^{\epsilon^2-1}\right)$,  vanishing outside $B_3\left(0, 2\delta^{-1-\epsilon^2} \right)$ and vanishing in $B_3\left(0, \delta^{\epsilon^2-1}/2\right)$. Let $\psi$ be a smooth bump function supported equal to $1-\phi$ in $B_3\left(0, \delta^{\epsilon^2-1} \right)$ and extended by zero outside this ball. We define the ``high'' and ``low'' parts $g_h$, $g_l$ of $g$ by
\[ \widehat{g_h} = \widehat{g} \phi, \qquad \widehat{g_l} = \psi \widehat{g}, \]
so that, apart from a very small error term, $g = g_h + g_l$. By \eqref{incidenceintegral2}, 
\begin{equation} \label{highlow2} I(P, \mathbb{T}) \lesssim \delta^{-3}  \int \left\lvert f g_h \right\rvert + \delta^{-3} \int \left\lvert f g_l \right\rvert. \end{equation}
If the high part dominates \eqref{highlow2}, then
\begin{multline*}  I(P, \mathbb{T}) \lesssim \delta^{-3} \left(\int |f|^{4/3}\right)^{3/4} \left( \int \left\lvert  g \ast \widecheck{\phi}\right\rvert^4 \right)^{1/4} \\
\lesssim \delta^{-3/4}K_{\beta,P,\delta}^{1/4} |P|^{3/4} \left( \int \left\lvert  g \ast \widecheck{\phi}\right\rvert^4 \right)^{1/4}. \end{multline*}
Therefore, it suffices (in the high case) to show that 
\begin{equation} \label{highcasereq}  \left( \int \left\lvert  g \ast \widecheck{\phi}\right\rvert^4 \right)^{1/4} \lesssim \delta^{-O(\epsilon^2) } K_{\alpha,\mathbb{T},\delta}^{3/4} |\mathbb{T}|^{1/4}, \end{equation}
where we do not consider \eqref{incidencebound8} separately; since \eqref{incidencebound7} implies \eqref{incidencebound8} when $3\alpha+\beta >7$.  By the small cap decoupling theorem from \cite[Theorem~4]{ganguthmaldague} with $t=1$,
\begin{equation} \label{smallcap} \int \left\lvert  g \ast \widecheck{\phi}\right\rvert^4  \lesssim \delta^{-O(\epsilon^2)}  \delta^{-1} \sum_{\theta \in \Theta} \int \left\lvert \sum_{T \in \mathbb{T}_{\theta} } \chi_T \ast \widecheck{\phi} \right\rvert^4,  \end{equation}
where $\Theta$ is a maximal $\delta$-separated subset of $[0,2\pi)$, and $\mathbb{T}_{\theta}$ consists of those $T \in \mathbb{T}$ with corresponding angle $\theta_T$ within $\delta$ of $\theta$. By Young's convolution inequality applied to \eqref{smallcap}, 
\[ \int \left\lvert  g \ast \widecheck{\phi}\right\rvert^4  \lesssim \delta^{-O(\epsilon^2)}  \delta^{-1} \sum_{\theta \in \Theta} \int \left\lvert \sum_{T \in \mathbb{T}_{\theta} } \chi_T  \right\rvert^4. \]
By the $\left( \delta, \alpha, K_{\alpha, \mathbb{T}, \delta} \right)$ assumption on $\mathbb{T}$, 
\[ \int \left\lvert  g \ast \widecheck{\phi}\right\rvert^4 \lesssim  \delta^{-O(\epsilon^2)}  \delta^{-1}  K_{\alpha,\mathbb{T},\delta}^{3} \sum_{\theta \in \Theta} \sum_{T \in \mathbb{T}_{\theta} } \int \chi_T \lesssim   \delta^{-O(\epsilon^2)}  K_{\alpha,\mathbb{T},\delta}^{3} \left\lvert \mathbb{T} \right\rvert .\]
This implies \eqref{highcasereq}, which finishes the proof if the high part dominates \eqref{highlow2}.

Now suppose that the low part dominates \eqref{highlow2}. This will be similar to the proof of the low case in Theorem~\ref{theoremincidencecircle}. Let $S = \delta^{-\epsilon^2 - \epsilon^4}$. By Hausdorff-Young, 
\[ \left\lVert \chi_T \ast \widecheck{\psi} \right\rVert_{\infty} \lesssim \delta^{-O(\epsilon^4)} S^{-1}. \] 
Hence, by \eqref{highlow2},
\begin{multline*} I(P, \mathbb{T}) \lesssim \delta^{-3} \int \left\lvert f g_{l} \right\rvert \lesssim \delta^{-3} \delta^{-O(\epsilon^4)} \int\left( \sum_{B \in P } \chi_B \right) \left( \sum_{T \in \mathbb{T}_S} S^{-1}\chi_{T_{S}} \right) \\
\lesssim  S^{-1} \delta^{-O(\epsilon^4)} I\left(P_S, \mathbb{T}_S\right), \end{multline*}
apart from a negligible error term, where the balls in $P_S$ are the balls from $P$ scaled by $S$, $\mathbb{T}_S$ consists of $\delta S$ neighbourhoods of the planes from $\mathbb{T}$, and $\chi_{T_S}$ now just means the indicator function of $T_S$. If we assume by induction on $\delta$ that the bound holds at scale $\delta S$ and apply it to the above, we get 
\begin{multline} \label{secondlastsine} I(P, \mathbb{T}) \lesssim \delta^{-O(\epsilon^4) }S^{-1} \times\\
\Bigg\{\begin{aligned} &(\delta S)^{-3/4-\epsilon}  K_{\alpha, \mathbb{T}_S, \delta S}^{3/4} K_{\beta,P_S,\delta S}^{1/4} |P_S|^{3/4} |\mathbb{T}_S|^{1/4} && 3\alpha + \beta \leq 7 \\
&(\delta S)^{-3\lambda/4-\epsilon} K_{\alpha, \mathbb{T}_S, \delta S}^{3\lambda/4}  K_{\beta,P_S,\delta S}^{\lambda/4}|P_S|^{1-\frac{\lambda}{4}} |\mathbb{T}_S|^{1-\frac{3\lambda}{4}} && 3\alpha + \beta >7. \end{aligned} \end{multline}
By the inequalities
\[ K_{\alpha, \mathbb{T}_S, \delta S} \lesssim S^{\alpha} K_{\alpha, \mathbb{T}, \delta}, \qquad K_{\beta,P_S,\delta S} \lesssim S^{\beta} K_{\beta,P,\delta},\]
and since $|\mathbb{T}_S| = |\mathbb{T}|$ and $|P_S| = |P|$, 
\eqref{secondlastsine} becomes
\begin{multline*}  I(P, \mathbb{T}) \lesssim \delta^{\epsilon^3-O(\epsilon^4) } \times\\
\Bigg\{\begin{aligned} & S^{-\frac{7}{4} + \frac{3\alpha}{4} + \frac{\beta}{4}} \delta^{-\frac{3}{4}-\epsilon}  K_{\alpha, \mathbb{T}, \delta}^{3/4} K_{\beta,P,\delta}^{1/4} |P|^{3/4} |\mathbb{T}|^{1/4} && 3\alpha + \beta \leq 7 \\
& S^{-1-\frac{3\lambda}{4} +\frac{3\lambda \alpha}{4} + \frac{\lambda \beta}{4} }\delta^{-
\frac{3\lambda}{4} - \epsilon} K_{\alpha, \mathbb{T}, \delta}^{3\lambda/4}  K_{\beta,P,\delta}^{\lambda/4}|P|^{1-\frac{\lambda}{4}} |\mathbb{T}|^{1-\frac{3\lambda}{4}} && 3\alpha + \beta >7. \end{aligned} \end{multline*}
In either case, the exponent of $S$ is non-positive, so by induction the conclusion of the theorem holds in the low case, and this finishes the proof. 

\end{proof}

\subsection{\texorpdfstring{$L^2$}{} methods for sine waves}

\begin{theorem}[{\cite[Theorem~3]{erdogan}}] \label{fractalstrichartz} Let $\alpha \in [0, 3]$. For any $R \geq 1$ and any $f$ with $\widehat{f}$ supported in $\mathcal{N}_1(R \Gamma)$, 
\[ \|f\|_{L^2(\mu)} \leq C_{\alpha,s} c_{\alpha}(\mu)^{1/2} R^s \|f\|_{L^2(\mathbb{R}^3 ) }, \]
for any Borel measure $\mu$ on $B_3(0,1)$, for any
\[ s > \frac{2-\beta(\alpha)}{2}, \]
where 
\[ \beta(\alpha) = \begin{cases} \alpha & \alpha \leq 1/2 \\
1/2 & 1/2 \leq \alpha \leq 1 \\
\alpha/2 & 1 \leq \alpha \leq 2 \\
\alpha-1 & 2 \leq \alpha \leq 3, \end{cases} \] 
and $c_{\alpha}(\mu) = \sup_{x \in \mathbb{R}^3,r>0} \frac{ \mu(B(x,r) ) }{r^{\alpha} }$.
\end{theorem}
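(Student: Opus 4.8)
The plan is to deduce this from the sharp bilinear restriction estimate for the light cone in $\mathbb{R}^3$ (Wolff, with the endpoint range later clarified by Tao), via a Córdoba-type reverse square function argument together with a pigeonholing over scales governed by the Frostman exponent $\alpha$; the piecewise-linear profile $\beta(\alpha)$ will then come out of interpolating the bilinear-driven estimate against two essentially trivial bounds. First I would rescale so that $R\Gamma$ becomes the truncated unit cone while $\mu$ becomes a measure on $B_3(0,R)$ with $\mu(B(x,r)) \lesssim c_\alpha(\mu)\,r^\alpha$; a standard $TT^*$ and duality reduction turns the claim into a weighted extension estimate $\|\widehat{g\,d\sigma}\|_{L^2(\mu)} \lesssim c_\alpha(\mu)^{1/2} R^{s+\epsilon}\|g\|_{L^2(d\sigma)}$ for $g$ on the unit cone.

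Next I would decompose the cone into boundedly overlapping caps $\theta$ of angular aperture $R^{-1/2}$, write $\widehat{g\,d\sigma} = \sum_\theta E_\theta g$, and expand $|\widehat{g\,d\sigma}|^2 = \sum_{\theta,\theta'} E_\theta g\,\overline{E_{\theta'} g}$. Split the pairs into the ``diagonal'' part $\dist(\theta,\theta') \lesssim R^{-1/2}$ and transverse families indexed by the angular separation $2^{-j}$, with $R^{-1/2} \lesssim 2^{-j} \lesssim 1$. For a fixed transverse scale the Fourier supports of the products $E_\theta g\,\overline{E_{\theta'} g}$ over $2^{-j}$-separated pairs of caps are boundedly overlapping, so by Plancherel the transverse contribution reduces to summing $\|E_\theta g\,\overline{E_{\theta'} g}\|_{L^2(\mu)}^2$. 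Each such product is frequency-localised to a plank $T$ (with smallest side $\sim R^{-1}$, the remaining sides controlled by $2^{-j}$), and one uses $\|h\|_{L^2(\mu)}^2 \le \|w_T * \mu\|_\infty\,\|h\|_{L^2(\mathbb{R}^3)}^2$ with $w_T$ an $L^1$-normalised weight adapted to the dual box, the Frostman condition bounding $\|w_T*\mu\|_\infty$ by $c_\alpha(\mu)$ times an appropriate power of $R$ and $2^{j}$. Feeding in the transverse bilinear cone estimate $\|E_\theta g\,\overline{E_{\theta'} g}\|_{L^2(\mathbb{R}^3)} \lesssim (\text{gain in }2^{-j})\,\|E_\theta g\|_2\|E_{\theta'} g\|_2$, then applying Cauchy--Schwarz in $\theta$ and summing the geometric series in $j$, closes the transverse part; the diagonal part is elementary, since each cap has $O(1)$ neighbours within $R^{-1/2}$ and on a single plank the $L^2(\mu)$-bound is Bernstein plus the Frostman condition restricted to that plank.

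The estimate produced this way is sharp only on a subinterval of $\alpha \in [0,3]$; to recover the full $\beta(\alpha)$ I would take the better of this bound, the Bernstein bound $\|f\|_{L^2(\mu)} \le c_\alpha(\mu)^{1/2}(\mathrm{vol}\,\mathcal{N}_1(R\Gamma))^{1/2}\|f\|_2$ (sharp as $\alpha \to 0$), and the trivial $L^2(\mathbb{R}^3)$ bound (sharp at $\alpha = 3$), using also that the cone contains a line to gain one ``free'' dimension, which is what produces the break at $\alpha = 2$; interpolating these (either by real interpolation of the weighted $L^2$ inequalities or simply by casework in $\alpha$) yields the stated exponent on each subinterval.

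The main obstacle I anticipate is the transverse step: verifying the boundedly-overlapping property of the product Fourier supports at each separation scale (a geometric statement about sums of two transverse conical caps in $\mathbb{R}^3$), and tracking the dependence of $\|w_T * \mu\|_\infty$ on the plank dimensions precisely enough that the $c_\alpha(\mu)$ factor emerges with exactly the right power and only an $R^{\epsilon}$ loss. Getting the transition points of $\beta(\alpha)$ exactly right is where the flat direction of the cone and the choice of which estimates to interpolate have to be handled with care.
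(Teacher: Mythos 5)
This statement is not proved in the paper at all: it is quoted verbatim as \cite[Theorem~3]{erdogan} and used as a black box in the proof of Theorem~\ref{theoremincidence}, so there is no in-paper argument to compare yours against. Measured against the actual source, your outline does reproduce the architecture of Erdoğan's proof: a Whitney/Córdoba bilinear decomposition of the cone into $R^{-1/2}$-caps, bounded overlap of the product Fourier supports at each transversality scale, a weighted $L^2$ localisation $\|h\|_{L^2(\mu)}^2 \lesssim \|w_T \ast \mu\|_\infty \|h\|_2^2$ fed by the Frostman condition, and the Wolff--Tao bilinear cone extension estimate for the transverse part. So the choice of tools is the right one.

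However, as written this is a plan rather than a proof, and the part you defer is where the theorem actually lives. The four regimes of $\beta(\alpha)$ do not come out of ``taking the better of'' the bilinear bound, the Bernstein bound, and the trivial $L^2$ bound: the two elementary bounds you name give $s=1$ (i.e.~$\beta=0$) and, via local constancy on $R^{-1}$-balls, $s=(3-\alpha)/2$ (i.e.~$\beta=\alpha-1$), and no interpolation of these with a single bilinear estimate produces the slope-$1$ segment $\beta(\alpha)=\alpha$ on $[0,1/2]$, the plateau on $[1/2,1]$, and the slope-$1/2$ segment on $[1,2]$ simultaneously. Each break point reflects a different term (diagonal plank contribution, the $j$-sum failing to be summable and being dominated by an endpoint scale, the flat direction of the cone) becoming dominant, and you have not computed the power of $2^{j}$ and of $R$ in $\|w_T \ast \mu\|_\infty$ for the planks dual to a sum of two $2^{-j}$-separated caps, which is the computation that decides this. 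The remark that ``the cone contains a line'' explains the break at $\alpha=2$ is also not substantiated. So: correct strategy, genuinely incomplete execution; to turn this into a proof you would need to carry out the exponent bookkeeping in the transverse sum and in the diagonal term explicitly and verify that the maximum of the resulting quantities is exactly $\frac{2-\beta(\alpha)}{2}$ on each of the four subintervals.
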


\begin{theorem} \label{theoremincidence}  Let $\alpha \in [0, 2]$, $\beta \in [0,3]$, and $\delta \in (0,1)$. Let $P$ be a $\left( \delta, \beta, K_{\beta,P,\delta}\right)$-set of $\delta$-neighbourhoods of sine waves 
\[ \left\{  \left(\theta, a \cos \theta + b \sin \theta +c\right):  \theta \in [0, 2\pi) \right\}, \]
where $(a,b,c) \in B_3(0,1)$. Let $\mathbb{T}$ be a $\left( \delta, \alpha, K_{\alpha, \mathbb{T}, \delta}\right)$-set of $\delta$-discs in the plane. If $\alpha + \beta \leq 4$, then for any $\epsilon >0$,
\begin{equation} \label{incidencebound} I(P, \mathbb{T}) \leq C_{\epsilon} \delta^{-\epsilon} K_{\alpha,\mathbb{T},\delta}^{1/2} K_{\beta,P,\delta}^{1/2}  |P|^{1/2} |\mathbb{T}|^{1/2} \times \begin{cases} \delta^{-\frac{1}{2}} & 0 \leq \beta \leq 1/2 \\
\delta^{-\frac{1}{4} - \frac{\beta}{2}} & 1/2 \leq \beta \leq 1 \\
\delta^{-\frac{1}{2} - \frac{\beta}{4}}  & 1 \leq \beta \leq 2 \\
\delta^{-1}  & 2 \leq \beta \leq \min\{ 3, 4-\alpha \}. \end{cases}  \end{equation}
If $\alpha + \beta >4$, then for any $\epsilon >0$
\begin{equation} \label{incidencebound17} I(P, \mathbb{T}) \leq C_{\epsilon}\delta^{-\epsilon} K_{\alpha,\mathbb{T},\delta}^{\lambda} K_{\beta,P,\delta}^{\lambda}  |P|^{1-\lambda} |\mathbb{T}|^{1-\lambda} \delta^{-2\lambda}, \end{equation}
where $\lambda = \frac{1}{\alpha+\beta-2} < 1/2$.

For any $1/2 \leq \beta \leq 2$ and any $1 \leq \alpha \leq 2$, \eqref{incidencebound} is sharp in the sense that, for any $\delta>0$, there exist $P$ and $\mathbb{T}$ with both sides of \eqref{incidencebound} equal up to an absolute constant, without the $C_{\epsilon} \delta^{-\epsilon}$ factor, and with $K_{\alpha, \mathbb{T}, \delta} \sim K_{\beta,P,\delta} \sim 1$. Similarly, if $0 \leq \beta \leq 1/2$ and $1+\beta \leq \alpha$, then \eqref{incidencebound} is sharp.

\end{theorem}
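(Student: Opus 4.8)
The plan is to prove Theorem~\ref{theoremincidence} (both the positive results \eqref{incidencebound}, \eqref{incidencebound17} and the sharpness claims) by the same high-low induction skeleton used for Theorem~\ref{theoremincidencecircleL2} and Theorem~\ref{theoremincidence2}, with the only new ingredient being that in the high case we invoke the fractal Strichartz estimate of Theorem~\ref{fractalstrichartz} in place of a plain $L^2$ bound. First I would dualise exactly as in the proof of Theorem~\ref{theoremincidence2}: apply the maps $\Phi,\Psi$ so that $P$ becomes a $(\delta,\beta,K_{\beta,P,\delta})$-set of $\delta$-balls in $B_3(0,1)$ and $\mathbb{T}$ a $(\delta,\alpha,K_{\alpha,\mathbb{T},\delta})$-set of $\delta$-neighbourhoods of light planes, so that $I(P,\mathbb{T})\lesssim \delta^{-3}\int fg$ with $f=\sum_{B\in P}\chi_B$, $g=\sum_{T\in\mathbb{T}}\chi_T$. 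Then I would split $g=g_h+g_l$ via the same bump functions $\phi,\psi$ at frequency scale $\delta^{-1}$ and write $I(P,\mathbb{T})\lesssim \delta^{-3}\int|fg_h|+\delta^{-3}\int|fg_l|$. As before, one should first record (by the trivial bound $I\le|P||\mathbb{T}|$) that the first inequality in \eqref{incidencebound} implies \eqref{incidencebound17} when $\alpha+\beta>4$, so that only the ``main'' bound needs to be tracked through the induction.

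In the high case, apply Cauchy--Schwarz in the form $\int|fg_h|\le \|f\|_{L^2(\nu)}$-type estimate: write $\int|fg_h|\lesssim (\int f^2)^{1/2}(\int|g\ast\widecheck\phi|^2)^{1/2}$? — no; more precisely, since $f=\sum_{B\in P}\chi_B$ is roughly $\chi_{\cup B}$ weighted by $K_{\beta,P,\delta}$, I would instead bound $\int|fg_h|\lesssim K_{\beta,P,\delta}^{1/2}|P|^{1/2}\delta^{3/2}\big(\int|g\ast\widecheck\phi|^2\,d\mu\big)^{1/2}$, where $\mu$ is (a normalised version of) the measure $\sum_{B\in P}\delta^{-3}\chi_B$, which has $c_{\beta}(\mu)\lesssim K_{\beta,P,\delta}$. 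Now $g\ast\widecheck\phi$ has Fourier support in an $O(\delta^{\epsilon^2-1})$-neighbourhood of the light cone, i.e. in $\mathcal N_1(R\Gamma)$ with $R\sim\delta^{-1-\epsilon^2}$, so Theorem~\ref{fractalstrichartz} applies with $\alpha$ there equal to $\beta$ here: $\|g\ast\widecheck\phi\|_{L^2(\mu)}\lesssim c_\beta(\mu)^{1/2}R^{s}\|g\ast\widecheck\phi\|_{L^2(\mathbb{R}^3)}$ for any $s>(2-\beta(\beta))/2$, and $\beta(\cdot)$ is the piecewise function in Theorem~\ref{fractalstrichartz} — this is exactly what produces the four cases $0\le\beta\le 1/2$, $1/2\le\beta\le1$, $1\le\beta\le2$, $\beta\ge2$ in \eqref{incidencebound}. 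Combining with Plancherel and the $(\delta,\alpha)$-condition, $\|g\ast\widecheck\phi\|_2\lesssim \delta^{-O(\epsilon^2)}\|g\|_2\lesssim \delta^{-O(\epsilon^2)}K_{\alpha,\mathbb{T},\delta}^{1/2}(\delta|\mathbb{T}|)^{1/2}$, gives the claimed high-case bound after collecting powers of $\delta$. The fourth case ($\beta\ge2$) needs $s>(2-(\beta-1))/2=(3-\beta)/2$, which for $\beta\le 4-\alpha\le 3$ is nonnegative, and one checks the resulting exponent is $\le -1$, matching the $\delta^{-1}$ in the last line of \eqref{incidencebound}.

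The low case is essentially identical to the low cases in the proofs of Theorems~\ref{theoremincidencecircleL2} and \ref{theoremincidence2}: set $S=\delta^{-\epsilon^2-\epsilon^4}$, use $\|\chi_T\ast\widecheck\psi\|_\infty\lesssim \delta^{-O(\epsilon^4)}S^{-1}$ and the negligibility of $\widecheck\psi$ outside $B_3(0,\delta S)$ to get $|\chi_T\ast\widecheck\psi|\lesssim \delta^{-O(\epsilon^4)}S^{-1}\chi_{T_S}$, hence $I(P,\mathbb{T})\lesssim S^{-1}\delta^{-O(\epsilon^4)}I(P_S,\mathbb{T}_S)$ at scale $\delta S$. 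Feeding in the inductive bound at scale $\delta S$ and the scaling relations $K_{\alpha,\mathbb{T}_S,\delta S}\lesssim S^\alpha K_{\alpha,\mathbb{T},\delta}$, $K_{\beta,P_S,\delta S}\lesssim S^\beta K_{\beta,P,\delta}$, one finds the exponent of $S$ is non-positive exactly in the stated ranges (for the main bound, in each of the four $\beta$-subranges the exponent of $S$ works out to $\le 0$ precisely when $\alpha+\beta\le 4$ together with the subrange constraint; for \eqref{incidencebound17} it is non-positive when $\alpha+\beta>4$ with $\lambda=\tfrac1{\alpha+\beta-2}$), so the induction closes. The base case $\delta\sim 1$ is the trivial bound.

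For the sharpness statements I would exhibit, for given $\delta$ and parameters in the stated ranges, explicit extremal configurations. For $1/2\le\beta\le 2$, $1\le\alpha\le 2$, the natural candidate is a ``bush''/plane-slab example dual to the fractal Strichartz extremisers — concretely, take $P$ to be $\sim\delta^{-\beta}$ many $\delta$-balls distributed as a $\beta$-dimensional $\delta$-discretised set inside a fixed $O(\delta^{\beta(\beta)})$-scale arrangement near the cone (mirroring Erdoğan's sharp examples for Theorem~\ref{fractalstrichartz}) and $\mathbb{T}$ to be $\sim\delta^{-\alpha}$ light-plane slabs forming a $(\delta,\alpha)$-set whose common intersection pattern with those balls saturates the $L^2$ estimate; then both sides of \eqref{incidencebound} are comparable up to constants with $K_{\alpha,\mathbb{T},\delta}\sim K_{\beta,P,\delta}\sim 1$. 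For $0\le\beta\le 1/2$ with $1+\beta\le\alpha$, the relevant extremiser is a lower-dimensional one (a line of balls, i.e.\ $\beta$ concentrated on a $1$-parameter family) together with a correspondingly concentrated family of slabs, again chosen so that the trivial incidence count matches \eqref{incidencebound}. In both cases the verification is a direct computation of $I(P,\mathbb{T})$, $|P|$, $|\mathbb{T}|$ and the two $K$ constants against the relevant line of \eqref{incidencebound}.

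The main obstacle I anticipate is the bookkeeping in the high case: correctly packaging $f$ as a fractal measure $\mu$ with the right dimensional normalisation so that $c_\beta(\mu)\lesssim K_{\beta,P,\delta}$, and then tracking the powers of $R\sim\delta^{-1}$ coming from $R^s$ with $s=(2-\beta(\beta))/2+\epsilon$ through each of the four $\beta$-subranges so that the final $\delta$-exponents match \eqref{incidencebound} exactly (the transitions at $\beta=1/2,1,2$ are where sign errors are easiest to make). A secondary point of care is ensuring, in the low case, that the $S$-exponent is non-positive throughout each subrange — in particular that the subrange endpoints $\beta=1/2,1,2$ remain consistent between consecutive lines of \eqref{incidencebound}, which forces the fractal Strichartz exponent $\beta(\cdot)$ to be continuous (it is) and the resulting bounds to agree at those endpoints.
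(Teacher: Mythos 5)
Your overall skeleton for the positive bounds (dualising to $\delta$-balls and light-plane slabs, the high-low splitting, Cauchy--Schwarz against the renormalised measure $\mu=\delta^{\beta-3}f$ with $c_\beta(\mu)\lesssim K_{\beta,P,\delta}$, Theorem~\ref{fractalstrichartz} in the high case, and the $S$-rescaling induction in the low case) is the same as the paper's. But there is a genuine gap in your high case: you bound $\left\lVert g\ast\widecheck{\phi}\right\rVert_2\lesssim\delta^{-O(\epsilon^2)}\|g\|_2$ and then claim $\|g\|_2^2\lesssim K_{\alpha,\mathbb{T},\delta}\,\delta\,|\mathbb{T}|$ ``by the $(\delta,\alpha)$-condition''. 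That last inequality is false in general: take $\mathbb{T}$ to be essentially all light-plane slabs ($\alpha=2$, $K_{\alpha,\mathbb{T},\delta}\sim1$, $|\mathbb{T}|\sim\delta^{-2}$); then $g\sim\delta^{-1}$ on a set of measure $\sim1$, so $\|g\|_2^2\sim\delta^{-2}$ while $K_{\alpha,\mathbb{T},\delta}\,\delta\,|\mathbb{T}|\sim\delta^{-1}$. The point is that the bulk of $\|g\|_2^2$ lives at low frequencies; you must keep the multiplier $\phi$ and exploit that, at high frequencies, the Fourier supports of the direction-sector pieces $g_\theta=\sum_{T\in\mathbb{T}_\theta}\chi_T$ are essentially disjoint, so that $\int|\widehat{g}|^2|\phi|^2\lesssim\sum_\theta\|g_\theta\|_2^2\lesssim\delta\,K_{\alpha,\mathbb{T},\delta}|\mathbb{T}|$ (each parallel family has overlap $\lesssim K_{\alpha,\mathbb{T},\delta}$). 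This is exactly the step \eqref{l2decoupling} in the paper, and without it your high-case exponents do not close.

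The sharpness claims are also not proved: you assert the existence of configurations ``dual to the fractal Strichartz extremisers'' and say the verification is a direct computation, but no construction is actually given, and the parameters you suggest are not even consistent with the geometry. In particular, for $1\leq\beta\leq2$ a $(\delta,\beta,O(1))$-set $P$ saturating \eqref{incidencebound} must live inside a $1\times\delta^{1/2}\times\delta$ plank tangent to the cone, and inside such a plank one can only fit $\lesssim\delta^{-\frac12-\frac\beta2}$ balls (not $\delta^{-\beta}$); the paper's example takes $|P|\sim\delta^{-\frac12-\frac\beta2}$ (lines of balls along light rays, translated $\delta^{(1-\beta)/2}$ times in the medium direction) and $\mathbb{T}$ the $\sim\delta^{-1/2}$ slabs through the plank with normals within $\delta^{1/2}$ of the plank normal, where the requirement $K_{\alpha,\mathbb{T},\delta}\sim1$ is precisely what forces $\alpha\geq1$. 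Likewise, in the range $0\leq\beta\leq1/2$, $\alpha\geq1+\beta$, the paper does not build a new example but reduces to the sharpness of the planar bound of Fu--Ren via Corollary~\ref{incidencecorollary}; your ``line of balls plus concentrated slabs'' sketch neither specifies this reduction nor verifies the $K$-constants, so this part of the statement remains unestablished in your proposal.
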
 
\begin{proof} As mentioned similarly at the start of the proof of Theorem~\ref{theoremincidence2}, we only need to prove \eqref{incidencebound} and \eqref{incidencebound17} when $P$ is a $\left( \delta, \beta, K_{\beta,P,\delta}\right)$-set of $\delta$-balls in $B_3(0,1)$, and $\mathbb{T}$ is a $\left( \delta, \alpha, K_{\alpha, \mathbb{T}, \delta}\right)$-set of $\delta$-neighbourhoods of light planes in $\mathbb{R}^3$, so we will assume we are in this setup. Moreover, by a similar argument, we only need to prove the sharpness statements for this setup, too. Let $f = \sum_{B \in P} \chi_B$ and let $g = \sum_{T \in \mathbb{T}} \chi_T$, where $\chi_T$ is a smooth bump function equal to 1 on $T$ and vanishing outside $2T$. Then 
\[ I(P, \mathbb{T}) \lesssim \delta^{-3} \int f g. \]
We do the same high-low decomposition as in the proof of Theorem~\ref{theoremincidence2}, and we get, analogously to~\eqref{highlow2}, 
\begin{equation} \label{highlow3} I(P, \mathbb{T}) \lesssim \delta^{-3}  \int \left\lvert f g_h \right\rvert + \delta^{-3} \int \left\lvert f g_l \right\rvert. \end{equation}
If the high part dominates \eqref{highlow3}, then by Cauchy-Schwarz,
\begin{equation} \label{twofactors}  I(P, \mathbb{T}) \lesssim \delta^{-3} \left(\delta^3 |P| \right)^{1/2} \delta^{\frac{3-\beta}{2} } \left( \int  \left\lvert g \ast \widecheck{\phi}\right\rvert^2 \, d\mu \right)^{1/2}, \end{equation}
where
\[ \mu = \frac{f}{\delta^{3-\beta}} =\frac{1}{\delta^{3-\beta} }  \sum_{B \in P} \chi_B, \]
which is a $\beta$-dimensional measure ($c_{\beta}(\mu) \lesssim K_{\beta,P,\delta}$). The function $g \ast \widecheck{\phi}$ is the inverse Fourier transform of a function supported in $\mathcal{N}_1(\delta^{-1}\Gamma)$. By Theorem~\ref{fractalstrichartz} (followed by Plancherel),
\begin{equation} \label{fractalstrichartz2} \int \left\lvert g \ast \widecheck{\phi}\right\rvert^2 \, d\mu \lesssim \delta^{-O(\epsilon^2)} \delta^{\max\left\{ \min\left\{ \beta, 1/2 \right\}, \beta/2 , \beta-1 \right\}-2} K_{\beta,P,\delta} \int \left\lvert \widehat{g} \right\rvert^2 \left\lvert \phi \right\rvert^2. \end{equation}
Let $\Theta$ be a maximal $\delta$-separated subset of $[0, 2\pi]$. Let $g_{\theta} = \sum_{T \in \mathbb{T}_{\theta}} \chi_T$, where $\{\mathbb{T}_{\theta}\}_{\theta \in \Theta}$ is a partition of $\mathbb{T}$ such that each $T \in \mathbb{T}_{\theta}$ has normal $\gamma(\theta_T)$ such that $\theta_T$ is within $\delta$ of $\theta$, so that $g = \sum_{\theta \in \Theta} g_{\theta}$. By frequency disjointness and then Plancherel,
\begin{equation} \label{l2decoupling} \int \left\lvert \widehat{g} \right\rvert^2 \left\lvert \phi \right\rvert^2 \lesssim  \sum_{\theta \in \Theta} \| g_{\theta} \|_2^2 \lesssim \delta  K_{\alpha,\mathbb{T},\delta}|\mathbb{T}|.  \end{equation}
Substituting \eqref{l2decoupling} into \eqref{fractalstrichartz2} gives
\[\int \left\lvert g \ast \widecheck{\phi}\right\rvert^2 \, d\mu \lesssim \delta^{-O(\epsilon^2)} \delta^{\max\left\{ \min\left\{ \beta, 1/2 \right\}, \beta/2 , \beta-1 \right\}-1} K_{\alpha,\mathbb{T},\delta} K_{\beta,P,\delta}|\mathbb{T} |. \]
If we use this to bound the second factor of \eqref{twofactors}, we get 
\begin{multline*} I(P, \mathbb{T}) \lesssim \delta^{-O(\epsilon^2)} \delta^{\max\left\{ \min\left\{ -\frac{1}{2}, - \frac{\beta}{2} - \frac{1}{4} \right\}, -\frac{\beta}{4} - \frac{1}{2}, -1 \right\}} |P|^{1/2} |\mathbb{T}|^{1/2} K_{\alpha,\mathbb{T},\delta}^{1/2} K_{\beta,P,\delta}^{1/2} \\ = \delta^{-O(\epsilon^2)} |P|^{1/2} |\mathbb{T}|^{1/2}  K_{\alpha,\mathbb{T},\delta}^{1/2} K_{\beta,P,\delta}^{1/2}\times \begin{cases} \delta^{-\frac{1}{2}} & 0 \leq \beta \leq 1/2 \\
\delta^{-\frac{1}{4} - \frac{\beta}{2}} & 1/2 \leq \beta \leq 1 \\
\delta^{-\frac{1}{2} - \frac{\beta}{4}}  & 1 \leq \beta \leq 2 \\
\delta^{-1}  & 2 \leq \beta \leq 3. \end{cases} \end{multline*}
This proves \eqref{incidencebound} if the high part dominates \eqref{highlow3}. 

If the low part dominates \eqref{highlow3}, let $S = \delta^{-\epsilon^2-\epsilon^4}$. Then
\begin{multline} \label{smoothing} I(P, \mathbb{T}) \lesssim \delta^{-3} \int \left\lvert  f g_{l} \right\rvert \lesssim \delta^{-3-O(\epsilon^4)} \int\left( \sum_{B \in P } \chi_B \right) \left( S^{-1}  \chi_{T_{S}} \right) \\\lesssim  S^{-1} \delta^{-O(\epsilon^4)} I\left(P_S, \mathbb{T}_S\right), \end{multline}
apart from a negligible error term, where the balls in $P_S$ are the balls from $P$ scaled by $S$, $\mathbb{T}_S$ consists of $\delta S$ neighbourhoods of the planes from $\mathbb{T}$, and $\chi_{T_S}$ now just means the indicator function of $T_S$. If $\alpha+\beta \leq 4$, and we assume by induction on $\delta$ that the bound holds at scale $\delta S$ and apply it to the above, we get 
\begin{multline*} I(P, \mathbb{T}) \lesssim \\ \delta^{-O(\epsilon^4)}  S^{-1} K_{\alpha, \mathbb{T}_S, \delta S}^{1/2} K_{\beta,P_S,\delta S}^{1/2}\left( \delta S \right)^{\max\left\{ \min\left\{ -\frac{1}{2}, - \frac{\beta}{2} - \frac{1}{4} \right\}, -\frac{\beta}{4} - \frac{1}{2}, -1 \right\}-\epsilon} |P_S|^{1/2} |\mathbb{T}_S|^{1/2} \\
\lesssim \delta^{-O(\epsilon^4)} S^{-1} S^{(\alpha+\beta)/2} K_{\alpha, \mathbb{T}, \delta}^{1/2} K_{\beta,P,\delta}^{1/2}\left( \delta S \right)^{\max\left\{ \min\left\{ -\frac{1}{2}, - \frac{\beta}{2} - \frac{1}{4} \right\}, -\frac{\beta}{4} - \frac{1}{2}, -1 \right\}-\epsilon} \\
\times |P|^{1/2} |\mathbb{T}|^{1/2}. \end{multline*}
The non-infinitesimal exponent of $S$ is 
\begin{multline*} \max\left\{ \min\left\{ \frac{\alpha+\beta- 3 }{2}, \frac{\alpha}{2} - \frac{5}{4} \right\}, \frac{\alpha}{2} + \frac{\beta}{4} - \frac{3}{2} , \frac{ \alpha+\beta}{2} - 2 \right\} \\
 = \begin{cases} \frac{\alpha+\beta-3}{2} & 0 \leq \beta \leq 1/2 \\
 \frac{\alpha}{2} - \frac{5}{4} & 1/2 \leq \beta \leq 1 \\
 \frac{\alpha}{2} + \frac{\beta}{4} - \frac{3}{2} & 1 \leq \beta \leq 2 \\
 \frac{\alpha+\beta}{2} -2  & 2 \leq \beta \leq 3. \end{cases} \end{multline*}
It is straightforward to check that for $\beta \leq 2$, the above exponent is non-positive (recall that $\alpha \leq 2$). In the fourth case, the exponent is also non-positive because we have assumed that $\alpha + \beta \leq 4$. This proves the theorem if $\alpha+\beta\leq 4$ and if the low part dominates \eqref{highlow3}.  If $\alpha+\beta > 4$, then \eqref{smoothing} becomes
\begin{multline*} I(P, \mathbb{T}) \lesssim \delta^{-O(\epsilon^4)} S^{-1} K_{\alpha, \mathbb{T}_S, \delta S}^{\lambda} K_{\beta,P_S,\delta S}^{\lambda}\left( \delta S \right)^{-2\lambda-\epsilon} |P_S|^{1-\lambda} |\mathbb{T}_S|^{1-\lambda} \\
\lesssim  \delta^{-O(\epsilon^4)}S^{\lambda(\alpha+\beta-2)-1} K_{\alpha, \mathbb{T}, \delta}^{\lambda} K_{\beta,P,\delta}^{\lambda} \delta^{-2\lambda-\epsilon} |P|^{1/2} |\mathbb{T}|^{1/2}, \end{multline*}
The non-infinitesimal exponent of $S$ is zero by the definition of $\lambda$, so this finishes the proof in the case $\alpha + \beta > 4$.

It remains to prove the sharpness statements. Fix a $1 \times \delta^{1/2} \times \delta$ plank $R$ tangent to the light cone, passing through the origin.

Assume first that $1/2 \leq \beta \leq 1$ and $1 \leq \alpha \leq 2$. Let $P$ be a set of $\delta$-balls in $R$ with $K_{\beta,P,\delta} \lesssim 1$ and $|P| \sim \delta^{-\beta}$. Let $\mathbb{T}$ be the set of $\delta$-neighbourhoods of light planes which intersect $R$ and such that the normal to $T \in \mathbb{T}$ makes an angle $\leq \delta^{1/2}$ with the normal to $R$. In parameter space, $\mathbb{T}$ corresponds to a set of $\delta$-discs covering a $\delta^{1/2} \times \delta$-interval, which means that $|\mathbb{T}| \sim \delta^{-1/2}$ and $K_{\alpha,\mathbb{T},\delta} \lesssim 1$ (this is where we use the assumption that $\alpha \geq 1$).  Each $\delta$-neighbourhood of a light plane in $\mathbb{T}$ essentially covers the entire plank $R$, and therefore intersects every ball in $P$. Hence, the left-hand side of \eqref{incidencebound} is
\[ I(P, \mathbb{T}) \sim \delta^{-\beta - \frac{1}{2}}, \]
which is comparable to the right-hand side of \eqref{incidencebound}. 

Now suppose that $1 \leq \beta \leq 2$. Let $P$ be a set of $\delta$-balls in $R$ with $|P| \sim \delta^{-\frac{1}{2}- \frac{\beta}{2}}$ and $K_{\beta,P,\delta} \sim 1$. Such a set can be constructed by taking a line of $\delta^{-1}$ many $\delta$-balls along a light ray in $R$, translating it by a distance $\delta^{\beta/2}$ along the medium direction of the plank a total of $\delta^{(1-\beta)/2}$ many times, and then taking a union of these translates.  We again let $\mathbb{T}$ be the set of $\delta$-neighbourhoods of light planes which intersect $R$ and such that the normal to $T \in \mathbb{T}$ makes an angle $\leq \delta^{1/2}$ with the normal to $R$. The left-hand side of \eqref{incidencebound} is 
\[ I(P, \mathbb{T}) \sim \delta^{-1-\frac{\beta}{2} }, \]
which matches the right-hand side of \eqref{incidencebound}. 

Finally, the sharpness in the case $0 \leq \beta \leq 1/2$ and $1+\beta \leq \alpha$ follows from sharpness of the case $0 \leq \beta \leq 1/2$ and $1+\beta \leq \alpha$ in Corollary~\ref{incidencecorollary} below; since we can obtain this case in Corollary~\ref{incidencecorollary} (at least if $K_{\alpha, \mathbb{T}, \delta} \sim K_{\beta,P,\delta} \sim 1$)  by reducing to the case where every $T \in \mathbb{T}$ intersects some $B \in P$, applying \eqref{incidencebound}, and then using the bound $\left\lvert \mathbb{T} \right\rvert \lesssim  \left\lvert P \right\rvert \delta^{-1}$, which follows from pigeonholing and using that every $B \in P$ intersects $\lesssim \delta^{-1}$ many $T \in \mathbb{T}$. \end{proof}

\begin{corollary} \label{incidencecorollary}  Let $\alpha \in [0, 2]$, $\beta \in [0,3]$, and $\delta \in (0,1)$. Let $P$ be a $\left( \delta, \beta, K_{\beta,P,\delta}\right)$-set of $\delta$-neighbourhoods of sine waves 
\[ \left\{  \left(\theta, a \cos \theta + b \sin \theta +c\right):  \theta \in [0, 2\pi) \right\}, \]
where $(a,b,c) \in B_3(0,1)$.  Let $\mathbb{T}$ be a $\left( \delta, \alpha, K_{\alpha, \mathbb{T}, \delta}\right)$-set of $\delta$-discs in the plane. Then for any $\epsilon >0$, 
\begin{equation} \label{incidenceboundcorollary} I(P, \mathbb{T}) \leq C_{\epsilon} \delta^{-\epsilon} K_{\alpha, \mathbb{T}, \delta } K_{\beta, P } \begin{cases} \delta^{-\frac{1}{2} - \frac{\beta}{2} - \frac{\alpha}{2}} & 0 \leq \beta \leq 1/2 \text{ and } \alpha < 1 + \beta \\
\delta^{-\frac{1}{4} - \beta - \frac{\alpha}{2}} & 1/2 \leq \beta \leq 1  \text{ and } \alpha \leq 2-\beta \\
\delta^{-\frac{3}{4} - \frac{3\beta}{4} - \frac{\alpha}{4}} & 1/2 \leq \beta \leq 1  \text{ and } 2- \beta < \alpha < 1+\beta \\
\delta^{-1-\beta} & 0 \leq \beta \leq 1 \text{ and } \alpha \geq 1+\beta \\
\delta^{-\frac{1}{2} - \frac{3\beta}{4} - \frac{\alpha}{2}}  & 1 \leq \beta \leq 2 \text{ and } \alpha \leq 1\\
\delta^{-\frac{3}{4} - \frac{3\beta}{4} - \frac{\alpha}{4} }  & 1 \leq \beta \leq 2\text{ and } 1 < \alpha \leq \frac{7-\beta}{3} \\
\delta^{-\frac{3}{4} - \frac{3\beta}{4} - \frac{\alpha}{4} }  & 2 \leq \beta \leq 5/2 \text{ and } \beta-1 < \alpha \leq \frac{7-\beta}{3} \\
\delta^{-\frac{\alpha}{2} - \frac{\beta}{2} -1 }  & 2 \leq \beta \leq 5/2 \text{ and } \alpha \leq \beta-1 \\
\delta^{-\frac{\alpha}{2} - \frac{\beta}{2} -1 } & 5/2 \leq \beta \leq \min\{ 3, 4-\alpha \} \\
\delta^{1- \alpha-\beta} & \alpha+\beta > 4 \text{ or } 3\alpha+\beta > 7. \end{cases}  \end{equation}
If $0 \leq \beta \leq 1$ and $1+\beta \leq \alpha$, or if $1 \leq \alpha \leq 3/2 $ and $1+\alpha \leq \beta \leq 4-\alpha$, then \eqref{incidenceboundcorollary} is sharp in the sense that there exist $P$ and $\mathbb{T}$ with both sides equal up to a constant. 
\end{corollary}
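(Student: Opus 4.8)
The plan is to deduce Corollary~\ref{incidencecorollary} from Theorem~\ref{theoremincidence} by removing the hypothesis that every disc $T \in \mathbb{T}$ meets some sine wave of $P$ (and symmetrically), which is implicit in the way Theorem~\ref{theoremincidence} is stated and used. First I would pass to the dual formulation: by the duality $I(P,\mathbb{T}) \lesssim I(\Phi(P),\Psi(\mathbb{T}))$ already invoked in the proofs of Theorem~\ref{theoremincidence2} and Theorem~\ref{theoremincidence}, we may assume $P$ is a $(\delta,\beta,K_{\beta,P,\delta})$-set of $\delta$-balls in $B_3(0,1)$ and $\mathbb{T}$ a $(\delta,\alpha,K_{\alpha,\mathbb{T},\delta})$-set of $\delta$-neighbourhoods of light planes. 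In this setup, pigeonhole to reduce to a subfamily $\mathbb{T}' \subseteq \mathbb{T}$ such that every $T \in \mathbb{T}'$ meets some ball of $P$, at the cost of nothing in the incidence count (incidences only come from such $T$). The point is then that each ball $B \in P$ meets $\lesssim \delta^{-1}$ members of $\mathbb{T}'$ (a light plane through a fixed $\delta$-ball is determined by its normal direction $\theta_T \in [0,2\pi)$ up to scale $\delta$, so there are $\lesssim \delta^{-1}$ choices), hence
\[ |\mathbb{T}'| \lesssim \delta^{-1} |P|. \]
Symmetrically, reducing to $P' \subseteq P$ of balls meeting some $T$, each $T$ meets $\lesssim \delta^{-1}$ balls of $P'$ only after a further dyadic pigeonholing on how many balls a plane meets; more usefully, if we instead bound via $|P'| \lesssim \delta^{-1}|\mathbb{T}|$ we get the complementary inequality.

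The second step is bookkeeping: plug these two a priori bounds, $|\mathbb{T}| \lesssim \delta^{-1}|P|$ and $|P| \lesssim \delta^{-1}|\mathbb{T}|$, into the estimates \eqref{incidencebound} and \eqref{incidencebound17} of Theorem~\ref{theoremincidence} (applied to $P', \mathbb{T}'$, whose $K$-constants are no larger than those of $P,\mathbb{T}$ and whose cardinalities are $\le |P|,|\mathbb{T}|$). For instance, in the regime $1 \le \beta \le 2$, Theorem~\ref{theoremincidence} gives $I \lesssim \delta^{-\epsilon} K_\alpha^{1/2} K_\beta^{1/2} |P|^{1/2}|\mathbb{T}|^{1/2} \delta^{-1/2-\beta/4}$; substituting $|P|^{1/2} \lesssim (\delta^{-1}|\mathbb{T}|)^{1/2}$ yields a bound of the form $\delta^{-\epsilon}K_\alpha K_\beta \delta^{-3/4-3\beta/4-\alpha/4}|\mathbb{T}|^{\cdots}$ after also using $|\mathbb{T}| \le K_\alpha \delta^{-\alpha}$ and $|P| \le K_\beta \delta^{-\beta}$ (the a priori bounds from Section~\ref{notation}), which matches the sixth line of \eqref{incidenceboundcorollary}. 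The various cases in \eqref{incidenceboundcorollary} arise from choosing, in each $(\alpha,\beta)$-range, which of the two substitutions $|P| \lesssim \delta^{-1}|\mathbb{T}|$ versus $|\mathbb{T}| \lesssim \delta^{-1}|P|$ to make (i.e.\ which of $|P|,|\mathbb{T}|$ is the ``bottleneck''), together with the straightforward bounds $|P| \le K_{\beta,P,\delta}\delta^{-\beta}$, $|\mathbb{T}| \le K_{\alpha,\mathbb{T},\delta}\delta^{-\alpha}$; in the regime $3\alpha+\beta>7$ or $\alpha+\beta>4$ one just uses the trivial bound $I(P,\mathbb{T}) \le |P||\mathbb{T}| \le K_\alpha K_\beta \delta^{-\alpha-\beta}$ combined with one of the a priori cardinality bounds, giving the last line $\delta^{1-\alpha-\beta}$. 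I would organise this as a finite case check, matching the exponent $-\tfrac12 - (\text{contribution from Theorem~\ref{theoremincidence}}) \pm \tfrac{(\text{factors of }\delta^{-1})}{2}$ against the displayed exponents.

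For the sharpness statements, I would exhibit the extremal configurations directly in dual form, mimicking the constructions at the end of the proof of Theorem~\ref{theoremincidence}. Fix a $1 \times \delta^{1/2} \times \delta$ plank $R$ tangent to the light cone through the origin. In the range $0 \le \beta \le 1$, $\alpha \ge 1+\beta$: take $P$ to be $|P| \sim \delta^{-\beta}$ many $\delta$-balls in $R$ with $K_{\beta,P,\delta} \sim 1$ along a light ray, but now take $\mathbb{T}$ to be \emph{all} light-plane $\delta$-neighbourhoods meeting $R$ with normal within $\delta^{1/2}$ of that of $R$ — a full $\delta^{1/2}\times\delta$ patch in parameter space giving $|\mathbb{T}| \sim \delta^{-1}$ and $K_{\alpha,\mathbb{T},\delta} \sim 1$ (here the hypothesis $\alpha \ge 1+\beta$ is what allows $K_{\alpha,\mathbb{T},\delta}\sim 1$ for a $\delta^{1/2}\times\delta$ set of discs, since such a set is a $(\delta, 1)$-set and hence trivially a $(\delta,\alpha)$-set with constant $\sim 1$ only when $\alpha \ge 1$, and the extra $\beta$ accounts for the balls' spread). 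Since each plane covers essentially all of $R$, $I(P,\mathbb{T}) \sim |P||\mathbb{T}| \sim \delta^{-1-\beta}$, matching the fourth line. In the symmetric range $1 \le \alpha \le 3/2$, $1+\alpha \le \beta \le 4-\alpha$ I would swap the roles of $P$ and $\mathbb{T}$: let $\mathbb{T}$ be a line of light planes through $R$ ($|\mathbb{T}| \sim \delta^{-\alpha}$, but really $\sim\delta^{-1}$ inside the patch, scaled) and $P$ a $(\beta,\delta)$-set of balls filling a sub-plank, the computation being the dual of the previous one.

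The main obstacle I expect is not any single hard estimate but the case analysis: verifying that in each of the ten listed regimes the correct choice of a priori substitution reproduces exactly the stated exponent, and — more delicately — checking that the $K$-constants in the sharpness examples genuinely are $\sim 1$ in the claimed ranges, since a $\delta^{1/2}\times\delta$ slab of discs is only an $(\delta,\alpha,O(1))$-set for $\alpha$ in a restricted range, and the constraints like $\alpha \ge 1+\beta$ or $\beta \ge 1+\alpha$ are precisely what is needed for this. I would handle this by first recording the lemma that a box of dimensions $\delta^{a_1}\times\cdots\times\delta^{a_n}$ (with $a_i \le 1$) filled with $\delta$-balls is a $(\delta,\gamma,O(1))$-set iff $\gamma \ge \max_i(\text{partial sums of the }a_i\text{ in decreasing order})/(\text{corresponding number of coordinates})$ — the standard ``worst sub-box'' criterion — and then the sharpness constraints fall out mechanically.
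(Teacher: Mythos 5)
The central gap is in the upper-bound half: you try to obtain all ten cases of \eqref{incidenceboundcorollary} from Theorem~\ref{theoremincidence} together with cardinality substitutions, but the paper needs \emph{both} appendix theorems, and your substitutions do not reproduce the lines that come from the other one. Writing $K_\alpha = K_{\alpha,\mathbb{T},\delta}$, $K_\beta = K_{\beta,P,\delta}$: the exponent pattern $\delta^{-\frac34-\frac{3\beta}{4}-\frac{\alpha}{4}}$ in lines 3, 6, 7 comes from the $L^4$ small-cap bound \eqref{incidencebound7} of Theorem~\ref{theoremincidence2}, i.e.\ $I \lesssim K_\alpha^{3/4}K_\beta^{1/4}|P|^{3/4}|\mathbb{T}|^{1/4}\delta^{-3/4}$, after inserting $|P|\le K_\beta\delta^{-\beta}$ and $|\mathbb{T}|\le K_\alpha\delta^{-\alpha}$; it cannot be extracted from the $\tfrac12$--$\tfrac12$ bound \eqref{incidencebound}. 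Checking your sample computation for line 6 ($1\le\beta\le2$, $\alpha>1$): direct substitution into \eqref{incidencebound} gives $K_\alpha K_\beta\,\delta^{-\frac12-\frac{3\beta}{4}-\frac{\alpha}{2}}$ (line 5, strictly weaker for $\alpha>1$), substituting $|P|\lesssim\delta^{-1}|\mathbb{T}|$ gives $K_\alpha^{3/2}K_\beta^{1/2}\delta^{-1-\frac{\beta}{4}-\alpha}$, and substituting $|\mathbb{T}|\lesssim K_\alpha\delta^{-1}|P|$ gives $K_\alpha K_\beta^{3/2}\delta^{-1-\frac{5\beta}{4}}$; none of these matches $\delta^{-\frac34-\frac{3\beta}{4}-\frac{\alpha}{4}}$ anywhere in the stated range. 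Similarly, the last line does \emph{not} follow from the trivial bound: $|P||\mathbb{T}|\le K_\alpha K_\beta\delta^{-\alpha-\beta}$ is a full factor $\delta^{-1}$ weaker than $\delta^{1-\alpha-\beta}$, and no cardinality bound can repair a product bound already computed; the paper gets this line from the interpolated estimates \eqref{incidencebound17} and \eqref{incidencebound8}, whose $\lambda$-exponents are calibrated precisely so that the a priori substitutions yield $\delta^{1-\alpha-\beta}$. Finally, your symmetric a priori bound $|P'|\lesssim\delta^{-1}|\mathbb{T}|$ is false in this setting: the sine waves through a fixed $\delta$-disc correspond, in parameter space, to the $\delta$-balls in a $1\times1\times O(\delta)$ slab, so a single disc can be incident to $\sim K_\beta\delta^{-2}$ curves. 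Only the other direction (each curve meets $\lesssim K_\alpha\delta^{-1}$ discs, with the $K_\alpha$ factor you dropped) is available, and that is exactly the counting argument the paper uses — but only for line 4.

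The sharpness discussion also has a concrete error. In the range $0\le\beta\le1$, $\alpha\ge1+\beta$, your $\mathbb{T}$ is a $\delta^{1/2}\times\delta$ patch in parameter space, which contains $\sim\delta^{-1/2}$ (not $\delta^{-1}$) many $\delta$-discs; the resulting incidence count is $\sim\delta^{-\frac12-\beta}$, which is sharp for \eqref{incidencebound} (this is the example inside the proof of Theorem~\ref{theoremincidence}) but falls short of the line-4 bound $\delta^{-1-\beta}$ by $\delta^{1/2}$. A correct example must put $\sim\delta^{-1}$ discs along each of $\sim\delta^{-\beta}$ curves, the hypothesis $\alpha\ge1+\beta$ being what makes the resulting $\delta^{-1-\beta}$ discs a $(\delta,\alpha,O(1))$-set; the paper handles this by citing the sharp Fu--Ren configuration. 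For the second range the paper does not ``swap roles'': it lifts Fu--Ren's planar Construction~1 into $\mathbb{R}^3$ by repeating each ball $\delta^{-1}$ times along a light ray inside a $1\times\delta^{1-\gamma}\times\delta$ box (using $\gamma\ge 1/2$, i.e.\ $\beta\ge1+\alpha$), which raises the dimension of $P$ by one and the incidence count by $\delta^{-1}$, matching $\delta^{-\frac{\alpha}{2}-\frac{\beta}{2}-1}$. Your one-line duality swap does not produce this exponent, and the $K$-constant verification you defer is exactly where the constraint $\beta\ge1+\alpha$ enters.
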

\begin{proof} If we apply the upper bounds $|P| \lesssim K_{\beta,P,\delta} \delta^{-\beta}$ and $|\mathbb{T}| \lesssim K_{\alpha, \mathbb{T}, \delta} \delta^{-\alpha}$, then Theorem~\ref{theoremincidence} implies the 1st, 2nd, 5th, 8th and 9th lines of \eqref{incidenceboundcorollary}, and the case $\alpha+\beta>4$ of the 10th line. Theorem~\ref{theoremincidence2} implies the 3rd, 6th and 7th lines, and the $3\alpha+\beta>7$ case of the 10th line. For the remaining 4th line, in the case where $\alpha \geq 1+\beta$, we can obtain the bound $I(P, \mathbb{T}) \lesssim K_{\alpha, \mathbb{T}, \delta} K_{\beta,P,\delta} \delta^{-\beta-1}$ directly by a counting argument (i.e.~without using Theorem~\ref{theoremincidence}); since for each $B \in P$, there are $\lesssim K_{\alpha, \mathbb{T}, \delta} \delta^{-1}$ elements of $\mathbb{T}$ passing through $B$. 

It remains to prove the statements about sharpness. 

When $0 \leq \beta \leq 1$ and $1+\beta \leq \alpha$, then \eqref{incidenceboundcorollary} implies the incidence bound $I(P, \mathbb{T}) \leq C_{\epsilon} \delta^{-\epsilon} \delta^{- 1-\beta}$ for tubes and discs in the plane, which matches \cite[Theorem~1.4]{furen} (in this case), which is sharp in this range as stated in \cite{furen}. 

Suppose that $1 \leq \alpha \leq 3/2$ and $1+\alpha \leq \beta \leq 4-\alpha$. Let $\widetilde{\alpha} = \alpha$ and let $\widetilde{\beta} = \beta - 1$. Let $\widetilde{P}$ and $\widetilde{\mathbb{T}}$ be the sets from the example in \cite[Construction 1]{furen}, but with the $\alpha$ and $\beta$ in \cite{furen} replaced by $\widetilde{\alpha}$ and $\widetilde{\beta}$. Let $\mathbb{T}$ be the set of $\delta$-neighbourhoods $T$ of light planes in $\mathbb{R}^3$ such that the intersection of $T$ with $\mathbb{R}^2 = \mathbb{R}^2 \times \{0\}$ is a $\delta$-tube from $\widetilde{\mathbb{T}}$. In \cite[Subsection~2.1]{furen}, the points in $\widetilde{P}$ are partitioned into rectangles $R_i$ of dimensions $\delta \times \delta^{1-\gamma}$ (and they completely fill such rectangles), where $\gamma = \frac{1-\widetilde{\alpha}+\widetilde{\beta}}{2} \in [1/2,1 ]$, with each $R_i$ contained in the intersection of a ``bundle'' of tubes $\widetilde{\mathbb{T}_i}$ from $\widetilde{\mathbb{T}}$, which all pass through the centre of $R_i$ and have angles within a distance $\delta^{\gamma}$ of each other. Since $\gamma \geq 1/2$ (this is where we use that $\beta \geq 1+\alpha$), the corresponding family of $\delta$-neighbourhoods of light planes from $\mathbb{T}$ must contain a box of dimensions $1 \times \delta^{1-\gamma} \times \delta$. This means that we can construct a $\beta = \left(\widetilde{\beta} +1\right)$-dimensional family of balls $P$ in $\mathbb{R}^3$ by repeating each ball in $\widetilde{P}$ $\delta^{-1}$ many times along a light ray inside the $1 \times \delta^{1-\gamma} \times \delta$ box in which it is contained. This increases the total number of incidences by $\delta^{-1}$ over the planar example (at least those incidences counted in each bundle in \cite[p.~6]{furen}), and the dimension of $P$ is $\widetilde{\beta}+1 = \beta$. The number of incidences in the planar example is $\delta^{ - \frac{\alpha}{2} - \frac{\beta}{2}}$, so this gives an example with $I(P, \mathbb{T}) \gtrsim \delta^{ - \frac{\alpha}{2} - \frac{\beta}{2} - 1}$, which matches the right-hand side of \eqref{incidenceboundcorollary} in this case. \end{proof}

\end{document}